\documentclass[12pt,reqno]{amsart}
\usepackage{amssymb,amsmath,amsthm,amscd, eucal}
\usepackage{color} 
\usepackage{enumerate}
\usepackage{bbm}
\usepackage[a4paper]{geometry}
\usepackage{tikz}
\usetikzlibrary{arrows}
\frenchspacing
\sloppy

\usepackage{amscd}





\DeclareMathOperator{\sign}{sign} 
\DeclareMathOperator{\Ran}{Ran}

\DeclareMathOperator{\Ker}{Ker}

\DeclareMathOperator{\clos}{clos}

\renewcommand\Im{\text{\rm Im}\,}
\renewcommand\Re{\text{\rm Re}\,}


\newcommand{\q}{\quad}

\newcommand{\ov}{\overline}
\newcommand{\z}{\zeta}

\newcommand{\e}{\eqref}
 \newcommand{\1}{\mathbbm{1}}
 




\numberwithin{equation}{section}


\theoremstyle{plain}
\newtheorem{theorem}{\bf Theorem}[section]
\newtheorem{lemma}[theorem]{\bf Lemma}
\newtheorem{proposition}[theorem]{\bf Proposition}

\newtheorem{corollary}[theorem]{\bf Corollary}

\theoremstyle{definition}
\newtheorem{definition}[theorem]{\bf Definition}

\newtheorem{cond}[theorem]{\bf Condition}

\theoremstyle{remark}
\newtheorem*{remark*}{\bf Remark}
\newtheorem{remark}[theorem]{\bf Remark}




\begin{document}

\title{On spectral analysis of self-adjoint Toeplitz operators}

\author{Alexander Sobolev}
\address{Department of Mathematics, 
Gower Street, London WC1E 6BT, U.K.}
\email{a.sobolev@ucl.ac.uk}

\author{Dmitri Yafaev}
\address{Univ  Rennes, CNRS, IRMAR-UMR 6625, F-35000 
    Rennes, France and SPGU, Univ. Nab. 7/9, Saint Petersburg, 199034 Russia}
\email{yafaev@univ-rennes1.fr}


\thanks{Our collaboration has become possible through the hospitality and financial support 
of the Departments of Mathematics  of University College London and of the University of Rennes 1.
The LMS grant is gratefully acknowledged. 
The authors were also supported 
by EPSRC grant EP/J016829/1 (A.S.) and RFBR grant No. 17-01-00668 A (D. Y.).} 

\begin{abstract}
The paper pursues three objectives. 
Firstly, 
we provide an expanded version of spectral analysis 
of self-adjoint Toeplitz operators, initially built by 
M. Rosenblum in the 1960's. We offer some improvements to Rosenblum's approach: 
for instance, our proof of the absolute continuity, relying 
on a weak version of the limiting absorption principle,
is more direct. 

Secondly, we study in detail Toeplitz operators with finite spectral multiplicity. 
In particular, we introduce generalized eigenfunctions and investigate their properties. 

Thirdly, we develop a more detailed   spectral analysis  
for piecewise continuous symbols. This is necessary for construction of
scattering theory for Toeplitz operators with such symbols. 
\end{abstract}

\subjclass[2000]{Primary 47B35; Secondary 47A40}

\keywords{Toeplitz operators,    
spectral decomposition, discontinuous   symbols}

\maketitle

\section{Introduction} 
It is known  \cite{HaWi} that the spectrum $\sigma (T)$ of a 
self-adjoint Toeplitz operator $T =T(\omega)$  
with a real semi-bounded symbol $\omega(\z)$ on the unit cirlce $\mathbb T$, coincides with the interval 
\begin {equation}
\sigma (T)= [\gamma_1 , \gamma_2] \q{\rm where}\q
\gamma_1 = \textup{ess-inf}\;\omega, \; \gamma_2 = \textup{ess-sup}\;\omega,
\label{eq:sig}\end{equation}
and it is absolutely continuous  \cite{Ros1} unless $\omega$ is a constant. 
We note also the papers \cite{ Ismag,Ros3} where the 
multiplicity of the spectrum $\sigma (T)$  was found, 
and \cite{Ros2, Ros3} where a spectral representation of $T$ was constructed.
A presentation of spectral theory for self-adjoint Toeplitz operators can be also found 
in \cite[Chapter 3, Examples and Addenda]{RosRov}. 
For analysis of general Toeplitz operators see, e.g., 
the books \cite{Boet, NK,  NKN, Pe}.

Our ultimate objective is to construct  (in the forthcoming paper \cite{SY}) scattering theory for 
Toeplitz operators $T(\omega)$  with piecewise continuous symbols $\omega$.
The case of   symbols $\omega$ with jump discontinuities seems to be particularly 
interesting because for such symbols 
scattering theory becomes multichannel.   
In the current paper we present spectral analysis of self-adjoint Toeplitz operators, adapted for this application.

Our starting point is M. Rosenblum's papers \cite{Ros1, Ros2, Ros3}. 
Since Rosenblum's presentation is rather condensed and sometimes sketchy, we believe that 
interested specialists would benefit from a more detailed exposition of the relevant results. 
Thus the first
aim of the present paper is to a large extent methodological --  
to provide an expanded and detailed spectral analysis of general self-adjoint 
Toeplitz operators, see Sections 
\ref{toeplitz_forms:sect}, \ref{genT:sect}. Although we mostly follow Rosenblum's construction, 
our proof of the absolute continuity of $T$ in Theorem \ref{ac:thm} 
is more direct compared to \cite{Ros1} or \cite{RosRov}. It relies 
on a weak version of the limiting absorption principle, which is 
established via a straightforward application of Jensen's inequality.    

The second aim of the paper is to study Toeplitz operators with finite spectral multiplicity. 
In this case general results of Sect. \ref{genT:sect} can be made more explicit. 
In particular, we introduce generalized eigenfunctions 
and study their properties. The eigenfunctions are used to 
construct a unitary operator that diagonalizes $T$, 
i.e. realizes a spectral representation of $T$, 
see Theorem \ref{specuni:thm}. 

The third aim of the paper is to derive 
a convenient formula for 
the (finite) spectral multiplicity of 
Toeplitz operators with piecewise continuous symbols. This result is crucial 
for our construction of scattering theory in \cite{SY}. 

To summarize, this paper supplements Rosenblum's articles 
\cite{Ros1, Ros2, Ros3}, and it is a prerequisite for \cite{SY}.

The more detailed plan of the paper is as follows: 
Sect. \ref{toeplitz_forms:sect} 
contains basic definitions and a convenient formula for the bilinear form of the resolvent 
$(T-z)^{-1}$. 
In Sect.~\ref{genT:sect} we prove the absolute continuity of $T$ and provide 
 a formula for the spectral family of $T$. 
From Sect.~\ref{diag:sect} onwards, we impose Condition \ref{mult:cond} which ensures that 
the spectrum of $T$ on a fixed interval $\Lambda\subset\mathbb R$ is of finite multiplicity.
In this case  general results of Sect. \ref{genT:sect} can be made more explicit. 
In particular, we introduce  generalized 
eigenfunctions (or the continuous spectrum eigenfunctions) 
of the Toeplitz operator and produce a formula 
(see Theorem \ref{multiple:thm}) 
for its spectral family in terms of these functions. 
 The latter are used to construct a unitary operator that diagonalizes $T$, see Theorem \ref{specuni:thm}.  
Properties of eigenfunctions of Toeplitz operators are studied in Sect.~\ref{eigen:sect} 
where we establish a link with the Riemann-Hilbert problem, 
see \cite{Gahov} for information 
on Riemann-Hilbert problems. 
Here we also discuss two examples of symbols for which the eigenfunctions can be found explicitly. 

The final 
Sect.~\ref{piecewise:sect} is devoted to Toeplitz operators with piecewise continuous symbols.   
In this case Condition 
\ref{mult:cond}, which guarantees that the multiplicity is finite, 
is satisfied and the multiplicity is 
expressed via the number of intervals of monotonicity 
and number of jumps of the symbol, see Theorem \ref{PWC:thm}.

To conclude the introduction we make some notational conventions. 
The unit circle ${\mathbb T}$ is equipped with the normalized 
Lebesgue measure $d{\bf m}(\z) =   (2\pi i\z)^{-1} d\z$ where 
$ \z\in\mathbb T$. For any $\z_1, \z_2\in \mathbb T$, we denote by 
$(\z_1, \z_2)$ the open arc joining $\z_1$ and $\z_2$ 
counterclockwise.   
For general information on functions analytic on the unit disk $\mathbb D$, 
we refer, for example, to the books  \cite{Duren} or \cite{Hof}. In particular,
the notation $\mathbb H^p = \mathbb H^p(\mathbb T)$, $p >0$, 
stands for the classical Hardy spaces. By $\|\cdot\|_p$ 
we denote the standard norm in $\mathbb H^p$.
The space $\mathbb H^2 \subset L^2(\mathbb T)$ is considered as  
a subspace of $L^2(\mathbb T)$, with inner product 
\begin{align*}
(f, g) = \int\limits_{\mathbb T} f(\z) \overline{g(\z)} d{\mathbf m}(\z).
\end{align*}
The orhogonal projection onto $\mathbb H^2$ is denoted by $\mathbb P$. By $E(X)$ with a Borel set 
$X\subset\mathbb R$ we denote the 
spectral family of a  self-adjoint Toeplitz operator $T$. We also use the standard notation 
$E(\lambda) = E((-\infty, \lambda))$, $\lambda\in \mathbb R$. 
 For a set $B$,  we denote its closure by $\clos B$.

\textit{Throughout the paper we assume that $\omega$ is a non-constant function on $\mathbb T$. }

\section{ Toeplitz operators, their quadratic forms and resolvents}\label{toeplitz_forms:sect}

\subsection{ Basic definitions}

Let us first recall the precise definition of Toeplitz operators.  
 If $\omega\in L^\infty( {\mathbb T})$, 
then the  Toeplitz operator $T = T(\omega)$ is defined on the space  ${\mathbb H}^{2}$ 
by the formula
\begin {equation*}
 Tf = {\mathbb P} (\omega  f), \q f\in {\mathbb H}^{2}.
\end{equation*}
We always suppose that the symbol $\omega$ is real-valued, 
so that $T(\omega)$ is self-adjoint. 

If $\omega$ is unbounded, we define 
the operator $T(\omega)$  via the sesqui-linear form
\begin{align}\label{tform:eq}
T[f, g] = \int\limits_{\mathbb T} \omega(\z) f (\z) \ov{g(\z)} d{\bf m}(\z)
\end{align}
where $f,g$ are polynomials   (or  $f,g\in \mathbb H^\infty$). 
The form is well-defined under the following condition.
  
\begin{cond}\label{om:cond}
  \begin{enumerate}[{\rm(i)}]
 \item 
 $\omega$ is real valued and $\omega\in L^1(\mathbb T)$, 
 \item 
  $\gamma_1 = \textup{ess-inf}\ \omega>-\infty$.   
 \end{enumerate}
 \end{cond}

This condition  ensures that 
the form \e{tform:eq} is semi-bounded from below and, as we will see, it is closable.
 
 Let us introduce the Schwarz kernel
\begin {equation}
H(z) = \frac{1+z}{1-z} ,\q z\in \mathbb D.
\label{eq:HH}\end{equation}
Then  
\begin{align}\label{Poisson:eq}
\frac{1}{2\pi}\Re H(re^{i\theta})
  = \frac{1}{2\pi} \frac{1-r^2}{1-2r\cos\theta + r^2}=: \mathcal P(r, \theta), \q r < 1,
\end{align}
is the Poisson kernel. Obviously, 
\begin{align}\label{eq:Poisson1}
\mathcal P(r, \theta) > 0 \ \ \textup{and}\ \ \int_{-\pi}^{\pi}\mathcal P(r, \theta)d \theta=1, \q
\forall r <1.
\end{align} 
The function
\begin{align}
K_u(z) = \frac{1}{1-\overline u z},\q z\in \mathbb D,\q u\in \clos\mathbb D,
\label{eq:KK}\end{align}  
is known as the reproducing kernel.
It follows from the formula
\begin{align}\label{reprod:eq}
(f, K_u) = \int\limits_{\mathbb T} \frac{f(\z)}{1-u \overline{\z}} 
\ d{\bf m}(\z)
= f(u),\q f\in \mathbb H^2,
\end{align}
that the set $\{K_u, u\in \mathbb D\}$ is  total  in the space $\mathbb H^2$, 
that is, the set $\mathcal{K} = {\rm span}\,\{K_u, u\in \mathbb D\}$ 
is dense in this space. 
Clearly, the set $\mathcal{K}$ consists of all rational 
functions with simple poles lying in the exterior of $\clos\mathbb D$ 
and tending to zero at infinity.
Let us also note the identities  $K_u(z) =\overline{K_z(u)}$ and 
\begin{align}\label{toz:eq}
H(\overline u z) + H(v\overline z) = 2(1-\overline u v|z|^2 )  K_u(z) \overline{K_{  v}(z)},
\end{align} 
valid for all $z,u, v\in \mathbb D$.
 
For  
$\lambda< \gamma_{1}$, we introduce  an outer function (see, e.g., \cite{Hof}  for basic properties of such functions)
\begin{equation}
F_\lambda(z) 
= \exp\bigg(
\frac{1}{2}\int\limits_{\mathbb T} 
\ln \big(\omega(\z)-\lambda\big) H(z\overline\z) d{\bf m}(\z)
\bigg)
\label{eq:FF}\end{equation}
 of $z\in \mathbb D$, associated with the function 
$(\omega(\z)-\lambda)^{1/2}$  of $\z\in \mathbb T$. 
Since $\omega\in L^1(\mathbb T)$, we have $F_\lambda\in \mathbb H^2$ and 
\begin{equation}
\omega(\z)-\lambda    = |F_\lambda(\z)|^2,\q \textup{a.e.} \q \z\in\mathbb T.
\label{eq:Poiss}\end{equation}
Thus,  the quadratic form  \e{tform:eq} can be written as
\begin{align}\label{ts:eq}
T[f, g]  = ({\bf F}_\lambda f, {\bf F}_\lambda g) + \lambda(f,g),
\end{align}
where ${\bf F}_\lambda$ is the operator of multiplication by $F_\lambda$ 
 in $\mathbb H^2$. Since the operator ${\bf F}_\lambda$ is closed on the domain
$D({\bf F}_\lambda) : = \{f\in \mathbb H^2: F_\lambda f\in \mathbb H^2\}$, the form $T[f, f] $ is well defined and closed on the domain
$D[T]: =D({\bf F}_\lambda)$. This allows one (see, e.g., \cite[Ch. 10]{BS}) to define $T =T(\omega)$ via this form.

\begin{definition}
 The self-adjoint operator $T$   is correctly defined
on  a dense set $D(T)\subset D[T]$
by the relation
\begin{equation} 
(T f, g) = T[f, g]  ,\q \forall f\in D(T), \q \forall g\in D[T].
\label{eq:TS}\end{equation}
 \end{definition}

 The operator $T$ is semi-bounded from below by $\gamma_{1}$. 
 Comparing equalities \e{ts:eq} and \e{eq:TS}, we see that
 \[
 ((T-\lambda) f,   g)=({\bf F}_\lambda f, {\bf F}_\lambda g) ,\q \forall f, g\in D(T).
 \]
 By the definition of the adjoint operator, it follows that
 ${\bf F}_\lambda f\in D({\bf F}^*_\lambda)$ for all $f\in D(T)$ and
 \begin{equation} 
( T-\lambda ) f= {\bf F}_\lambda^*{\bf F}_\lambda f,\q \forall f\in D(T),\ \lambda < \gamma_1.
\label{eq:TS1}
\end{equation} 
In view of \eqref{eq:Poiss},  
$|F_\lambda (\z)|^2\ge  \gamma_1-\lambda$ for all $\z\in{\mathbb T}$, whence 
$F_\lambda^{-1}\in \mathbb H^\infty$. This implies that for all $\lambda<\gamma_{1}$,
  \begin{align}\label{ran:eq}
\Ker {\bf F}_\lambda  =\{0\}, \q \Ran {\bf F}_\lambda = \mathbb H^2,
\end{align}
and the inverse ${\bf F}_\lambda ^{-1}$ is a 
bounded operator on $\mathbb H^2$. 
Since the operator ${\bf F}_\lambda$ is closed, its adjoint ${\bf F}_\lambda^*$ is 
densely defined and ${\bf F}_\lambda^{**}={\bf F}_\lambda$. Now \e{ran:eq} implies that
$\Ker {\bf F}_\lambda^*  =\{0\}$ and $\clos\Ran {\bf F}_\lambda^* = \mathbb H^2$. 
The inverse operator $({\bf F}_\lambda^*)^{-1}$ 
is defined on $\Ran {\bf F}_\lambda^*$ and (see, e.g., \cite[Theorem~3.3.6]{BS})
\[
({\bf F}_\lambda^*)^{-1} = ({\bf F}_\lambda^{-1})^*.
\] 
In particular, $({\bf F}_\lambda^*)^{-1}$ 
  extends to
 a bounded operator  on the whole space $\mathbb H^2$.

Recall that the function   $K_u$, $u\in \mathbb D$, is defined by formula \e{eq:KK}. According to \e{reprod:eq} we have
\begin{align*}
({\bf F}_\lambda f, K_u)  
= F_\lambda(u)( f, K_u), \q \forall f\in D({\bf F}_\lambda).
\end{align*}
Thus the adjoint operator ${\bf F}_\lambda^*$ acts  
on $K_u $ by the formula 
\[
({\bf F}_\lambda^* K_u )(z)= \overline{F_\lambda(u)} K_u(z),\q z\in{\mathbb D} ,
\] 
whence 
\begin{equation}
( {\bf F}_\lambda^* )^{-1}K_u  =( \overline{F_\lambda(u)})^{-1} K_u.
\label{eq:TS2a}\end{equation}

\subsection{Resolvent}

Here we find an explicit formula for the 
 sesqui-linear form $((T-~\lambda)^{-1} K_u, K_v)$ of the 
 resolvent of the operator  $T$ for all $u,v\in{\mathbb D}$. Suppose first that $\lambda<\gamma_{1}$.
 We proceed from factorization \e{eq:TS1} 
 which implies 
\begin{equation*}
(T-\lambda)^{-1} = {\bf F}_\lambda^{-1}({\bf F}_\lambda^*)^{-1}.
\end{equation*}
  Using also \e{eq:TS2a}, we find that
\begin{align*}
( (T-\lambda)^{-1} K_u, K_v)
= &\ (({\bf F}_\lambda^*)^{-1}K_u, ({\bf F}_\lambda^*)^{-1} K_v) \\[0.2cm]
= &\   (\overline{F_\lambda(u)})^{-1} 
({F_\lambda(v)})^{-1}  ( K_u, K_v )
= (1-\overline u v)^{-1} (\overline{F_\lambda(u)})^{-1} 
({F_\lambda(v)})^{-1}  .
\end{align*}
In view of definition \e{eq:FF} this yields
\begin{multline} 
((T-\lambda)^{-1}\  K_u, K_v)
\\
= \ (1-\overline u v)^{-1} 
\exp\bigg(
- \frac{1}{2}\int\limits_{\mathbb T} 
\ln \big(\omega(\z) - \lambda\big)
\big( H(v\overline\z) +  H(\overline{u}\z)
\big) d\bf m(\z)
\bigg).
\label{eq:resolvent}
\end{multline} 
Here $\lambda < \gamma_1$, 
but, 
by analyticity of both sides,  this equality extends to complex $\lambda$.  
We have chosen the principal branch of the logarithm: $\arg \big(\omega(\z) - \lambda\big) = 0$ 
for $\lambda< \gamma_{1}$. Then
$\arg \big(\omega(\z) - \lambda\big)\in (-\pi,0)$ for $\Im \lambda>0$ and $\arg \big(\omega(\z) - \lambda\big)\in (0,\pi)$ for $\Im \lambda<0$. 
In particular, $\arg \big(\omega(\z) - \lambda \mp i0\big)= \mp \pi i$ if $\lambda>\gamma_{2}$.
Note also that
\[
\int\limits_{\mathbb T} 
\big( H(v\overline\z) +  H(\overline{u}\z)
\big) d{\bf m}(\z)= 2, \q \forall u,v\in {\mathbb D}.
\]
This implies that the limit values of the right-hand side of \e{eq:resolvent} for $\lambda+i 0$ and $\lambda-i 0$ are the same if $\lambda> \gamma_{2}$, and hence this function is analytic in the half-plane  $\Re z>\gamma_2)$.

Let us state the result obtained.

 \begin{proposition}\label{res}
 Let  Condition~$\ref{om:cond}$ be satisfied, and let the functions $K_{u} (z)$ be defined by formula \e{eq:KK}. Then formula 
\e{eq:resolvent} is true for all $u,v\in {\mathbb D}$ and all $\lambda$
in the complex plane with 
a cut along $[\gamma_1, \gamma_2]$.
\end{proposition}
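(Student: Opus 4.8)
The plan is to establish formula \e{eq:resolvent} first for real $\lambda < \gamma_1$, where all the operator-theoretic machinery of the preceding subsection applies directly, and then extend to the cut plane by analytic continuation. For $\lambda < \gamma_1$ the factorization \e{eq:TS1}, namely $(T-\lambda)f = {\bf F}_\lambda^* {\bf F}_\lambda f$, together with the invertibility statements \e{ran:eq} gives $(T-\lambda)^{-1} = {\bf F}_\lambda^{-1}({\bf F}_\lambda^*)^{-1}$ as a bounded operator on $\mathbb H^2$. Applying this to the reproducing kernels $K_u, K_v$ and using the action \e{eq:TS2a} of $({\bf F}_\lambda^*)^{-1}$ on $K_u$, one computes
\[
((T-\lambda)^{-1}K_u, K_v) = (\overline{F_\lambda(u)})^{-1}(F_\lambda(v))^{-1}(K_u, K_v),
\]
and then $(K_u, K_v) = K_u(v) = (1-\overline u v)^{-1}$ by \e{reprod:eq}. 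Substituting the definition \e{eq:FF} of $F_\lambda$ and collecting the two exponential factors yields precisely the right-hand side of \e{eq:resolvent}, with the sum $H(v\overline\z) + H(\overline u \z)$ appearing in the integrand. This chain of equalities is already carried out verbatim in the paragraph preceding the Proposition, so for $\lambda < \gamma_1$ nothing new is needed.

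The remaining work is the analytic continuation. The left-hand side $((T-\lambda)^{-1}K_u, K_v)$ is, for each fixed $u, v \in \mathbb D$, an analytic function of $\lambda$ on the resolvent set $\C \setminus \sigma(T) = \C \setminus [\gamma_1, \gamma_2]$ (recall $\sigma(T) = [\gamma_1,\gamma_2]$ by \e{eq:sig} since $\omega$ is non-constant); analyticity of the resolvent in the scalar product of two fixed vectors is standard. For the right-hand side I would check that the integral
\[
\int_{\mathbb T} \ln(\omega(\z) - \lambda)\big(H(v\overline\z) + H(\overline u \z)\big)\, d{\bf m}(\z)
\]
defines an analytic function of $\lambda$ on $\C \setminus [\gamma_1, \gamma_2]$, using the principal branch of the logarithm with $\arg(\omega(\z)-\lambda) = 0$ for $\lambda < \gamma_1$. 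The point $\gamma_1$ and $\gamma_2$ being finite, for $\lambda$ in any compact subset of $\C \setminus [\gamma_1,\gamma_2]$ the quantity $|\omega(\z) - \lambda|$ is bounded below by a positive constant and above by $|\omega(\z)| + C$, so $|\ln(\omega(\z)-\lambda)| \le C' + |\ln|\omega(\z)-\lambda||$, which is dominated by an $L^1$ function (using $\omega \in L^1$ and the lower bound); the kernels $H(v\overline\z), H(\overline u\z)$ are bounded on $\mathbb T$ for fixed $u, v\in\mathbb D$. Differentiation under the integral sign (justified by the same domination, since $\partial_\lambda \ln(\omega-\lambda) = -(\omega-\lambda)^{-1}$ is bounded locally uniformly in $\lambda$ away from the cut) shows the integral is holomorphic, hence so is the exponential, hence so is the whole right-hand side of \e{eq:resolvent}. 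Since both sides are analytic on the connected open set $\C \setminus [\gamma_1,\gamma_2]$ and agree on the interval $(-\infty, \gamma_1)$, which has an accumulation point there, they agree on all of $\C \setminus [\gamma_1,\gamma_2]$ by the identity theorem.

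I do not anticipate a serious obstacle here; the main point requiring care is simply the bookkeeping for the branch of the logarithm and the verification that the integral defining the right-hand side is genuinely analytic across the whole cut plane (in particular that it does not develop singularities off the cut), which reduces to the elementary estimates on $|\ln(\omega(\z)-\lambda)|$ described above. One should also note, as the text does, that the two boundary values $\lambda \pm i0$ of the right-hand side coincide for $\lambda > \gamma_2$ because $\int_{\mathbb T}(H(v\overline\z) + H(\overline u\z))\,d{\bf m}(\z) = 2$ makes the jump in $\arg$ cancel; this is consistent with, but not needed for, the statement of the Proposition as phrased (a cut along $[\gamma_1,\gamma_2]$ only). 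The proof is then complete.
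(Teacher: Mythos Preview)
Your proof is correct and follows essentially the same route as the paper: establish \e{eq:resolvent} for $\lambda<\gamma_1$ via the factorization \e{eq:TS1} and the action \e{eq:TS2a}, then extend by analytic continuation to the cut plane. You supply more detail on the analyticity of the right-hand side than the paper does (the paper simply says ``by analyticity of both sides''), which is fine. One small slip: you write ``$\gamma_1$ and $\gamma_2$ being finite,'' but Condition~\ref{om:cond} does not require $\gamma_2<\infty$; your estimates on $|\omega(\z)-\lambda|$ go through unchanged when $\gamma_2=\infty$, so this does not affect the argument.
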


We emphasize that this result is not new; see \cite{CSW} and \cite{Ros1}, for original proofs. Our derivation is an expanded version of Rosenblum's argument from \cite{Ros2}. 

\section{Spectral properties of general Toeplitz operators}\label{genT:sect} 
  
\subsection{Absolute continuity}

We proceed from the following abstract result (see, e.g., 
\cite[Theorem~XIII]{RS4} or \cite[Proposition 1.4.2]{Yafaev}), 
which can be interpreted as a weak form of the limiting absorption principle. 
It shows that the
absolute continuity is a consequence of the 
existence of appropriate  boundary values of the resolvent.  

\begin{proposition}\label{Y:prop}
Let $A$ be a self-adjoint operator on a Hilbert space 
$\mathcal H$ with the spectral measure $E_{A} (\cdot)$,  
and let $X\subset \mathbb R$ be a compact interval. 
Suppose that for some element $g\in \mathcal H$ there is a number $p >1$ such that 
\begin{align*}
\sup_{\varepsilon\in (0, 1]} \int\limits_X 
|\Im ((A-\lambda-i\varepsilon)^{-1}g, g)|^p d\lambda < \infty.
\end{align*} 
Then the measure $(E_{A} (\cdot)g,g)$ is absolutely continuous on the interval $X$.
\end{proposition}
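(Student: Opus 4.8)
The plan is to convert the resolvent bound into a statement about Poisson integrals of the spectral measure and then extract a density by weak compactness in $L^{p}$. I would set $\mu(\cdot)=(E_A(\cdot)g,g)$, a finite non‑negative Borel measure on $\mathbb R$; absolute continuity of $(E_A(\cdot)g,g)$ on $X$ means precisely that $\mu|_X$ has a density with respect to Lebesgue measure. By the spectral theorem, for $\lambda\in\mathbb R$, $\varepsilon>0$,
\[
\Im\big((A-\lambda-i\varepsilon)^{-1}g,g\big)
=\int_{\mathbb R}\frac{\varepsilon}{(t-\lambda)^{2}+\varepsilon^{2}}\,d\mu(t)
=\pi\,(\mathcal P_\varepsilon*\mu)(\lambda),
\]
where $\mathcal P_\varepsilon(s)=\pi^{-1}\varepsilon(s^{2}+\varepsilon^{2})^{-1}\ge 0$ is the Poisson kernel of the upper half-plane, $\int_{\mathbb R}\mathcal P_\varepsilon(s)\,ds=1$. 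Hence the hypothesis states that the functions $f_\varepsilon:=\mathcal P_\varepsilon*\mu$ are bounded in $L^{p}(X)$ uniformly in $\varepsilon\in(0,1]$.

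For the main step I write $X=[a,b]$. Since $1<p<\infty$, the space $L^{p}(X)$ is reflexive and separable, so I can pass to a sequence $\varepsilon_{n}\to 0$ and a function $f\in L^{p}(X)$ with $f_{\varepsilon_{n}}\rightharpoonup f$ weakly in $L^{p}(X)$, and then identify $f$ as the sought density on $(a,b)$. Fix a real $\varphi\in C_{c}^{\infty}((a,b))$. Weak convergence gives $\int_{X}f_{\varepsilon_{n}}\varphi\,d\lambda\to\int_{X}f\varphi\,d\lambda$; on the other hand, Fubini's theorem (applicable since $\mathcal P_\varepsilon$ is bounded and $\mu$ finite) and the evenness of $\mathcal P_\varepsilon$ give
\[
\int_{X}f_\varepsilon(\lambda)\varphi(\lambda)\,d\lambda
=\int_{\mathbb R}\Big(\int_{\mathbb R}\mathcal P_\varepsilon(\lambda-t)\varphi(\lambda)\,d\lambda\Big)\,d\mu(t)
=\int_{\mathbb R}(\mathcal P_\varepsilon*\varphi)(t)\,d\mu(t),
\]
the inner $\lambda$-integral being over all of $\mathbb R$ because $\varphi$ is supported in $X$. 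As $\varepsilon\to0$, $\mathcal P_\varepsilon*\varphi\to\varphi$ uniformly on $\mathbb R$ with $\|\mathcal P_\varepsilon*\varphi\|_{\infty}\le\|\varphi\|_{\infty}$, so (using $\mu(\mathbb R)<\infty$) dominated convergence yields $\int_{\mathbb R}(\mathcal P_\varepsilon*\varphi)\,d\mu\to\int_{\mathbb R}\varphi\,d\mu$. Comparing the two limits, $\int_{X}f\varphi\,d\lambda=\int_{\mathbb R}\varphi\,d\mu$ for every $\varphi\in C_{c}^{\infty}((a,b))$, which forces $d\mu=f\,d\lambda$ on the open interval $(a,b)$.

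It remains to exclude atoms at the two endpoints, which I would handle directly: if $\mu(\{a\})=c>0$, then for $\lambda\in[a,a+\varepsilon]\subset X$ one has $f_\varepsilon(\lambda)\ge\mathcal P_\varepsilon(\lambda-a)\,c\ge c/(2\pi\varepsilon)$, whence $\int_{X}|f_\varepsilon|^{p}\,d\lambda\ge(c/2\pi)^{p}\,\varepsilon^{1-p}\to\infty$ as $\varepsilon\to0$, contradicting the hypothesis; the same holds at $b$. Therefore $\mu|_{X}=f\,d\lambda$ is absolutely continuous, as claimed.

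The only genuine subtlety is that absolute continuity really must be obtained via the weak-compactness step and cannot be read off from a pointwise estimate: the endpoint computation, applied to an arbitrary subinterval $\delta\subset X$, only gives a Frostman-type bound $\mu(\delta)\le C\,|\delta|^{(p-1)/p}$ whose exponent is strictly below $1$, and such a bound does \emph{not} imply absolute continuity (the Cantor--Lebesgue measure satisfies one). It is the reflexivity of $L^{p}(X)$, which holds precisely for $p>1$, that does the work — and correspondingly the statement fails at $p=1$, as a single point mass shows — while the remaining ingredients (Fubini, the approximate identity $\mathcal P_\varepsilon*\varphi\to\varphi$, uniqueness of the density) are routine.
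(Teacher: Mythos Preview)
Your proof is correct. The paper does not actually prove this proposition: it states it as a known abstract result and refers the reader to Reed--Simon \cite{RS4} and Yafaev \cite[Proposition~1.4.2]{Yafaev}. Your argument --- rewriting $\Im((A-\lambda-i\varepsilon)^{-1}g,g)$ as a Poisson integral of the spectral measure, extracting a weak $L^{p}$ limit by reflexivity, and identifying it as the density via the approximate-identity property --- is precisely the standard route taken in those references, so in substance you have reproduced the cited proof. The endpoint-atom computation and the closing remark about why $p>1$ is essential (and why a Frostman bound alone would not suffice) are nice additions that the cited sources do not spell out.
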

Let us return 
to Toeplitz operators $T = T(\omega)$. 
\textit{  Recall that $\omega(\z)$ 
is always assumed to be a non-constant function.}
  
 \begin{lemma} \label{repres}
Let Condition~$\ref{om:cond}$  be satisfied, and let $\mathcal P(r, \theta) $ be the Poisson kernel \eqref{Poisson:eq}.
For $z = re^{i\theta}$, $r <1$, $\theta\in (-\pi, \pi]$, set
\begin{align}
\mu(t) = \mu(t; z) 
= \int\limits_{ \substack{\tau\in (-\pi,\pi]\\
\omega(e^{i\tau})< t}
}\mathcal P(r, \theta-\tau) d\tau,
\q t\in {\mathbb R}.
\label{eq:AB}\end{align}
Then $\mu (t)$ is non-decreasing, $\mu(t) = 0$ for $t\leq \gamma_1$, $\mu(t) = 1$ for $t> \gamma_2$ and
\begin{align}\label{modpsi:eq}
(1-r^{2}) |((T-\lambda-i\varepsilon)^{-1}\  K_z, K_z)|= \exp\bigg(
-  \int_{\gamma_{1}} ^{\gamma_{2}}
\ln \big|t - \lambda-i\varepsilon\big| d\mu(t)
\bigg),
\end{align}
for any $\varepsilon\not = 0$.
\end{lemma}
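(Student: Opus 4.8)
The plan is to derive \eqref{modpsi:eq} directly from the explicit resolvent formula of Proposition~\ref{res}. First, the elementary properties of $\mu$: since $\mathcal P(r,\theta-\tau)>0$ by \eqref{eq:Poisson1}, definition \eqref{eq:AB} makes $\mu(\,\cdot\,;z)$ non-decreasing; because $\omega(e^{i\tau})\ge\gamma_1$ a.e., the integration set in \eqref{eq:AB} is Lebesgue-null when $t\le\gamma_1$, so $\mu(t)=0$ there; and because $\omega(e^{i\tau})\le\gamma_2$ a.e., for $t>\gamma_2$ that set is all of $(-\pi,\pi]$ up to a null set, so $\mu(t)=\int_{-\pi}^{\pi}\mathcal P(r,\theta-\tau)\,d\tau=1$ by periodicity of $\mathcal P(r,\cdot)$ together with \eqref{eq:Poisson1}. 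Thus the Lebesgue--Stieltjes measure $d\mu$ is a probability measure supported in $[\gamma_1,\gamma_2]$.

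For the identity \eqref{modpsi:eq}, I would set $u=v=z=re^{i\theta}$ in \eqref{eq:resolvent}. Writing $\zeta=e^{i\tau}$ gives $d{\bf m}(\zeta)=d\tau/(2\pi)$; moreover $\bar z\zeta=\overline{z\bar\zeta}$ and, since $H$ in \eqref{eq:HH} has real coefficients, $H(\bar w)=\overline{H(w)}$, so by \eqref{Poisson:eq}
\[
H(z\bar\zeta)+H(\bar z\zeta)=2\Re H\big(re^{i(\theta-\tau)}\big)=4\pi\,\mathcal P(r,\theta-\tau).
\]
Since $1-\bar zz=1-r^2$, formula \eqref{eq:resolvent} collapses to
\[
\big((T-\lambda)^{-1}K_z,K_z\big)=(1-r^2)^{-1}\exp\Big(-\int_{-\pi}^{\pi}\ln\big(\omega(e^{i\tau})-\lambda\big)\,\mathcal P(r,\theta-\tau)\,d\tau\Big).
\]
As $\varepsilon\ne0$ puts $\lambda+i\varepsilon$ off the cut $[\gamma_1,\gamma_2]$, Proposition~\ref{res} permits replacing $\lambda$ by $\lambda+i\varepsilon$ here. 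Taking modulus and using $|e^w|=e^{\Re w}$, $\Re\ln w=\ln|w|$ for the principal branch (the one fixed after \eqref{eq:resolvent}), and that $\mathcal P(r,\cdot)$ is real and positive, yields
\[
(1-r^2)\,\big|\big((T-\lambda-i\varepsilon)^{-1}K_z,K_z\big)\big|=\exp\Big(-\int_{-\pi}^{\pi}\ln\big|\omega(e^{i\tau})-\lambda-i\varepsilon\big|\,\mathcal P(r,\theta-\tau)\,d\tau\Big).
\]

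Finally, I would invoke the change-of-variables (push-forward) formula: by \eqref{eq:AB}, $\mu(\,\cdot\,;z)$ is exactly the distribution function of the image of the probability measure $\mathcal P(r,\theta-\tau)\,d\tau$ on $(-\pi,\pi]$ under the map $\tau\mapsto\omega(e^{i\tau})$, so $\int_{-\pi}^{\pi}\phi(\omega(e^{i\tau}))\,\mathcal P(r,\theta-\tau)\,d\tau=\int_{\gamma_1}^{\gamma_2}\phi(t)\,d\mu(t)$ for every Borel $\phi$ with absolutely convergent right-hand side; taking $\phi(t)=\ln|t-\lambda-i\varepsilon|$ gives \eqref{modpsi:eq}. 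No step here is a genuine obstacle; the only matters deserving a line of care are checking that $\lambda+i\varepsilon$ really lies in the region where Proposition~\ref{res} holds (so that no boundary-value argument is needed), and the integrability of $\ln|t-\lambda-i\varepsilon|$ against $d\mu$ — harmless near $\gamma_1$ by continuity, and, in case $\gamma_2=+\infty$, finite at infinity since $\ln_+\omega\le\omega\in L^1(\mathbb T)$ while $\mathcal P(r,\cdot)$ is bounded for the fixed $r<1$; the same bound underlies the fact that $F_\lambda\in\mathbb H^2$ in \eqref{eq:FF}.
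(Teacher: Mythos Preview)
Your proof is correct and follows essentially the same route as the paper: specialize \eqref{eq:resolvent} to $u=v=z$, identify $H(z\bar\zeta)+H(\bar z\zeta)$ with $4\pi\mathcal P(r,\theta-\tau)$ via \eqref{Poisson:eq}, take the modulus, and then apply the push-forward (change of variables $t=\omega(e^{i\tau})$, citing Halmos) to pass from the integral over $\tau$ to the integral against $d\mu$. You have simply filled in more detail than the paper's brief version, including the integrability check when $\gamma_2=+\infty$.
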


\begin{proof} 
The equality $\mu(t) = 0, t\le \gamma_1$, is obvious, and the fact that 
$\mu(t) = 1$ for $t> \gamma_2$ is a consequence of \eqref{eq:Poisson1}. 

  It follows from formulas \eqref{Poisson:eq} and \e{eq:resolvent} that
\begin{multline}
(1-r^{2}) |((T- \lambda-i\varepsilon)^{-1}\  K_z, K_z)|
\\
= \exp\bigg(
-  \int\limits_{-\pi}^\pi 
\ln \big| \omega(e^{i\tau}) - \lambda-i\varepsilon\big|
 \mathcal P(r, \theta-\tau) d\tau
\bigg).
\label{eq:AA}
\end{multline}
Using the change of variables $t=\omega(e^{i\tau})$, we can (see, e.g., 
\cite[\S 39, Theorem C]{Halm}) rewrite \eqref{eq:AA} 
as \e{modpsi:eq}. 
\end{proof} 
 
 \begin{lemma} \label{reprX}
  Suppose that Condition \ref{om:cond} is satisfied. 
Then for all $t\in (\gamma_{1}, \gamma_{2})$  and all 
$z\in\mathbb D$, we have the strict inequalities
\begin{align*}
0<\mu(t; z) <1.
\end{align*}
\end{lemma}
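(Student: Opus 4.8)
The plan is to read off the result directly from the two elementary properties of the Poisson kernel recorded in \eqref{eq:Poisson1}: for $z=re^{i\theta}\in\mathbb D$ we have $r<1$, so $\mathcal P(r,\theta-\tau)>0$ for \emph{every} $\tau$, and $\int_{-\pi}^{\pi}\mathcal P(r,\theta-\tau)\,d\tau=1$. Combining the second fact with the definition \eqref{eq:AB} gives the two complementary representations
\[
\mu(t;z)=\int_{\{\tau\in(-\pi,\pi]:\ \omega(e^{i\tau})<t\}}\mathcal P(r,\theta-\tau)\,d\tau,
\qquad
1-\mu(t;z)=\int_{\{\tau\in(-\pi,\pi]:\ \omega(e^{i\tau})\ge t\}}\mathcal P(r,\theta-\tau)\,d\tau .
\]
Each of $\mu(t;z)$ and $1-\mu(t;z)$ is thus the integral of a strictly positive function over a measurable subset of $(-\pi,\pi]$, hence it is strictly positive precisely when that subset has positive Lebesgue measure. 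So the whole matter reduces to showing that the sets $S_-(t)=\{\tau:\ \omega(e^{i\tau})<t\}$ and $S_+(t)=\{\tau:\ \omega(e^{i\tau})\ge t\}$ both have positive measure when $\gamma_1<t<\gamma_2$.

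For $S_-(t)$ I would simply invoke the definition of the essential infimum: since $t>\gamma_1=\textup{ess-inf}\,\omega$, the set $\{\tau:\ \omega(e^{i\tau})<t\}$ has positive measure, whence $\mu(t;z)>0$. For $S_+(t)$ I would pick an auxiliary value $s$ with $t<s<\gamma_2=\textup{ess-sup}\,\omega$; by the definition of the essential supremum $\{\tau:\ \omega(e^{i\tau})>s\}$ has positive measure, and since $\{\omega>s\}\subset\{\omega\ge t\}$ it follows that $S_+(t)$ has positive measure as well, so $1-\mu(t;z)>0$, i.e.\ $\mu(t;z)<1$. In effect this upgrades the boundary values $\mu(t;z)=0$ for $t\le\gamma_1$ and $\mu(t;z)=1$ for $t>\gamma_2$ from Lemma~\ref{repres} to strict inequalities throughout the open interval $(\gamma_1,\gamma_2)$.

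There is really no serious obstacle here; the only places needing (routine) care are that $z\in\mathbb D$ genuinely forces $r<1$, so that $\mathcal P(r,\cdot)$ is strictly positive everywhere rather than merely nonnegative, and that the periodicity/shift conventions behind \eqref{eq:AB} are such that it is the \emph{Lebesgue measure of the $\tau$-set} $\{\omega(e^{i\tau})\lessgtr t\}$ that controls positivity of the integral — both of which are already built into \eqref{eq:Poisson1} and the proof of Lemma~\ref{repres}.
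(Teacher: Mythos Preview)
Your argument is correct and is essentially the same as the paper's: both write $1-\mu(t;z)$ as the Poisson integral over $\{\tau:\omega(e^{i\tau})\ge t\}$, use strict positivity of the Poisson kernel to reduce to showing this set (and its complement) has positive Lebesgue measure, and then invoke the definitions of $\gamma_1=\textup{ess-inf}\,\omega$ and $\gamma_2=\textup{ess-sup}\,\omega$. The paper phrases the $\mu(t;z)<1$ step as a contradiction and omits your auxiliary point $s$ (one can use $|\{\omega>t\}|>0$ directly), but this is purely cosmetic.
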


\begin{proof} 
Let us check, for example, that $\mu(t) <1$. If $\mu(t)=1$ for some $t<\gamma_2$, then according to the definition \e{eq:AB} we have
\[
 \int\limits_{ \omega(e^{i\tau})\geq t}\mathcal P(r, \theta-\tau) d\tau
=0.
\]
Since 
$ \mathcal P(r, \theta-\tau)>0$, 
it follows that the Lebesgue measure 
$|\{e^{i\tau} \in {\mathbb T}:\omega(e^{i\tau})\geq t\}|=0$. However, this measure is positive for every $t<\gamma_2$  by the definition of $\gamma_{2}$. The inequality $\mu(t) >0$ can be verified quite similarly.
\end{proof}

Now we are in a position to establish a weak form of the limiting absorption principle for Toeplitz operators.

\begin{theorem} \label{integr:lem}
Let the symbol $\omega$ of a Toeplitz operator $T = T(\omega)$ 
satisfy Condition $\ref{om:cond}$. Then 
for any  compact interval $X\subset \mathbb R$ and any $z\in \mathbb D$  
there exists a number $p >1$ such that 
\begin{align}\label{2p:eq}
\sup_{\varepsilon\in (0, 1]}\int\limits_X 
|((T(\omega) - \lambda-i\varepsilon)^{-1}\  K_z, K_z )|^{p}\, d\lambda  < \infty.
\end{align}
\end{theorem}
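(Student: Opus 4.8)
The plan is to read a clean upper bound off formula \eqref{modpsi:eq} and then reduce the whole statement to an exponential integrability estimate for a logarithmic potential, which I would prove by a moment computation. Since $\abs{t-\lambda-i\varepsilon}\ge\abs{t-\lambda}$ and $\mu=\mu(\cdot\,;z)$ is a probability measure by Lemma~\ref{repres}, formula \eqref{modpsi:eq} gives, for every $\varepsilon\neq0$,
\begin{align*}
(1-r^{2})\,\Abs{\bigl((T-\lambda-i\varepsilon)^{-1}K_z,K_z\bigr)}\le \exp\bigl(U(\lambda)\bigr),\qquad U(\lambda):=\int_{\gamma_1}^{\gamma_2}\ln\frac{1}{\abs{t-\lambda}}\,d\mu(t).
\end{align*}
Hence \eqref{2p:eq} follows once we exhibit some $p>1$ with $\int_X e^{pU(\lambda)}\,d\lambda<\infty$; the supremum over $\varepsilon\in(0,1]$ is then automatically controlled by this single $\varepsilon$-free integral.

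Next I would fix the exponent. Because $\omega$ is non-constant, no value is attained on a set of full Lebesgue measure, so an atom of $\mu$ at a point $t_0$ equals the Poisson measure of $\{\omega=t_0\}$, which is strictly smaller than the total mass; thus every atom of $\mu$ has mass $<1$ (one may also invoke Lemma~\ref{reprX} here). Since the atoms are summable, $a:=\sup_{t\in\R}\mu(\{t\})$ is attained, and therefore $a<1$. I then pick $p\in(1,1/a)$, next $\alpha_0\in(a,1/p)$, and finally $\delta\in(0,1)$ so small that $\mu(I)\le\alpha_0$ for every interval $I$ with $\abs I\le\delta$; this is legitimate because $\sup\{\mu(\bar I):\abs I\le\delta\}$ is non-decreasing in $\delta$ and, by a routine compactness argument, tends to $\sup_t\mu(\{t\})=a<\alpha_0$ as $\delta\downarrow0$.

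The heart of the argument is a moment estimate. Put $B_\lambda(u):=(\lambda-u,\lambda+u)$. Using $\ln x^{-1}\le(\ln x^{-1})_+$, the layer cake identity $\int(\ln\abs{t-\lambda}^{-1})_+\,d\mu(t)=\int_0^1 u^{-1}\mu(B_\lambda(u))\,du$, and the trivial bound $\mu(B_\lambda(u))\le1$ for $u\in[\delta/2,1]$, one gets $U(\lambda)\le\Phi(\lambda)+\ln(2/\delta)$, where $\Phi(\lambda):=\int_0^{\delta/2} u^{-1}\mu(B_\lambda(u))\,du\ge0$. I would then establish
\begin{align*}
\int_X\Phi(\lambda)^k\,d\lambda\le \frac{\delta}{\alpha_0}\,k!\,\alpha_0^{\,k},\qquad k\ge1,
\end{align*}
by expanding $\Phi^k$ as a $k$-fold integral over $(0,\delta/2)^k$, symmetrising onto the simplex $0<u_1<\cdots<u_k<\delta/2$ (which produces the factor $k!$), bounding $\mu(B_\lambda(u_i))\le\alpha_0$ for $i\ge2$ (legitimate, since each such interval has length $2u_i\le\delta$), applying Fubini together with $\int_X\mu(B_\lambda(u_1))\,d\lambda\le\int 2u_1\,d\mu(t)=2u_1$, and observing that the surviving ordered integral $\int_{0<u_1<\cdots<u_k<\delta/2}(u_2\cdots u_k)^{-1}\,du$ telescopes to $\delta/2$. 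Summing the exponential series and using $p\alpha_0<1$ then gives $\int_X e^{p\Phi(\lambda)}\,d\lambda\le\abs X+\frac{\delta}{\alpha_0}\sum_{k\ge1}(p\alpha_0)^{k}<\infty$, whence $\int_X e^{pU(\lambda)}\,d\lambda\le(2/\delta)^{p}\int_X e^{p\Phi(\lambda)}\,d\lambda<\infty$, which is \eqref{2p:eq}.

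The main obstacle is crossing the exponent $p=1$. A one-shot application of Jensen's inequality to the probability measure $\mu$ --- namely $e^{pU(\lambda)}\le\int\abs{t-\lambda}^{-p}\,d\mu(t)$, followed by Fubini --- only yields a finite bound for $p<1$, and splitting $\mu$ and applying Hölder does not help: the Hölder exponents would have to exceed the number of pieces while the masses of those pieces stay below $1/p$, which is impossible once $p>1$. Getting past $p=1$ genuinely uses that every atom of $\mu$ is strictly below $1$, and cashing this in efficiently is exactly the combinatorial moment bound for $\int_X\Phi^k$ with the sharp growth $k!\,\alpha_0^{k}$; proving that bound is the step that requires care.
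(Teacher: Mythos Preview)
Your proof is correct, but the paper takes a much shorter route. You reduce \eqref{2p:eq} to exponential integrability of the logarithmic potential $U(\lambda)$, then prove the latter via a layer-cake representation and a combinatorial moment bound $\int_X\Phi^k\le(\delta/\alpha_0)k!\,\alpha_0^k$; the exponent you obtain is governed by the largest atom of $\mu$. The paper instead splits the support of $\mu$ at a single point $b_0$ lying \emph{outside} the interval $X$ (say just to the right of $X$): the piece of $\mu$ on $(b_0,\gamma_2)$ produces only a bounded factor because $|t-\lambda|\ge b_0-b$ there, while the piece on $(\gamma_1,b_0)$ has total mass $\mu(b_0)<1$ by Lemma~\ref{reprX}; applying Jensen to the \emph{normalised} restriction $\mu(b_0)^{-1}d\mu$ on $(\gamma_1,b_0)$ then gives the pointwise bound $C\int|t-\lambda|^{-p\mu(b_0)}d\mu(t)$, which is integrable in $\lambda$ whenever $p\mu(b_0)<1$. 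This is exactly the ``splitting'' you dismiss in your last paragraph, but the key is that no H\"older step is needed: one of the two pieces is simply bounded, and Jensen on the other already delivers an exponent strictly below $1$.

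What each approach buys: the paper's argument is a two-line computation once the split is seen, but the admissible $p$ depends on $X$ (through the choice of $b_0$) and a small case analysis is needed to cover intervals near $\gamma_2$. Your moment method is heavier, but it yields a single $p$ determined only by the maximal atom of $\mu$, works uniformly for all compact $X$, and is a technique of independent interest (it is essentially the proof that the logarithmic potential of a measure with atoms below $1$ lies in $\exp L^p_{\mathrm{loc}}$ for some $p>1$).
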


\begin{proof} 
We proceed from Lemma~\ref{repres}.
Note first that $\gamma_{1}<\gamma_{2}$ because $\omega$ is non-constant.
Suppose that $X = [a,b]$ where $a<\gamma_{1}<b<\gamma_{2}$. 
Choose some $b_{0}\in [b,\gamma_{2}]$ and split the 
integral in \e{modpsi:eq} into 
two integrals -- over  $(\gamma_{1}, b_{0})$ and over $(b_{0},\gamma_{2}) $:
\begin{multline}\label{eq:XY}
(1-r^{2})^{p}|((T-\lambda-i\varepsilon)^{-1}\  K_z, K_z)|^p
\\
= \exp\bigg(
-  p\int_{b_{0}} ^{\gamma_{2}}
\ln \big|t - \lambda-i\varepsilon\big| d\mu(t)
\bigg)
 \exp\bigg(
-  p\int_{\gamma_{1}} ^{b_{0}}
\ln \big|t - \lambda-i\varepsilon\big| d\mu(t)
\bigg).
\end{multline}
If $t\in(b_{0},\gamma_{2})$, then 
\[
|t-\lambda -i\varepsilon|\geq t-\lambda \geq b_{0}-b \q\text{and hence}\q
-\ln |t-\lambda -i\varepsilon|\leq -\ln ( b_{0}-b).
\]
It follows that the first factor on the right-hand side of \e{eq:XY} 
is bounded by $(b_{0}-~b)^{p (\mu(b_{0})-1)}$
uniformly in $\varepsilon>0$. 

Next, consider the integral over 
$t\in(\gamma_{1}, b_{0})$. 
By Lemma~\ref{reprX}, we have $\mu(b_{0})<1$.  
Let us now apply Jensen's inequality to the normalized measure 
$\mu(b_{0})^{-1} d\mu(t)$ on $(\gamma_{1}, b_{0})$. Then
\[
 \exp\bigg(
- p \int_ {\gamma_{1}}^{b_{0}}
\ln \big|t - \lambda-i\varepsilon\big| d\mu(t )\bigg)\leq  \mu(b_{0})^{-1}
\int_ {\gamma_{1}}^{b_{0}}
\big| t - \lambda-i\varepsilon\big|^{-p \mu(b_{0})} d\mu(t ).
\]
  Therefore it follows from the equality \e{eq:XY} that
\[
\int\limits_{X} |((T-\lambda-i\varepsilon)^{-1}\ K_z, K_z)|^p d\lambda
\leq C \int\limits_{\gamma_{1}}^{b_{0}}
\bigg(\int\limits_{X}\big|t - \lambda\big|^{-p\mu(b_{0})}d\lambda\bigg) d\mu(t).
\]
The right-hand side here is finite as long as $p\mu(b_{0})<1$.

Thus, we have proved \e{2p:eq}
for $X = [a,b]$ where $a$ and $b$ are arbitrary numbers such that $a<\gamma_{1}<b<\gamma_{2}$. 
If $\gamma_{2}=\infty$, this concludes the proof. 
If $\gamma_{2}<\infty$, then we additionally have to consider intervals
$X = [a,b]$ such that $\gamma_{1}<a <\gamma_{2}< b$. 
Now we split the integral in \e{modpsi:eq} into 
two integrals over $(\gamma_1, a_{0}) $ 
where $ \gamma_{1} < a_{0}<a$ and over $(a_{0}, \gamma_{2})$.  
Similarly to the first part of the proof, 
the integral over $t\in(\gamma_1, a_{0})$ 
is bounded uniformly in $\varepsilon>0$. For $t\in( a_{0}, \gamma_2)$ 
we use the fact that $\mu(a_0)>0$ and apply Jensen's inequality again. 
\end{proof}

According to Proposition~\ref{Y:prop} it follows from
Theorem~\ref{integr:lem} that the  measures 
$(E(\ \cdot\ )K_z, K_z)$  are absolutely continuous on $\mathbb R$  
for all $z\in{\mathbb D}$. 
Therefore the measures $(E(\ \cdot\ )g, g)$ 
are also absolutely continuous  
for all $g\in\mathcal K$.  
Since the set $\mathcal K$ is dense in $\mathcal H$, we 
arrive at the following theorem.
 
\begin{theorem}\label{ac:thm}
Let the symbol $\omega$ of a Toeplitz operator $T = T(\omega)$ 
satisfy Conditions~$\ref{om:cond}$  and be non-constant.
Then the operator $T $ 
is absolutely continuous.   
\end{theorem}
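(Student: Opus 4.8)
The plan is to combine Theorem~\ref{integr:lem} with the abstract Proposition~\ref{Y:prop} and then to propagate absolute continuity from the total set $\mathcal K$ to the whole Hilbert space $\mathcal H = \mathbb H^2$. Concretely, Theorem~\ref{integr:lem} provides, for every $z\in\mathbb D$ and every compact interval $X\subset\mathbb R$, an exponent $p>1$ with
\[
\sup_{\varepsilon\in(0,1]}\int_X |((T-\lambda-i\varepsilon)^{-1}K_z,K_z)|^p\,d\lambda<\infty,
\]
and since $|\Im((T-\lambda-i\varepsilon)^{-1}K_z,K_z)|\le |((T-\lambda-i\varepsilon)^{-1}K_z,K_z)|$, Proposition~\ref{Y:prop} applies with $g=K_z$. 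Hence each scalar measure $(E(\cdot)K_z,K_z)$ is absolutely continuous on every compact interval, and therefore on all of $\mathbb R$.

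Next I would pass from the reproducing kernels $K_z$ to arbitrary finite linear combinations $g=\sum_j c_j K_{z_j}\in\mathcal K$. This is the one genuinely non-routine point, because absolute continuity of $(E(\cdot)K_{z_j},K_{z_j})$ for each $j$ does not in general imply it for a linear combination unless one controls the cross terms. The clean way around this is to note that the set of vectors $g$ for which $(E(\cdot)g,g)$ is absolutely continuous is exactly the closed subspace $\mathcal H_{\mathrm{ac}}(T)$ of absolute continuity of $T$ (a standard fact: $\mathcal H_{\mathrm{ac}}$ is a closed $T$-reducing subspace). Since each $K_z\in\mathcal H_{\mathrm{ac}}(T)$ and $\mathcal H_{\mathrm{ac}}(T)$ is a linear subspace, every $g\in\mathcal K=\mathrm{span}\{K_z:z\in\mathbb D\}$ lies in $\mathcal H_{\mathrm{ac}}(T)$; and since $\mathcal H_{\mathrm{ac}}(T)$ is closed while $\mathcal K$ is dense in $\mathbb H^2$ by \eqref{reprod:eq}, we get $\mathcal H_{\mathrm{ac}}(T)=\mathbb H^2$.

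Finally I would record the conclusion: $T$ has no singular continuous and no point spectrum, i.e.\ $T$ is absolutely continuous, which is the assertion of Theorem~\ref{ac:thm}. The hypothesis that $\omega$ is non-constant enters precisely through Lemma~\ref{reprX} (used inside Theorem~\ref{integr:lem}): the strict inequalities $0<\mu(t;z)<1$ are what make Jensen's inequality yield a finite bound with an exponent $p\mu(b_0)<1$ strictly less than $1$, and they fail when $\omega$ is constant. So the only real content is the already-proved Theorem~\ref{integr:lem}; the present theorem is then just an invocation of Proposition~\ref{Y:prop} plus the density of $\mathcal K$ and the elementary fact that $\mathcal H_{\mathrm{ac}}(T)$ is a closed subspace. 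I expect the main obstacle to be nothing more than making the $\mathcal K\to\mathbb H^2$ passage rigorous via this closed-subspace observation rather than trying to manipulate the measures $(E(\cdot)g,g)$ directly.
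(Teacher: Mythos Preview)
Your proposal is correct and follows exactly the paper's route: invoke Theorem~\ref{integr:lem}, feed it into Proposition~\ref{Y:prop} to get absolute continuity of $(E(\cdot)K_z,K_z)$ for every $z\in\mathbb D$, and then use the density of $\mathcal K$ in $\mathbb H^2$. The paper's argument is in fact just the two-sentence paragraph preceding the theorem, and your write-up is a more explicit version of it; in particular you spell out the passage from the $K_z$ to all of $\mathcal K$ via the standard fact that $\mathcal H_{\mathrm{ac}}(T)$ is a closed linear subspace, which the paper leaves tacit (and which is harmless here since $(E(X)g,g)=\|E(X)g\|^2$ makes the ``cross-term'' worry evaporate for spectral measures).
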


In passing,  
 we note that for matrix-valued analytic symbols $\omega$, 
the limiting absorption principle was established in \cite{Coju} via the Mourre method.


\subsection{Spectral family}
Our next step is to find a convenient formula 
for the spectral family $E(\lambda)$ 
of the operator $T=T(\omega)$. We proceed  from Proposition~\ref{res} and use  the following elementary but important fact.
  
\begin{lemma}[\cite{Ros2}]\label{ac}
Under Condition~$\ref{om:cond}$, the inclusion
\begin{align}\label{l1:eq}
\ln |\omega(\ \cdot\ ) - \lambda|\in L^1  ({\mathbb T}) 
\end{align}  
holds  for  a.e. $\lambda\in \mathbb R$, and for this set of the points $\lambda$, the function 
$\ln|\omega(\ \cdot\ ) -\lambda - i\varepsilon|$ converges to 
$\ln |\omega(\ \cdot\ ) - \lambda|$ in $L^1 ({\mathbb T})$ as $\varepsilon\to 0$.
\end{lemma}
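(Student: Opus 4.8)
The plan is to prove the two assertions of Lemma~\ref{ac} separately, starting with the integrability \eqref{l1:eq}.

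\textbf{Step 1: Integrability for a.e. $\lambda$.}
First I would show that $\int_{\mathbb R}\Big(\int_{\mathbb T}\big|\ln|\omega(\z)-\lambda|\big|\,d{\bf m}(\z)\Big)\,d\lambda<\infty$ when the $\lambda$-integral is restricted to any compact interval $X\subset\mathbb R$; Tonelli's theorem then gives \eqref{l1:eq} for a.e. $\lambda\in X$, and letting $X$ exhaust $\mathbb R$ finishes this part. By Fubini it suffices to bound $\int_{\mathbb T}\Big(\int_X\big|\ln|\omega(\z)-\lambda|\big|\,d\lambda\Big)d{\bf m}(\z)$. For fixed $\z$, the inner integral is $\int_X\big|\ln|\omega(\z)-\lambda|\big|\,d\lambda$, a shift of $\int_{X-\omega(\z)}\big|\ln|s|\big|\,ds$; since $\omega\in L^1(\mathbb T)$ is real and $X$ is compact, the set $X-\omega(\z)$ is contained in a fixed bounded interval up to a translation controlled by $|\omega(\z)|$, and $\big|\ln|s|\big|$ is locally integrable near $0$ with at worst logarithmic growth at infinity. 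Hence the inner integral is bounded by $C\,(1+|\omega(\z)|)\log(2+|\omega(\z)|)$, which is not quite in $L^1$ from $\omega\in L^1$ alone; the cleaner route is to note $\big|\ln|s|\big|\le |s| + \big|\ln|s|\big|\mathbbm{1}_{|s|\le 1}$, so $\int_X\big|\ln|\omega(\z)-\lambda|\big|d\lambda \le \int_X|\omega(\z)-\lambda|\,d\lambda + \int_{\mathbb R}\big|\ln|s|\big|\mathbbm{1}_{|s|\le1}\,ds \le C(1+|\omega(\z)|)$. This is in $L^1(\mathbb T)$, so the double integral is finite and Step 1 is complete.

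\textbf{Step 2: $L^1(\mathbb T)$-convergence as $\varepsilon\to0$.}
Fix $\lambda$ in the full-measure set from Step 1. I would split $\ln|\omega(\z)-\lambda-i\varepsilon| = \tfrac12\ln\big((\omega(\z)-\lambda)^2+\varepsilon^2\big)$ and compare with $\tfrac12\ln\big((\omega(\z)-\lambda)^2\big)$. Write the difference as $\tfrac12\ln\!\big(1+\varepsilon^2/(\omega(\z)-\lambda)^2\big)$, which is nonnegative and $\le \tfrac12\,\varepsilon^2/(\omega(\z)-\lambda)^2$ pointwise, so it tends to $0$ a.e.\ as $\varepsilon\to0$; but this pointwise bound is not uniformly integrable. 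Instead I would use a dominated-convergence argument with the dominating function built from Step 1: for $0<\varepsilon\le1$ one has $\big|\ln|\omega(\z)-\lambda-i\varepsilon|\big|\le \big|\ln|\omega(\z)-\lambda|\big| + \log 2$ on the set where $|\omega(\z)-\lambda|\ge\varepsilon$ is \emph{not} assumed, so more carefully: on $\{|\omega(\z)-\lambda|\ge1\}$ we have $1\le(\omega(\z)-\lambda)^2+\varepsilon^2\le 2(\omega(\z)-\lambda)^2$, giving $0\le\ln|\omega(\z)-\lambda-i\varepsilon|\le\big|\ln|\omega(\z)-\lambda|\big|+\log 2$; on $\{|\omega(\z)-\lambda|<1\}$ we have $\varepsilon^2\le(\omega(\z)-\lambda)^2+\varepsilon^2\le 2$, so $\big|\ln|\omega(\z)-\lambda-i\varepsilon|\big|\le \big|\log\varepsilon\big|+\log 2$, which is \emph{not} $\varepsilon$-uniform — this is the obstruction. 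To get around it, I would instead prove convergence via uniform integrability: the family $\{\ln|\omega(\cdot)-\lambda-i\varepsilon|\}_{0<\varepsilon\le1}$ is uniformly integrable because $\int_{\mathbb T}\big|\ln|\omega(\z)-\lambda-i\varepsilon|\big|^{1+\delta}d{\bf m}(\z)$ is bounded uniformly in $\varepsilon$ for small $\delta>0$ — this again reduces to $\int_X|s|^{-(1+\delta)\cdot 0}$-type estimates and the local integrability of $|\ln|s||^{1+\delta}$ near $0$, combined with the $|s|$-growth bound at infinity — and pointwise a.e.\ convergence (which is clear). Uniform integrability plus a.e.\ convergence yields $L^1(\mathbb T)$-convergence by Vitali's theorem.

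\textbf{Main obstacle.}
The genuine difficulty is the $L^1$-convergence near the set where $\omega(\z)=\lambda$: there the integrand $\ln|\omega(\z)-\lambda-i\varepsilon|$ is bounded (by $\log\varepsilon$-type quantities) for each $\varepsilon>0$ but the limit $\ln|\omega(\z)-\lambda|$ is unbounded there, so naive domination fails, and one must exploit that for a.e.\ $\lambda$ the level set $\{\omega=\lambda\}$ is controlled and that $\ln$ is only logarithmically singular — equivalently, one passes to an $L^{1+\delta}(\mathbb T)$ bound uniform in $\varepsilon$ and invokes Vitali/uniform integrability rather than dominated convergence. The rest is routine real-variable bookkeeping. (Since the lemma is attributed to \cite{Ros2}, an alternative is simply to cite that reference, but the argument above is self-contained.)
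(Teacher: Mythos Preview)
Your Step~1 via Fubini/Tonelli is correct and in fact more elementary than the paper's route for that part. The paper instead writes $\int_{\mathbb T}\ln(\omega(\z)-\lambda-i\varepsilon)\,d\mathbf m(\z)$ (after subtracting a convergence factor $\tfrac12\ln(\omega^2+1)$) as a Cauchy-type integral in $\lambda$, and uses a.e.\ existence of boundary values as $\varepsilon\downarrow 0$ to conclude that $\int_{\mathbb T}\ln|\omega-\lambda-i\varepsilon|\,d\mathbf m$ has a finite limit for a.e.\ $\lambda$.

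Your Step~2, on the other hand, is over-engineered and not fully carried out. You observed that the difference $\tfrac12\ln\bigl(1+\varepsilon^2/(\omega(\z)-\lambda)^2\bigr)$ is nonnegative, but did not exploit that it is also \emph{monotone decreasing} as $\varepsilon\downarrow 0$. Its value at $\varepsilon=1$, namely $\tfrac12\ln\bigl((\omega-\lambda)^2+1\bigr)-\ln|\omega-\lambda|$, is therefore an $\varepsilon$-independent majorant, and it lies in $L^1(\mathbb T)$ for a.e.\ $\lambda$: the first summand is $\le|\omega-\lambda|\in L^1$, the second is in $L^1$ by your own Step~1. Dominated convergence then gives the $L^1(\mathbb T)$ limit in one line. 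This monotonicity is exactly what the paper uses (it phrases it as the Monotone Convergence Theorem applied to $\ln|\omega-\lambda-i\varepsilon|\downarrow\ln|\omega-\lambda|$). Your Vitali/$L^{1+\delta}$ proposal can be salvaged --- one re-runs the Fubini estimate with $|\ln|s||^{1+\delta}$, which is still locally integrable near $0$ and $\le C|s|$ for $|s|\ge 1$, to get $\ln|\omega-\lambda|\in L^{1+\delta}(\mathbb T)$ for a.e.\ $\lambda$ --- but as written it is only a sketch, and it is needlessly heavy compared with the monotonicity argument.
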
  

\begin{proof}
Write
\begin{align*}
\int_{\mathbb T} \ln \big(\omega(\z) - \lambda - i\varepsilon\big)d{\mathbf m}(\z)
=   J(\lambda + i \varepsilon) +  \sigma 
\end{align*}
with
\begin{align*}
J(\lambda+i \varepsilon) =  &\ 
\int_{\mathbb T} \ln \big[
\big(\omega(\z) - \lambda - i\varepsilon\big)
- \frac{1}{2}\ln \big(\omega(\z)^2+1\big)
\big]d{\mathbf m}(\z),\\
\sigma = &\ \frac{1}{2}\int_{\mathbb T} \ln \big(\omega(\z)^2+1\big)
d{\mathbf m}(\z).
\end{align*}
Introducing the measure 
\begin{align*}
\nu(t) = \int_{\omega(\z)< t} d{\mathbf m}(\z),
\end{align*}
we can rewrite $J(\lambda+i\varepsilon)$ as 
 (see, e.g., 
\cite[\S 39, Theorem C]{Halm})
\begin{align*}
J(\lambda+i\varepsilon)
= &\ \int_{-\infty}^\infty \big[\ln \big[
\big(t - \lambda - i\varepsilon\big)
- \frac{1}{2}\ln \big(t^2+1\big)
\big]d\nu(t),\\
=&\ - \int_{-\infty}^\infty \bigg[\frac{1}{t-\lambda-i\varepsilon} 
- \frac{t}{t^2+1}\bigg] \nu(t)dt
\end{align*}
where we have integrated by parts.
The limit  as $\varepsilon\downarrow 0$ of the right-hand side exists and is finite for a.e. $\lambda\in\mathbb R$, and hence 
so does the limit
\begin{align}
\lim_{\varepsilon\downarrow 0}
{\rm Re}\,J(\lambda+ i\varepsilon)
= 
\lim_{\varepsilon\downarrow 0}
\int_{\mathbb T} \ln|\omega(\z) - \lambda - i\varepsilon| 
d{\mathbf m}(\z) -\sigma.\label{eq:monot}\end{align}
The function 
$\ln|\omega(\z)-\lambda-i\varepsilon |$ converges 
as $\varepsilon \to 0$ monotonically to $\ln|\omega(\z) - \lambda|$.
The Monotone Convergence Theorem now ensures that 
the limit on the right-hand side of \e{eq:monot} 
equals $\int_{\mathbb T} \ln|\omega(\z) - \lambda | d{\mathbf m}(\z)$
which is thus finite for a.e. $\lambda$.
\end{proof}
  

Recall that for every $z\in \mathbb D$, 
 formula \eqref{eq:FF} defines $F_\lambda(z)$ and $F_\lambda(z)^{-1}$  as  functions of $\lambda$ analytic in 
the complex plane with a cut along $[\gamma_1,  \gamma_{2}  ]$. Let us set
\begin{align*}
F_\lambda(z)^{-1} = \xi(z; \lambda) \exp\big( i A(z; \lambda)\big),
\end{align*}
where
\begin{align}\label{xi:eq}
\xi(z; \lambda) = \exp\bigg(-\frac{1}{2}\int_{\mathbb T} 
\ln|\omega(\z) - \lambda|H(z\overline\z) d{\mathbf m}(\z)\bigg)
\end{align}
and
\begin{align}\label{Ain:eq}
A(z; \lambda) 
= - \frac{1}{2} \int_{\mathbb T} 
\arg\big(\omega(\z) - \lambda \big) H(z\overline{\z}) d{\mathbf m}(\z).
\end{align}
Note that $\xi(z; \lambda) = \xi(z; \overline{\lambda})$ and $A(z; \lambda) = -A(z; \overline{\lambda})$. 
With the functions $\xi$ and $A$, 
representation \eqref{eq:resolvent}  can be rewritten  as follows:
\begin{multline}\label{resolvent1:eq}
((T-\lambda\mp i\varepsilon)^{-1} K_u, K_v)
= (1-\overline u v)^{-1} \\
\times
 \overline{\xi(u; \lambda \pm i\varepsilon)}
 \xi(v; \lambda \pm i\varepsilon) 
\exp\biggl(i\big(A(v\pm i\varepsilon) + \overline{A(u\pm i\varepsilon)}\big) 
\biggr).
\end{multline} 
 Now we can 
 describe the boundary values of the functions $\xi (z,\lambda)$ and $A (z,\lambda)$ on the cut. 
Introduce the subset (see Figure~1)
\begin{align}\label{eq:mult}
\Gamma(\lambda) = \{\z\in\mathbb T: \omega(\z) < \lambda\}
\end{align} 
of ${\mathbb T}$ 
and observe that 
\begin{align*}
\lim\limits_{\varepsilon\downarrow 0} 
\arg\,
(\omega(\z) - \lambda \pm i\varepsilon)
= 
\begin{cases}
\pm \pi, \q &\z\in \Gamma (\lambda) ,\\[0.2cm]
0, \q &\z\notin \Gamma (\lambda).
\end{cases}
\end{align*}
We emphasize that $\Gamma (\lambda)$ is defined up to a set of measure zero on ${\mathbb T}$.

The following assertion is a direct consequence of definitions \e{xi:eq},  \e{Ain:eq} and Lemma~\ref{ac}.

\begin{figure}
\resizebox{8cm}{!}{
\begin{tikzpicture}
\draw[very thick]  plot[smooth, tension=.7] coordinates {(0.5,1.5) (1.5,2.5) (2.5,2)};
\draw[very thick]  plot[smooth, tension=.7] 
coordinates {(2.5,-1.5) (3.5,-1.5) (4,-0.5) (4.5,1) (5,1.5) (5.5,1) (6,-0.5) (6.5,-1.5) (7.5,-1)};
\draw [very thick] plot[smooth, tension=.7] 
coordinates {(0.5,-1) (0,-0.5) (-0.5,1) (-1,1.5) (-2,1) (-2.5,-0.5) (-3,-1) (-4,-1)};
\draw (-4,1) -- (7.5,1)node[right]{$\lambda$};
\draw (-4,-0.5) -- (7.5,-0.5) node (v5) {};
\draw[dashed] (-2,1) -- (-2,-0.5) node (v1) {};
\draw[dashed] (-0.5,1) -- (-0.5,-0.5) node (v2) {};
\draw[dashed] (0.5,1.5) -- (0.5,-1);
\draw[dashed] (2.5,2) -- (2.5,-1.5);
\draw[dashed] (4.5,1) -- (4.5,-0.5) node (v3) {};
\draw[dashed] (5.5,1) -- (5.5,-0.5) node (v4) {}; 
\draw[double][ultra thick, blue] 
(-2,-0.5) -- (-4,-0.5);
\draw[double][ultra thick, blue] 
(-0.5,-0.5) -- (0.5,-0.5);
\draw[double][ultra thick, blue] (2.5,-0.5) --  (4.5,-0.5);
\draw[double][ultra thick, blue] 
(5.5,-0.5) -- (7.5,-0.5);
\draw[-latex] (1,-2.5) -- (-2.5,-1)node[pos=-0.25, below] {$\Gamma(\lambda)$};
\draw[-latex] (1.5,-2.5) -- (0,-1);
\draw [latex-](3.5,-1) -- (2,-2.5);
\draw[latex-] (6,-1) -- (2.5,-2.5);  
\end{tikzpicture}
}
\caption{Set $\Gamma(\lambda)$}
\end{figure}
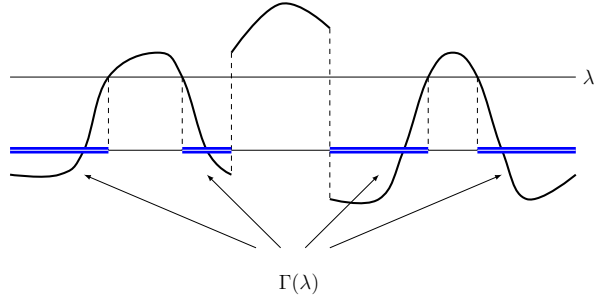


\begin{lemma}\label{limits:lem}
For a.e. $\lambda\in\mathbb R$ and all $z\in \mathbb D$, 
the functions $\xi(z; \lambda + i\varepsilon)$ 
and $A(z; \lambda + i\varepsilon)$ 
have limits as $\varepsilon\to \pm 0$. 
The limit 
\begin{align*}
\xi(z; \lambda) := \xi(z; \lambda+i0) = \xi(z; \lambda -i0)  
\end{align*}
is given by the formula \eqref{xi:eq} 
and 
\begin{align}\label{S:eq}
A(z; \lambda):= &\ A(z; \lambda+i 0) = - A(z; \lambda - i0)\notag\\
= &\ \frac{\pi}{2}\int_{\Gamma(\lambda)} H(z\overline{\z}) d{\mathbf m}(\z).
\end{align} 
 \end{lemma}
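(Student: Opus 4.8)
The plan is to pass to the limit directly in the integral representations \eqref{xi:eq} and \eqref{Ain:eq}, using Lemma~\ref{ac} for the logarithmic part and dominated convergence for the argument. First I would record the elementary fact that, for a fixed $z\in\mathbb D$, the function $\z\mapsto H(z\overline\z)$ is bounded on $\mathbb T$: since $|z\overline\z|=|z|<1$ one has $|H(z\overline\z)|\le(1+|z|)(1-|z|)^{-1}$, so $H(z\overline\z)\in L^\infty(\mathbb T)\subset L^1(\mathbb T)$. I would then fix a $\lambda\in\mathbb R$ for which the conclusions of Lemma~\ref{ac} hold and for which the level set $\{\z\in\mathbb T:\omega(\z)=\lambda\}$ is Lebesgue-null; since these level sets are pairwise disjoint, only countably many $\lambda$ fail the second requirement, so both restrictions exclude only a null set of $\lambda$.

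For $\xi$, I would start from the fact that \eqref{eq:FF} gives $\xi(z;\lambda\pm i\varepsilon)=\exp\bigl(-\tfrac12\int_{\mathbb T}\ln|\omega(\z)-\lambda\mp i\varepsilon|\,H(z\overline\z)\,d{\bf m}(\z)\bigr)$, and note that $|\omega(\z)-\lambda\mp i\varepsilon|$ is independent of the sign, so the two one-sided limits must coincide. By Lemma~\ref{ac}, $\ln|\omega(\cdot)-\lambda\mp i\varepsilon|\to\ln|\omega(\cdot)-\lambda|$ in $L^1(\mathbb T)$ as $\varepsilon\to0$; multiplying by the bounded factor $H(z\overline\z)$ and integrating preserves convergence, so the exponent tends to $-\tfrac12\int_{\mathbb T}\ln|\omega(\z)-\lambda|\,H(z\overline\z)\,d{\bf m}(\z)$ and hence $\xi(z;\lambda\pm i\varepsilon)\to\xi(z;\lambda)$ with $\xi(z;\lambda)$ as in \eqref{xi:eq}.

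For $A$, I would use the pointwise limit stated just before the lemma, namely $\arg\bigl(\omega(\z)-\lambda\mp i\varepsilon\bigr)\to\mp\pi\,\1_{\Gamma(\lambda)}(\z)$ as $\varepsilon\downarrow0$ for a.e.\ $\z\in\mathbb T$ (the only excluded points being those with $\omega(\z)=\lambda$, a null set for the chosen $\lambda$). Since $A(z;\lambda\pm i\varepsilon)=-\tfrac12\int_{\mathbb T}\arg\bigl(\omega(\z)-\lambda\mp i\varepsilon\bigr)H(z\overline\z)\,d{\bf m}(\z)$ and the integrand is bounded in modulus by $\pi|H(z\overline\z)|$ uniformly in $\varepsilon>0$ (the complex numbers $\omega(\z)-\lambda\mp i\varepsilon$ never meet the negative real axis), the Dominated Convergence Theorem gives
\[
A(z;\lambda\pm i\varepsilon)\;\longrightarrow\;-\frac12\int_{\mathbb T}\bigl(\mp\pi\bigr)\1_{\Gamma(\lambda)}(\z)\,H(z\overline\z)\,d{\bf m}(\z)\;=\;\pm\frac{\pi}{2}\int_{\Gamma(\lambda)}H(z\overline\z)\,d{\bf m}(\z),
\]
which is \eqref{S:eq} and simultaneously exhibits the relation $A(z;\lambda+i0)=-A(z;\lambda-i0)$.

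I do not expect a genuine obstacle: all the analytic substance is already in Lemma~\ref{ac}, which both guarantees \eqref{l1:eq} for a.e.\ $\lambda$ and provides the $L^1$-convergence of the logarithm, while the remaining ingredients---boundedness of $H(z\overline\z)$ on $\mathbb T$ for $z\in\mathbb D$, the elementary pointwise behaviour of $\arg$, and dominated convergence---are routine. The one point I would be careful about is the restriction to $\lambda$ with $\{\omega=\lambda\}$ null, which is exactly what is needed for $\arg(\omega(\z)-\lambda\mp i\varepsilon)$ to have an a.e.\ pointwise limit on $\mathbb T$.
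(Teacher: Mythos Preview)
Your proof is correct and is precisely the approach the paper takes: the paper simply states that the lemma ``is a direct consequence of definitions \eqref{xi:eq}, \eqref{Ain:eq} and Lemma~\ref{ac}'', and you have accurately supplied those details (boundedness of $H(z\overline\z)$, the $L^1$-convergence from Lemma~\ref{ac} for $\xi$, and dominated convergence for $A$).
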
 
The next fact follows from \eqref{resolvent1:eq} and Lemma \ref{limits:lem}. 
 
\begin{lemma}
For a.e. $\lambda\in\mathbb R$, we have the representation
\begin{align}\label{Sx:eq}
\lim\limits_{\varepsilon\downarrow 0}&\ ((T-\lambda\mp i\varepsilon)^{-1} K_u, K_v)\notag\\
= &\ (1-\overline u v)^{-1} 
 \overline{\xi(u; \lambda  )}
 \xi(v; \lambda  ) 
\exp\Big(\pm i\big(\overline{A(u; \lambda)} + A(v; \lambda)\big)\Big),
\end{align}
where $\xi(z; \lambda)$ and $A(z; \lambda)$ are given by \eqref{xi:eq} and 
\eqref{S:eq} respectively. 
\end{lemma}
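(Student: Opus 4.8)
The plan is to deduce \eqref{Sx:eq} directly from the exact identity \eqref{resolvent1:eq} by passing to the limit $\varepsilon\downarrow 0$ on its right-hand side, using the boundary values recorded in Lemma~\ref{limits:lem}. Since the right-hand side of \eqref{resolvent1:eq} is a product of the $\varepsilon$-independent factor $(1-\overline u v)^{-1}$ with the three scalar factors $\overline{\xi(u;\lambda\pm i\varepsilon)}$, $\xi(v;\lambda\pm i\varepsilon)$ and $\exp\bigl(i(A(v;\lambda\pm i\varepsilon)+\overline{A(u;\lambda\pm i\varepsilon)})\bigr)$, it is enough to take the limit in each factor and invoke continuity of multiplication, of complex conjugation, and of the exponential. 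No uniformity in $\varepsilon$ and no interchange of limit and integral are needed here, because the existence and the values of the relevant limits have already been established in Lemma~\ref{limits:lem}; the present lemma is merely a matter of assembling them.

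Concretely, I would first fix $\lambda$ in the full-measure set provided by Lemma~\ref{limits:lem} (equivalently, by Lemma~\ref{ac}), so that for every $z\in\mathbb D$ the limits of $\xi(z;\lambda+i\varepsilon)$ and $A(z;\lambda+i\varepsilon)$ as $\varepsilon\to\pm 0$ exist. For the operator $(T-\lambda-i\varepsilon)^{-1}$ (the top sign in \eqref{resolvent1:eq}, where $\xi$ and $A$ carry the argument $\lambda+i\varepsilon$), Lemma~\ref{limits:lem} gives $\xi(z;\lambda+i\varepsilon)\to\xi(z;\lambda)$ and $A(z;\lambda+i\varepsilon)\to A(z;\lambda)$ as $\varepsilon\downarrow 0$; substituting $z=u,v$, conjugating the $u$-factors and collecting terms yields exactly the $+$ version of \eqref{Sx:eq}. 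For the operator $(T-\lambda+i\varepsilon)^{-1}$ (the bottom sign, with argument $\lambda-i\varepsilon$), the lemma gives $\xi(z;\lambda-i\varepsilon)\to\xi(z;\lambda)$ and, crucially, $A(z;\lambda-i\varepsilon)\to A(z;\lambda-i0)=-A(z;\lambda)$; this sign flip turns the exponential into $\exp\bigl(-i(A(v;\lambda)+\overline{A(u;\lambda)})\bigr)$ and produces the $-$ version. In both cases the $\xi$-factors combine into $\overline{\xi(u;\lambda)}\,\xi(v;\lambda)$ and the prefactor $(1-\overline u v)^{-1}$ passes through unchanged.

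The only point requiring care — and, such as it is, the main obstacle — is keeping the two layers of sign convention straight: the $\mp$ attached to the operator in $(T-\lambda\mp i\varepsilon)^{-1}$, the corresponding $\pm$ of the spectral-parameter shift appearing inside $\xi$ and $A$ in \eqref{resolvent1:eq} (where $A(v\pm i\varepsilon)$ abbreviates $A(v;\lambda\pm i\varepsilon)$), the identity $A(z;\lambda-i0)=-A(z;\lambda+i0)$ from Lemma~\ref{limits:lem}, and the contribution $\overline{A(u;\lambda\pm i\varepsilon)}\to\pm\overline{A(u;\lambda)}$ of the conjugation. Once these are tracked consistently, the $+$ and $-$ forms of \eqref{Sx:eq} emerge simultaneously, and the proof is complete.
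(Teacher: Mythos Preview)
Your proposal is correct and follows exactly the route the paper indicates: the lemma is stated there without proof, with the remark that it follows from \eqref{resolvent1:eq} and Lemma~\ref{limits:lem}, and you have simply spelled out that passage to the limit together with the sign bookkeeping for $A(z;\lambda\pm i0)$. There is nothing to add.
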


Putting the representation
\e{Sx:eq} together with the Stone formula,  
\begin{multline*} 
2\pi i  \frac{d}{d\lambda}\big(E(\lambda) K_u, K_v\big)\\
=   \lim\limits_{\varepsilon\downarrow 0} \big(
((T-\lambda - i\varepsilon)^{-1} K_u, K_v) - 
((T-\lambda + i\varepsilon)^{-1} K_u, K_v)
\big) ,
\end{multline*}
we obtain

%

\begin{theorem}\label{specset:thm}
Suppose that $\omega$ satisfies Condition \ref{om:cond}. 
Define the functions $\xi (z; \lambda) $ and $A(z; \lambda)$ 
by formulas \e{xi:eq} and  \e{S:eq}, respectively. 
Then 
for all $u, v\in \mathbb D$ and a.e. $\lambda\in \mathbb R$, 
the spectral family $E(\lambda)$ of 
the Toeplitz operator $T$ 
satisfies the formula
\begin{equation} \label{spectral:eq}
\frac{d}{d\lambda}\big(E(\lambda) K_u, K_v\big)
= \frac{1}{\pi} (1-\overline u v)^{-1} 
 \overline{\xi(u; \lambda  )}
 \xi(v; \lambda  )  \sin \Big( \ov{A(u; \lambda)}+A(v; \lambda)\Big).
\end{equation}
\end{theorem}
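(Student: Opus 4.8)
The plan is to combine the boundary-value representation \eqref{Sx:eq} with the Stone formula, as the statement of the theorem already indicates. First I would recall that by Theorem~\ref{ac:thm} the operator $T$ is absolutely continuous, so the spectral measures $(E(\cdot)K_u,K_v)$ are absolutely continuous and the derivative $\frac{d}{d\lambda}(E(\lambda)K_u,K_v)$ exists for a.e. $\lambda$; moreover the Stone formula
\begin{equation*}
2\pi i \frac{d}{d\lambda}\big(E(\lambda)K_u,K_v\big)
= \lim_{\varepsilon\downarrow 0}\Big(
((T-\lambda-i\varepsilon)^{-1}K_u,K_v)-((T-\lambda+i\varepsilon)^{-1}K_u,K_v)\Big)
\end{equation*}
holds for a.e. $\lambda$ (it holds for all $\lambda$ in the weak/integrated sense, and the pointwise version follows once we know the boundary limits exist, which is the content of Lemma~\ref{limits:lem} and the lemma giving \eqref{Sx:eq}).

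Next I would substitute the two boundary values from \eqref{Sx:eq} into the right-hand side of the Stone formula. For the resolvent at $\lambda+i0$ (i.e. the limit of $(T-\lambda-i\varepsilon)^{-1}$) we get the factor $\exp\big(+i(\overline{A(u;\lambda)}+A(v;\lambda))\big)$, and for $\lambda-i0$ we get $\exp\big(-i(\overline{A(u;\lambda)}+A(v;\lambda))\big)$; the scalar prefactor $(1-\overline u v)^{-1}\overline{\xi(u;\lambda)}\xi(v;\lambda)$ is common to both terms since $\xi(z;\lambda+i0)=\xi(z;\lambda-i0)$ by Lemma~\ref{limits:lem}. Subtracting and using $e^{i\theta}-e^{-i\theta}=2i\sin\theta$ with $\theta=\overline{A(u;\lambda)}+A(v;\lambda)$ gives
\begin{equation*}
2\pi i\,\frac{d}{d\lambda}\big(E(\lambda)K_u,K_v\big)
= 2i\,(1-\overline u v)^{-1}\overline{\xi(u;\lambda)}\xi(v;\lambda)\,\sin\big(\overline{A(u;\lambda)}+A(v;\lambda)\big),
\end{equation*}
and dividing by $2\pi i$ yields exactly \eqref{spectral:eq}.

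Finally, I would note that the expression for $A(z;\lambda)$ on the cut is the one in \eqref{S:eq}, namely $A(z;\lambda)=\frac{\pi}{2}\int_{\Gamma(\lambda)}H(z\overline\z)\,d{\bf m}(\z)$, so the formula \eqref{spectral:eq} with $\xi$ and $A$ given by \eqref{xi:eq} and \eqref{S:eq} is precisely what is claimed. There is essentially no hard analytic step left here: all the work has been done in establishing the existence of the boundary values (Lemma~\ref{ac}, Lemma~\ref{limits:lem}) and the absolute continuity (Theorem~\ref{ac:thm}), which is what licenses passing from the integrated Stone formula to its pointwise differentiated form. The only point requiring a word of care is the justification that the a.e.-defined boundary limits of the resolvent bilinear form, together with absolute continuity, indeed give the pointwise identity for $\frac{d}{d\lambda}(E(\lambda)K_u,K_v)$ rather than merely an identity after integration against test functions; this is a standard consequence of the de la Vallée-Poussin / Fatou theory for Borel measures with $L^1_{\mathrm{loc}}$ densities, applied to the (complex) measure $(E(\cdot)K_u,K_v)$, whose Poisson-type integral is the resolvent bilinear form.
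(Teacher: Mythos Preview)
Your proposal is correct and follows exactly the paper's own argument: the paper simply states that putting the boundary-value representation \eqref{Sx:eq} together with the Stone formula yields \eqref{spectral:eq}, and you have carried out precisely this computation with the expected use of $e^{i\theta}-e^{-i\theta}=2i\sin\theta$. Your additional remarks on justifying the pointwise form of Stone's formula via absolute continuity are a helpful elaboration but do not depart from the paper's approach.
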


Recall that $\gamma_1$ and $\gamma_2$ are defined in \eqref{eq:sig}. 

\begin{corollary}\label{specset}
The  spectrum of 
the operator $T$ coincides with the interval $[\gamma_1, \gamma_2]$.
\end{corollary}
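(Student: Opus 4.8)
The plan is to deduce Corollary~\ref{specset} from Theorem~\ref{specset:thm} together with the absolute continuity (Theorem~\ref{ac:thm}) and the semi-boundedness of $T$. The inclusion $\sigma(T)\subset[\gamma_1,\gamma_2]$ is immediate: by Condition~\ref{om:cond} the form \e{tform:eq} is bounded below by $\gamma_1$, so $\sigma(T)\subset[\gamma_1,\infty)$; if $\gamma_2=\infty$ there is nothing more to check, and if $\gamma_2<\infty$ then $\omega\in L^\infty$, $T=T(\omega)=\mathbb P\,\omega\,|_{\mathbb H^2}$ is bounded with $\|T\|\le\|\omega\|_\infty\le\max(|\gamma_1|,|\gamma_2|)$, and in fact $((T-\gamma_2)f,f)=\int(\omega-\gamma_2)|f|^2\,d\mathbf m\le 0$, $((T-\gamma_1)f,f)\ge 0$, which pins the spectrum into $[\gamma_1,\gamma_2]$.

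For the reverse inclusion $[\gamma_1,\gamma_2]\subset\sigma(T)$, the idea is to show that the spectral measure gives positive mass to every neighbourhood of every $\lambda\in(\gamma_1,\gamma_2)$; since $\sigma(T)$ is closed and $T$ is absolutely continuous, it then suffices to prove that $(\gamma_1,\gamma_2)\subset\sigma(T)$. Fix $\lambda\in(\gamma_1,\gamma_2)$ outside the exceptional null set of Theorem~\ref{specset:thm} and take $u=v=z\in\mathbb D$ in \e{spectral:eq}, so that
\begin{align*}
\frac{d}{d\lambda}\big(E(\lambda)K_z,K_z\big)
= \frac{1}{\pi}\,(1-|z|^2)^{-1}\,|\xi(z;\lambda)|^2\,\sin\!\big(2\operatorname{Re}A(z;\lambda)\big).
\end{align*}
Here $(1-|z|^2)^{-1}>0$ and $|\xi(z;\lambda)|^2>0$ (it is an exponential, finite by Lemma~\ref{ac}), so the sign of the spectral density at $K_z$ is the sign of $\sin(2\operatorname{Re}A(z;\lambda))$. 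By \e{S:eq}, $\operatorname{Re}A(z;\lambda)=\frac{\pi}{2}\int_{\Gamma(\lambda)}\operatorname{Re}H(z\bar\z)\,d\mathbf m(\z)=\pi\int_{\Gamma(\lambda)}\mathcal P(r,\theta-\tau)\,d\tau=\pi\,\mu(\gamma_2;z)\!\restriction$ — more precisely it equals $\pi$ times the Poisson-average of $\1_{\Gamma(\lambda)}$, which is exactly $\mu(\lambda;z)$ in the notation of Lemma~\ref{repres}. By Lemma~\ref{reprX}, $0<\mu(\lambda;z)<1$ for $\lambda\in(\gamma_1,\gamma_2)$, hence $\operatorname{Re}A(z;\lambda)\in(0,\pi)$ and $2\operatorname{Re}A(z;\lambda)\in(0,2\pi)$.

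The main obstacle is that $\sin(2\operatorname{Re}A(z;\lambda))$ can vanish or change sign when $2\operatorname{Re}A(z;\lambda)$ passes through $\pi$, so one cannot conclude positivity of the density at a single $z$ for every $\lambda$. The remedy is to vary $z$: for fixed $\lambda\in(\gamma_1,\gamma_2)$, choose $z\in\mathbb D$ so that $\operatorname{Re}A(z;\lambda)=\pi\mu(\lambda;z)$ is close to $\pi/2$, equivalently $\mu(\lambda;z)\approx\tfrac12$ — this is possible because as $z$ ranges over $\mathbb D$ the Poisson average $\mu(\lambda;z)$ of the indicator $\1_{\Gamma(\lambda)}$ takes values throughout $(0,1)$ (letting $z=re^{i\theta}$ with $r\to1$ and $e^{i\theta}$ a Lebesgue point, $\mu(\lambda;z)\to 1$ if $e^{i\theta}\in\Gamma(\lambda)$ and $\to0$ otherwise, and $\mu$ is continuous in $z$, so by the intermediate value theorem every value in $(0,1)$ is attained). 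For such $z$ the density $\frac{d}{d\lambda}(E(\lambda)K_z,K_z)>0$ on a neighbourhood of $\lambda$, whence $(E(J)K_z,K_z)>0$ for every open $J\ni\lambda$, so $E(J)\neq0$ and $\lambda\in\sigma(T)$. Since $\sigma(T)$ is closed this yields $[\gamma_1,\gamma_2]\subset\sigma(T)$, and combined with the first paragraph completes the proof.
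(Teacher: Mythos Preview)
Your proof reaches the right conclusion, but it contains an arithmetic slip that forces you into an unnecessary detour. In your computation of $\operatorname{Re}A(z;\lambda)$ you dropped a factor of $2$: from \eqref{Poisson:eq} one has $\operatorname{Re}H=2\pi\mathcal P$, and $d\mathbf m(\z)=(2\pi)^{-1}d\tau$, so
\[
\operatorname{Re}A(z;\lambda)=\frac{\pi}{2}\int_{\Gamma(\lambda)}\operatorname{Re}H(z\bar\z)\,d\mathbf m(\z)
=\frac{\pi}{2}\int_{\Gamma(\lambda)}\mathcal P(r,\theta-\tau)\,d\tau
=\frac{\pi}{2}\,\mu(\lambda;z),
\]
not $\pi\mu(\lambda;z)$. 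Hence $2\operatorname{Re}A(z;\lambda)=\pi\mu(\lambda;z)\in(0,\pi)$ by Lemma~\ref{reprX}, and $\sin\big(2\operatorname{Re}A(z;\lambda)\big)>0$ for \emph{every} $z\in\mathbb D$ and a.e.\ $\lambda\in(\gamma_1,\gamma_2)$. The whole intermediate-value argument for choosing $z$ with $\mu(\lambda;z)\approx\tfrac12$, and the subsequent discussion of continuity in $\lambda$, are therefore superfluous.

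The paper's proof is the streamlined version of this: it simply takes $u=v=0$. Then $H(0)=1$, so $A(0;\lambda)=\tfrac{\pi}{2}\mathbf m(\Gamma(\lambda))$ is real, and \eqref{spectral:eq} gives
\[
\frac{d(E(\lambda)K_0,K_0)}{d\lambda}=\frac{1}{\pi}\,|\xi(0;\lambda)|^2\sin\big(\pi\mathbf m(\Gamma(\lambda))\big)>0,
\]
since $0<\mathbf m(\Gamma(\lambda))<1$ for a.e.\ $\lambda\in(\gamma_1,\gamma_2)$ by the very definition of $\gamma_1,\gamma_2$. This single vector $K_0$ already witnesses that every point of $(\gamma_1,\gamma_2)$ lies in $\sigma(T)$, with no need to vary $z$.
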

  
\begin{proof}  
By the definition \eqref{tform:eq}, we have
\[
\gamma_1 \| f\|^{2}\leq T[f,f]\leq \gamma_2 \| f\|^{2},
\]
whence $\sigma(T)\subset[\gamma_1, \gamma_2]$. 
Conversely, it follows from \e{S:eq} that  
\begin{align*}
A(0; \lambda) = \frac{\pi}{2}{\bf m} (\Gamma (\lambda) ) 
\end{align*}
for a.e. $\lambda\in (\gamma_1, \gamma_2)$. 
By \eqref{spectral:eq}, this means that 
\begin{align*}
\frac{d(E(\lambda)K_0, K_0)}{d\lambda}
=  \frac{1}{\pi}  | \xi (0; \lambda)|^2  
\sin \big(\pi {\bf m} (\Gamma (\lambda) ) 
\big) >0
\end{align*}
because $0<{\bf m} (\Gamma(\lambda )) <1$.
Consequently, $\lambda\in \sigma(T) $ so that $ [\gamma_1, \gamma_2]\subset \sigma(T)$.
\end{proof}

\section{ Toeplitz operators with finite spectral multiplicity}\label{diag:sect}
 
\subsection{Auxiliary functions} 
Here we fix an interval $\Lambda\subset (\gamma_1, \gamma_2)$ and assume the following condition.
 
\begin{cond}\label{mult:cond}
For  a. e.  $\lambda \in\Lambda $, the set $\Gamma(\lambda)$ defined by \eqref{eq:mult} 
is a union of finitely many 
open arcs, whose closures are pairwise disjoint:
\begin{align}\label{gammaf:eq}
 \Gamma(\lambda)=\bigcup_{j=1}^{m} \; (\alpha_j(\lambda), \beta_j(\lambda)), \q m<\infty.
 \end{align}
 \end{cond} 
 
 Our goal is to diagonalize the operator $T$ using formula \eqref{spectral:eq}. 
This will be done locally, on the interval $\Lambda$. 
In particular, we will see that the spectral multiplicity of the Toeplitz operator $T=T(\omega)$ on 
 $\Lambda$ equals $m$.
 
First we calculate the function $A(z; \lambda)$ defined by \eqref{S:eq}. 
We often omit the dependence of  various objects on $\lambda$.

\begin{lemma}
Under  Condition~\ref{mult:cond}, for all $z\in\mathbb D$ and a.e. 
$\lambda\in \Lambda$, we have the representation
\begin{align}\label{Sf:eq}
A(z;\lambda) = \frac{\pi}{ 2}{\bf m }(\Gamma(\lambda))
 + \frac{i}{ 2}\sum_{j=1}^{m}
\ln  \frac{1- z\ov{ \alpha_j(\lambda)} }{1- z \ov{\beta_j(\lambda)}},
\end{align}
where the function   $\ln(1 + u)$ is analytic for $u\in {\mathbb D}$ and
$\ln 1=0$.
\end{lemma}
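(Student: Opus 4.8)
The plan is to compute the integral in \eqref{S:eq} explicitly using the arc decomposition \eqref{gammaf:eq}. Starting from
\[
A(z;\lambda) = \frac{\pi}{2}\int_{\Gamma(\lambda)} H(z\overline\z)\, d{\mathbf m}(\z)
= \frac{\pi}{2}\sum_{j=1}^m \int_{(\alpha_j,\beta_j)} H(z\overline\z)\, d{\mathbf m}(\z),
\]
I would split $H(w) = \frac{1+w}{1-w} = -1 + \frac{2}{1-w}$, so that $H(z\overline\z) = -1 + 2 K_{\z}(z)$ in the notation of \eqref{eq:KK} (recalling $K_u(z) = (1-\overline u z)^{-1}$, so $2(1-\overline\z z)^{-1} = 2 K_\z(z)$). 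The contribution of the $-1$ term, summed over all arcs, produces $-\frac{\pi}{2}{\mathbf m}(\Gamma(\lambda))$; wait — since $\int_{\Gamma}(-1)\,d{\mathbf m} = -{\mathbf m}(\Gamma)$ and the prefactor is $\pi/2$, more carefully $H = 1 + \frac{2w}{1-w}$ is the cleaner split: $H(z\overline\z) = 1 + \frac{2z\overline\z}{1-z\overline\z}$, so the constant term contributes exactly $\frac{\pi}{2}{\mathbf m}(\Gamma(\lambda))$, matching the first term on the right of \eqref{Sf:eq}.

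Next I would handle the remaining term $\frac{\pi}{2}\sum_j \int_{(\alpha_j,\beta_j)} \frac{2z\overline\z}{1-z\overline\z}\, d{\mathbf m}(\z)$. Parametrizing $\z = e^{i\tau}$ and using $d{\mathbf m}(e^{i\tau}) = \frac{d\tau}{2\pi}$, the integrand $\frac{z e^{-i\tau}}{1 - z e^{-i\tau}}$ is recognized as $-\frac{d}{d\tau}\big(\ln(1 - z e^{-i\tau})\big) \cdot \frac{1}{i}$ — indeed $\frac{d}{d\tau}\ln(1-ze^{-i\tau}) = \frac{i z e^{-i\tau}}{1-ze^{-i\tau}}$. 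Hence on each arc the integral telescopes to a difference of logarithms evaluated at the endpoints $\beta_j$ and $\alpha_j$, and after bookkeeping the factors of $\pi$, $2$, $2\pi$ and $i$ one arrives at
\[
\frac{i}{2}\sum_{j=1}^m \ln\frac{1 - z\overline{\alpha_j}}{1 - z\overline{\beta_j}},
\]
which is the second term in \eqref{Sf:eq}. I would note that since $z\in\mathbb D$ and $\alpha_j,\beta_j\in\mathbb T$, the arguments $1 - z\overline{\alpha_j}$ and $1 - z\overline{\beta_j}$ lie in the right half-plane $\Re(\cdot) > 0$, so the principal branch of $\ln(1+u)$, $u\in\mathbb D$, with $\ln 1 = 0$ is unambiguous, and the antiderivative used along each arc is single-valued — this justifies the telescoping without monodromy issues.

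The main obstacle is bookkeeping: one must be careful that the endpoints $\alpha_j,\beta_j$ are the \emph{boundary} values of the arcs taken in the counterclockwise orientation consistent with the measure $d{\mathbf m}$, so that in the telescoped sum the upper endpoint $\beta_j$ contributes with the opposite sign to the lower endpoint $\alpha_j$ — this is exactly what puts $\overline{\alpha_j}$ in the numerator and $\overline{\beta_j}$ in the denominator. A secondary subtlety is that \eqref{S:eq} holds only for a.e.\ $\lambda$ (those for which Lemma~\ref{limits:lem} applies), and Condition~\ref{mult:cond} is likewise an a.e.\ statement, so the conclusion \eqref{Sf:eq} is asserted for a.e.\ $\lambda\in\Lambda$; I would simply intersect the two full-measure sets. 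Everything else is a routine computation of an elementary integral.
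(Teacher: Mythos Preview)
Your proposal is correct and follows essentially the same route as the paper. Both arguments reduce to computing $\int_{(\alpha,\beta)} H(z\overline\z)\,d{\mathbf m}(\z)$ for a single arc: the paper states the resulting identity \eqref{intdm:eq} and verifies it by observing that both sides vanish at $a=b$ and have equal $b$-derivatives, while you compute the antiderivative explicitly via the split $H(w)=1+\tfrac{2w}{1-w}$ and $\tfrac{d}{d\tau}\ln(1-ze^{-i\tau})=\tfrac{ize^{-i\tau}}{1-ze^{-i\tau}}$ --- this is the same elementary calculation seen from the other end.
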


\begin{proof} 
 It suffices to check \e{Sf:eq} for the case when 
 $\Gamma$ consists of only one arc and 
 then to take the sum of the  results obtained.  
 Let $\Gamma= (\alpha,\beta)$, $\alpha=e^{ia}$, $\beta =e^{ib}$ where $0<a<b<2\pi$. We have to check that
\begin{align}\label{intdm:eq}
\int_{a}^{b}\frac{e^{i\theta}+z}{e^{i\theta}-z}d\theta 
= b-a + 2 i \ln
  \frac{1- z e^{-ia}}{1- z e^{-ib}}.
\end{align}
The easiest way to do it is to 
observe that both sides equal zero for $a=b$ and 
that their derivatives, for example, in the variable $b$, coincide.
\end{proof}

\begin{corollary}
For all $z\in \mathbb D$ and a.e. $\lambda\in \Lambda$,  
\begin{align} 
L(z; \lambda) := & i \pi^{-1}\exp \big(-2 iA(z; \lambda)\big)
\nonumber\\
=& i \pi^{-1}\exp\big( -\pi i {\bf m}(\Gamma(\lambda))\big)
\prod\limits_{j=1}^m  
\frac{1- z\ov{ \alpha_j(\lambda)} }{1- z \ov{\beta_j(\lambda)}}.
\label{repr2:eq} 
\end{align}
\end{corollary}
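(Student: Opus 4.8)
The plan is to read off the corollary directly from the preceding lemma. One substitutes the closed form \eqref{Sf:eq} for $A(z;\lambda)$ into the definition $L(z;\lambda)=i\pi^{-1}\exp\!\big(-2iA(z;\lambda)\big)$. Multiplying \eqref{Sf:eq} by $-2i$ and using $-2i\cdot\tfrac{i}{2}=1$ gives
\[
-2iA(z;\lambda)= -\pi i\,{\bf m}(\Gamma(\lambda)) + \sum_{j=1}^{m}\ln\frac{1-z\ov{\alpha_j(\lambda)}}{1-z\ov{\beta_j(\lambda)}},
\]
so that exponentiation turns the sum into a product and yields \eqref{repr2:eq} after multiplication by $i\pi^{-1}$.

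The computation is purely algebraic; the one point deserving a word is the step from $\exp$ of the logarithmic term back to the quotient itself, i.e. that the branch of $\ln$ fixed in the lemma --- the analytic branch of $\ln(1+u)$ on $\mathbb D$ with $\ln 1=0$, applied with $u=-z\ov{\alpha_j(\lambda)}$ and $u=-z\ov{\beta_j(\lambda)}$ --- is consistent with ordinary exponentiation. This is automatic: for $z\in\mathbb D$ and $\alpha_j(\lambda),\beta_j(\lambda)\in\mathbb T$ the points $z\ov{\alpha_j(\lambda)}$ and $z\ov{\beta_j(\lambda)}$ lie in $\mathbb D$, hence $1-z\ov{\alpha_j(\lambda)}$ and $1-z\ov{\beta_j(\lambda)}$ lie in $\{w:|w-1|<1\}$ and therefore satisfy $\exp\!\big(\ln(1-z\ov{\alpha_j(\lambda)})\big)=1-z\ov{\alpha_j(\lambda)}$ (and likewise for $\beta_j(\lambda)$), with the quotient well defined and non-vanishing. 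No analytic continuation beyond what the lemma already uses is needed.

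Since \eqref{Sf:eq} is valid for all $z\in\mathbb D$ and a.e. $\lambda\in\Lambda$, so is \eqref{repr2:eq}, which is the claim. There is essentially no obstacle here; the identity is recorded separately only because $L(z;\lambda)$ --- the product of the unimodular scalar $i\pi^{-1}\exp\!\big(-\pi i\,{\bf m}(\Gamma(\lambda))\big)$ with a rational function of $z$ that is holomorphic and zero-free on $\mathbb D$ --- is the convenient object entering the subsequent diagonalization of $T$ on $\Lambda$.
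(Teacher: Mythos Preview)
Your proof is correct and is exactly the (implicit) argument the paper has in mind: the corollary is stated without proof precisely because it is obtained by substituting \eqref{Sf:eq} into the definition of $L(z;\lambda)$ and exponentiating, as you do. One tiny quibble in your closing remark: the scalar $i\pi^{-1}\exp\!\big(-\pi i\,{\bf m}(\Gamma(\lambda))\big)$ has modulus $\pi^{-1}$, not $1$, so ``unimodular'' is a slip; this does not affect the argument.
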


In particular, we see that $L(z; \lambda)$ is an analytic function of  $z$ in the  complex plane $\mathbb C$ with simple poles at the points
$\beta_j(\lambda)$, $j=1,\ldots, m$.

Let us collect together some elementary  identities needed below.

\begin{lemma}
Let  $\Gamma = (\alpha, \beta)\subset {\mathbb T}$.  Then
\begin{equation}
\beta  \overline{\alpha} = \exp(2\pi i \mathbf m(\Gamma)),  
\label{eq:ML}\end{equation}
\begin{align}
\label{od1:eq}
i  e^{-\pi i{\bf m}(\Gamma)} (1-\beta \overline{\alpha}) = |\beta-\alpha|
\end{align}
and
\begin{align}
\label{od:eq}
e^{-\pi i{\bf m}(\Gamma)} (1-\z \overline{\alpha})(1-\z \overline{\beta})^{-1}
=  |\z-\alpha| \, |\z-\beta|^{-1}
\end{align}
for any $\z\notin (\alpha, \beta)$.
\end{lemma}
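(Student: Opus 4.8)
The plan is to reduce all three identities to elementary trigonometry after a convenient choice of arguments for the endpoints of the arc. Write $\alpha = e^{ia}$ and $\beta = e^{ib}$, choosing the representatives $a,b$ so that $0 < b-a < 2\pi$; this is legitimate precisely because $(\alpha,\beta)$ denotes the arc traversed counterclockwise from $\alpha$ to $\beta$, and then ${\bf m}(\Gamma) = (b-a)/(2\pi)$ by the normalization of the measure $d{\bf m}$. With this notation \eqref{eq:ML} is immediate, since $\beta\overline\alpha = e^{i(b-a)} = e^{2\pi i {\bf m}(\Gamma)}$.

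For the remaining identities I would use the elementary factorization
\[
1 - e^{i\phi} = -2i\, e^{i\phi/2}\sin(\phi/2), \qquad \phi\in\mathbb R ,
\]
together with $|1 - e^{i\phi}| = 2|\sin(\phi/2)|$. Applying this with $\phi = b-a$ and using $e^{-\pi i {\bf m}(\Gamma)} = e^{-i(b-a)/2}$ gives
\[
i\, e^{-\pi i {\bf m}(\Gamma)}(1 - \beta\overline\alpha)
= i\, e^{-i(b-a)/2}\bigl(-2i\, e^{i(b-a)/2}\bigr)\sin\tfrac{b-a}{2}
= 2\sin\tfrac{b-a}{2},
\]
while $|\beta - \alpha| = |e^{i(b-a)} - 1| = 2|\sin\tfrac{b-a}{2}|$; since $0 < (b-a)/2 < \pi$ the sine is positive, so the two sides coincide, which is \eqref{od1:eq}.

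For \eqref{od:eq} I would set $\zeta = e^{i\theta}$ and apply the same factorization to $1 - \zeta\overline\alpha = 1 - e^{i(\theta-a)}$ and $1 - \zeta\overline\beta = 1 - e^{i(\theta-b)}$. The half-angle exponential prefactors contribute $e^{i(\theta-a)/2}/e^{i(\theta-b)/2} = e^{i(b-a)/2}$, which is cancelled exactly by the factor $e^{-\pi i {\bf m}(\Gamma)} = e^{-i(b-a)/2}$, leaving
\[
e^{-\pi i {\bf m}(\Gamma)}\,\frac{1 - \zeta\overline\alpha}{1 - \zeta\overline\beta}
= \frac{\sin((\theta-a)/2)}{\sin((\theta-b)/2)} ,
\]
whereas the right-hand side of \eqref{od:eq} equals $|\sin((\theta-a)/2)|\,|\sin((\theta-b)/2)|^{-1}$. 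It then only remains to check the sign: the condition $\zeta\notin(\alpha,\beta)$ lets us pick the representative $\theta$ with $b \le \theta \le a+2\pi$, and then both $(\theta-a)/2$ and $(\theta-b)/2$ lie in $[0,\pi]$, so both sines are nonnegative and the quotient equals its modulus.

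There is essentially no hard step; the only points needing a little care are (i) fixing the branch of the arguments $a,b,\theta$ consistently with the counterclockwise orientation of the arcs, so that ${\bf m}(\Gamma) = (b-a)/(2\pi)\in(0,1)$ and the relevant half-angles stay in $[0,\pi]$, and (ii) reading \eqref{od:eq} at the endpoint $\zeta=\beta$ in $[0,+\infty]$ (equivalently, restricting to $\zeta$ in the open complementary arc, the endpoint $\zeta=\alpha$ being the trivial case $0=0$).
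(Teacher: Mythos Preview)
Your proof is correct and follows essentially the same approach as the paper: both parametrize $\alpha=e^{ia}$, $\beta=e^{ib}$ and reduce everything to elementary trigonometry. The only organizational difference is in \eqref{od:eq}: the paper obtains it by applying the already-proved identity \eqref{od1:eq} to the two arcs $(\alpha,\zeta)$ and $(\beta,\zeta)$ and using $\mathbf m(\alpha,\zeta)=\mathbf m(\alpha,\beta)+\mathbf m(\beta,\zeta)$, whereas you redo the half-angle factorization directly; both routes are equally short and the sign analysis you give is exactly what is needed.
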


\begin{proof}
Let $\alpha=e^{ia}$, $\beta=e^{ib}$. Then, by the definition of the measure $\mathbf m(\Gamma)$, both sides of 
  \e{eq:ML} equal $ e^{i(b-a)}$.

According to \e{eq:ML} the left-hand side of \e{od1:eq} equals 
\[
2\sin \pi {\bf m}(\Gamma)=2\sin \frac{b-a}{2}=| e^{i(b-a)}-1|
\]
which coincides with its right-hand side.

Finally, \e{od:eq} follows from \e{od1:eq} if we apply it to the pairs $\alpha,\z$ and $\beta,\z$ instead of the pair $\alpha, \beta$ and take into account that
$(\alpha,\z)=(\alpha, \beta)\cup [\beta, \z)$.
 \end{proof}

  Recall that  $H(z)$ is the Schwarz kernel defined by formula
\e{eq:HH}.

\begin{lemma}
 For all $z\in\mathbb C$ and a.e. $\lambda \in \Lambda$, the function \e{repr2:eq}  admits a representation 
\begin{align}
L(z; \lambda) =  \sum\limits_{j=1}^m c_j (\lambda) H\big(z \ov{\beta_j (\lambda) }\big)
+ \frac{i}{\pi} \cos\big(\pi {\bf m}(\Gamma(\lambda) \big),
\label{repr1:eq}
\end{align}
where  
\begin{align}\label{cj:eq}
c_j (\lambda)= \frac{1}{2\pi}\Big(\prod_{l= 1}^m 
|\beta_l (\lambda)-\alpha_l (\lambda)|\Big)\, \Big(\prod_{l\not = j}|\beta_j (\lambda)-\beta_l (\lambda)|^{-1}\Big)> 0,
\q j = 1, 2, \dots, m.
\end{align}
\end{lemma}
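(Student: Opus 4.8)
The plan is to start from the product representation \e{repr2:eq}, namely
\[
L(z;\lambda) = \frac{i}{\pi}\, e^{-\pi i {\bf m}(\Gamma(\lambda))}
\prod_{j=1}^m \frac{1-z\ov{\alpha_j(\lambda)}}{1-z\ov{\beta_j(\lambda)}},
\]
and expand the rational function of $z$ on the right into partial fractions. Since the numerator and denominator are both monic of degree $m$ in $z$ (up to constants depending on the $\alpha_j,\beta_j$), the ratio $\prod_j (1-z\ov{\alpha_j})/(1-z\ov{\beta_j})$ is a proper rational function plus a constant, and each pole $z=\beta_j$ (recall $|\beta_j|=1$, so $\ov{\beta_j}^{-1}=\beta_j$) is simple because Condition~\ref{mult:cond} guarantees the closures of the arcs are pairwise disjoint, hence the $\beta_j$ are distinct from one another and from all $\alpha_l$. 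Writing $H(z\ov{\beta_j}) = (1+z\ov{\beta_j})/(1-z\ov{\beta_j}) = -1 + 2/(1-z\ov{\beta_j})$, one sees that a linear combination $\sum_j c_j H(z\ov{\beta_j})$ plus a constant is exactly the general form of such a partial-fraction decomposition, so it remains only to identify the constant and the residues $c_j$.

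The concrete steps: first, evaluate $L(z;\lambda)$ at $z=0$ and as $z\to\infty$. At $z=0$ the product is $1$, giving $L(0;\lambda)=\tfrac{i}{\pi}e^{-\pi i {\bf m}(\Gamma)}$; as $z\to\infty$ the product tends to $\prod_j \ov{\alpha_j}/\ov{\beta_j} = \prod_j \ov{\alpha_j}\beta_j^{-1}$. Comparing with the proposed right-hand side \e{repr1:eq}, which at $z=0$ equals $\sum_j c_j + \tfrac{i}{\pi}\cos(\pi {\bf m}(\Gamma))$ (since $H(0)=1$) and at $z\to\infty$ equals $-\sum_j c_j + \tfrac{i}{\pi}\cos(\pi{\bf m}(\Gamma))$ (since $H\to -1$), we get two scalar relations; subtracting them shows $\sum_j c_j = \tfrac{i}{\pi}\Im(e^{-\pi i{\bf m}(\Gamma)})\cdot(\text{sign})$, i.e. fixes the constant term as the imaginary-cancelling part, $\tfrac{i}{\pi}\cos(\pi{\bf m}(\Gamma))$, consistent with $e^{-\pi i{\bf m}(\Gamma)} = \cos(\pi{\bf m}(\Gamma)) - i\sin(\pi{\bf m}(\Gamma))$. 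Second, and more importantly, compute the residue of $L(z;\lambda)$ at the pole $z=\beta_k$: from the product formula, $\lim_{z\to\beta_k}(1-z\ov{\beta_k})L(z;\lambda) = \tfrac{i}{\pi}e^{-\pi i{\bf m}(\Gamma)}(1-\beta_k\ov{\alpha_k})\prod_{j\ne k}\frac{1-\beta_k\ov{\alpha_j}}{1-\beta_k\ov{\beta_j}}$, while from \e{repr1:eq} the same limit equals $2c_k$ (using $H(z\ov{\beta_k}) \sim 2/(1-z\ov{\beta_k})$ near the pole). Third, simplify this residue using the elementary identities already proved: apply \e{od1:eq} to the pair $(\alpha_k,\beta_k)$ — but here one must be careful, since $e^{-\pi i{\bf m}(\Gamma)}$ involves the whole of $\Gamma$, not just the $k$-th arc. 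The clean way is instead to write $e^{-\pi i{\bf m}(\Gamma)} = \prod_{j=1}^m e^{-\pi i {\bf m}(\Gamma_j)}$ where $\Gamma_j=(\alpha_j,\beta_j)$, distribute one factor $e^{-\pi i{\bf m}(\Gamma_k)}$ against $(1-\beta_k\ov{\alpha_k})$ via \e{od1:eq} to produce $-i|\beta_k-\alpha_k|$, and distribute the remaining factors $e^{-\pi i{\bf m}(\Gamma_j)}$, $j\ne k$, against the quotients $(1-\beta_k\ov{\alpha_j})(1-\beta_k\ov{\beta_j})^{-1}$ via \e{od:eq} (legitimate since $\beta_k\notin(\alpha_j,\beta_j)$ by disjointness of the arc closures) to produce $|\beta_k-\alpha_j|\,|\beta_k-\beta_j|^{-1}$. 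Collecting the $i$'s: $\tfrac{i}{\pi}\cdot(-i) = \tfrac1\pi$, so $2c_k = \tfrac1\pi \prod_{j}|\beta_j-\alpha_j| \cdot \prod_{j\ne k}|\beta_k-\beta_j|^{-1}$, giving exactly \e{cj:eq}, and in particular $c_k>0$.

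The main obstacle I anticipate is the bookkeeping in the residue computation — specifically, correctly matching each factor $e^{-\pi i{\bf m}(\Gamma_j)}$ to the right pair to apply \e{od1:eq} or \e{od:eq}, and tracking the powers of $i$ so that the final answer comes out real and positive. A secondary subtlety is justifying that the partial-fraction ansatz \e{repr1:eq} has no missing terms: this needs the observation that $L(z;\lambda)$ is a proper rational function plus a constant (degree of numerator $=$ degree of denominator after clearing the constants $\ov{\alpha_j},\ov{\beta_j}$), together with simplicity of each pole, both of which follow from Condition~\ref{mult:cond}. Once the ansatz is validated, identifying the constant term by the $z=0$ and $z\to\infty$ limits (or equivalently by noting $L(z;\lambda) - \sum_j c_j H(z\ov{\beta_j})$ is entire and bounded, hence constant, then evaluating at one point) is routine. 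I would present it as: (1) the ansatz is a consequence of partial fractions; (2) compute residues to get \e{cj:eq}; (3) compute the constant by evaluating at $z=0$; then (4) check positivity of $c_j$ is immediate from \e{cj:eq}.
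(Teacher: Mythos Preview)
Your proposal is correct and follows essentially the same route as the paper: write $L(z;\lambda)$ as a linear combination of the Schwarz kernels $H(z\ov{\beta_j})$ plus a constant (partial fractions, using the simplicity and distinctness of the poles guaranteed by Condition~\ref{mult:cond}), identify the coefficients $c_j$ by computing residues and simplifying via the identities \e{od1:eq} and \e{od:eq}, and pin down the additive constant from the values $L(0)$ and $L(\infty)$. The paper organizes the computation in the same order; your only variation is phrasing the residue as $\lim_{z\to\beta_k}(1-z\ov{\beta_k})L(z)$ rather than via $\operatorname{Res}_{z=\beta_k}$, which is of course equivalent.

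One small bookkeeping point (exactly the obstacle you anticipated): your intermediate step correctly produces the factors $|\beta_k-\alpha_j|\,|\beta_k-\beta_j|^{-1}$ for $j\ne k$ from \e{od:eq}, so when you collect the product over $j$ you should get $\prod_{j}|\beta_k-\alpha_j|$, not $\prod_j|\beta_j-\alpha_j|$; make sure the indices are consistent when you write up the final formula.
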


\begin{proof} 
First we note that
\begin{equation}
L(z) = \sum_{j=1}^m c_j H (z\bar{\beta_j}) + a 
\label{eq:LH}\end{equation}
with some complex constants   $c_j $, $ j=1, 2, \dots, m$, and $a$. 
Indeed,  both sides of equality \e{eq:LH} are rational 
functions with the same simple poles at the points 
$\beta_{j}$ and both of them have finite limits at infinity. We have to show that $c_{j}$ is given by 
\eqref{cj:eq} and find an expression for the constant $a$.

The residue of the right-hand side of 
\e{eq:LH} at the point $z=\beta_j$ equals $-2 c_{j} \beta_{j}$. 
Calculating the  residue of the function  \e{repr2:eq} at this point
and using equality \e{eq:LH}, we find that
\begin{equation}
2 c_{j} =   i \pi^{-1}\exp\big( -\pi i {\bf m}(\Gamma)\big)(1- \beta_{j}\ov{ \alpha_j })
\prod\limits_{l\neq j} 
\frac{1- \beta_{j}\ov{ \alpha_l } }{1- \beta_{j} \ov{\beta_l }}.
\label{eq:LHx1}\end{equation}
Put $\Gamma_{j}= (\alpha_{j}, \beta_{j})$. According to \e{od1:eq} we have
\[
i(1-\beta_{j} \overline{\alpha_{j}}) = e^{\pi i{\bf m}(\Gamma_{j})} |\beta_{j}-\alpha_{j}|,
\]
and it follows from \e{od:eq} for $\alpha=\alpha_{l}$, $\beta=\beta_{l}$ and 
$\z=\beta_{j}, j\not = l,$ that
\[
  (1-\beta_{j} \overline{\alpha_{l}}) (1-\beta_{j} \overline{\beta_{l}})^{-1}
= e^{\pi i{\bf m}(\Gamma_{l})} |\beta_{l}-\alpha_{l}| \, |\beta_{j}-\beta_{l}|^{-1}.
\]
Substituting these expressions into the right-hand side of \e{eq:LHx1} and using that 
\[
{\bf m}(\Gamma)={\bf m}(\Gamma_1) +\cdots + {\bf m}(\Gamma_m), 
\]
we obtain formula \e{cj:eq}
for the coefficients $c_{j}$.

Equality \eqref{eq:LH} implies that
\begin{equation*}
L(0) = \sum_{j=1}^{m} c_j + a \q \mbox{and}\q L(\infty) = -\sum_{j=1}^{m} c_j + a,
\end{equation*}
whence
\[
2a= L(0)+L(\infty)  .
\]
It follows from \e{repr2:eq} that 
\[
L(0)=\frac{i}{\pi}e^{-\pi i{\bf m}(\Gamma)} \q \mbox{and}\q L(\infty)
= \frac{i}{\pi}e^{-\pi i{\bf m}(\Gamma)} \prod_{j=1}^{m}\beta_{j}\overline{\alpha_{j}} =\frac{i}{\pi}e^{\pi i{\bf m}(\Gamma)}
\]
where at the last step we used equality \eqref{eq:ML}. Therefore
\[
a=\frac{i}{\pi} \cos\big(\pi {\bf m}(\Gamma  \big).
\]
Substituting this expression into  \eqref{eq:LH}, we conclude the proof of \eqref{repr1:eq}.
\end{proof}

\subsection{Spectral family and eigenfunctions}

Now we are in a position to introduce eigenfunctions of Toeplitz operators and 
to rewrite Theorem~\ref{specset:thm} in their terms. 

 \begin{theorem}\label{multiple:thm}
 Let $\omega$ satisfy   Condition~\ref{om:cond}, and 
 let Condition~\ref{mult:cond} be satisfied on some interval $\Lambda\subset
 (\gamma_{1}, \gamma_2)$.
 For  $j=1,\ldots,m$ and
  a.e. $\lambda\in \Lambda$,
denote
\begin{equation}
\varphi_j(z; \lambda) = \rho_j (\lambda)  \xi(z; \lambda)
\big(1-z\ov{\beta_j (\lambda)}\big)^{-1}
 \prod_{l=1}^m \big(1-z \ov{\alpha_l (\lambda)}\big)^{-\frac{1}{2}}
\big(1-z\ov{\beta_{l}(\lambda)}\big)^{\frac{1}{2}},
\label{eq:EL2}
\end{equation} 
where
 the function $\xi(z; \lambda)$ is defined 
by \eqref{xi:eq} and $  \rho_j(\lambda) = \sqrt{c_j(\lambda)}$ with the numbers $c_j(\lambda)$ 
given by \eqref{cj:eq}. Then
\begin{equation}
\frac{d\big(E(\lambda) K_u, K_v\big) }{d\lambda}
= \sum_{j=1}^{m} 
\overline{\varphi_j(u;\lambda)}\varphi_j(v;\lambda)
\label{eq:EL}\end{equation}
for all $u,v\in{\mathbb D}$ and
  a.e. $\lambda\in \Lambda$.
\end{theorem}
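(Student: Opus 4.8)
The plan is to start from Theorem~\ref{specset:thm}, which already gives
\begin{equation*}
\frac{d}{d\lambda}\big(E(\lambda) K_u, K_v\big)
= \frac{1}{\pi} (1-\overline u v)^{-1}
 \overline{\xi(u; \lambda )}
 \xi(v; \lambda )  \sin \Big( \ov{A(u; \lambda)}+A(v; \lambda)\Big),
\end{equation*}
and to transform the right-hand side into the sum $\sum_j \overline{\varphi_j(u;\lambda)}\varphi_j(v;\lambda)$ by a purely algebraic computation using the explicit formula for $A(z;\lambda)$ from Lemma (equation \e{Sf:eq}) and its exponentiated form $L(z;\lambda)$ from \e{repr2:eq}, \e{repr1:eq}. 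First I would write $\sin\theta = (e^{i\theta} - e^{-i\theta})/(2i)$ with $\theta = \ov{A(u;\lambda)}+A(v;\lambda)$, so that the expression involves $\exp(i\ov{A(u;\lambda)})\exp(iA(v;\lambda))$ and its conjugate-type counterpart. The key point is that $\exp(-2iA(z;\lambda)) = -i\pi L(z;\lambda)$ by the definition \e{repr2:eq}, so all the $A$-factors can be rewritten via the rational function $L(z;\lambda)$, whose poles and partial-fraction expansion \e{repr1:eq} are known.

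Next I would absorb the factor $(1-\overline u v)^{-1} = K_u(v) \cdot(\text{conjugate bookkeeping})$, more precisely use the identity $(1-\overline u v)^{-1} = \overline{K_v(u)} = K_u(v)$ together with the reproducing-kernel-type identity \e{toz:eq}, to recognize the product $(1-\overline u v)^{-1}H(v\ov\z) + (1-\overline uv)^{-1}H(\ov u\z)$-type combinations that will emerge. Concretely, after substituting \e{repr1:eq} for $L(v;\lambda)$ (and the analogous expression for $L(u;\lambda)$), the difference of the two exponentials in $\sin$ should collapse, via \e{toz:eq} applied at $u,v$ and the pole $\beta_j(\lambda)$, into $\sum_j 2c_j(\lambda)(1-|\beta_j(\lambda)|^2\cdots)K_u(\beta_j)\ov{K_v(\beta_j)}$-style terms; the constant parts $\tfrac{i}{\pi}\cos(\pi\mathbf m(\Gamma))$ in \e{repr1:eq} cancel against each other because they are real up to the factor $i$ and enter symmetrically in $\ov{A(u;\lambda)}+A(v;\lambda)$. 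One then matches the surviving factors: $\rho_j(\lambda) = \sqrt{c_j(\lambda)}$ supplies the $\sqrt{2c_j}$'s, $\xi(z;\lambda)$ is the common scalar factor, $(1-z\ov{\beta_j})^{-1}$ comes from the pole at $\beta_j$, and the product $\prod_l (1-z\ov{\alpha_l})^{-1/2}(1-z\ov{\beta_l})^{1/2}$ is exactly the square root of $\exp(-2iA(z;\lambda))$ with the $\beta_j$-pole removed, i.e. what remains of $-i\pi L(z;\lambda)$ after extracting the single factor $(1-z\ov{\beta_j})^{-1}$. The identities \e{od1:eq}, \e{od:eq} are what convert the $e^{\pm\pi i\mathbf m(\Gamma)}$ phases into the positive moduli $|\beta_l-\alpha_l|$, $|\beta_j-\beta_l|$ appearing in $c_j(\lambda)$.

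The main obstacle I anticipate is the careful bookkeeping of complex phases: $A(z;\lambda)$ is genuinely complex (it has the real piece $\tfrac\pi2\mathbf m(\Gamma)$ and an imaginary logarithmic piece from \e{Sf:eq}), so $\ov{A(u;\lambda)}$ is not simply $-A(u;\lambda)$ off the real axis, and one must track which factors are $\xi$ (real, symmetric under conjugation of $\lambda$) versus which carry genuine phase. The cleanest route is probably to verify the claimed formula \e{eq:EL} by writing both sides as rational functions of $u$ and of $v$ (for fixed generic $\lambda$) and comparing poles and residues: the right-hand side $\sum_j\ov{\varphi_j(u;\lambda)}\varphi_j(v;\lambda)$ is, as a function of $v$, a rational function with simple poles only at $v = \beta_j(\lambda)$, and likewise in $\ov u$; the left-hand side, via \e{spectral:eq} and \e{repr1:eq}--\e{repr2:eq}, has the same pole structure, so it suffices to match residues at each $\beta_j$ and the value at $u=v=0$, the latter being the normalization already implicit in the definition of $c_j(\lambda)$ via \e{cj:eq}. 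I would present the computation in the forward direction (substitute and simplify) but fall back on the pole-matching argument to organize the phase algebra and keep the verification of the square-root factors honest. A minor additional point to check is that the branch of $(1-z\ov{\alpha_l})^{-1/2}(1-z\ov{\beta_l})^{1/2}$ chosen in \e{eq:EL2} (principal branch, value $1$ at $z=0$) is consistent with the branch of $\exp(iA(z;\lambda))$ coming from $\ln 1 = 0$ in \e{Sf:eq}, which it is by construction.
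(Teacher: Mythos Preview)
Your proposal is correct and follows essentially the same route as the paper: start from \eqref{spectral:eq}, factor $\sin(\overline{A(u)}+A(v))$ as $\tfrac{\pi}{2}\,e^{iA(v)}e^{-i\overline{A(u)}}\big(\overline{L(u)}+L(v)\big)$ via \eqref{repr2:eq}, apply the partial-fraction expansion \eqref{repr1:eq} (the purely imaginary constants cancel in $\overline{L(u)}+L(v)$), use \eqref{toz:eq} with $z=\beta_j$ to absorb the factor $(1-\overline{u}v)^{-1}$, and then read off $\varphi_j$ in the form \eqref{eq:EL1}, which coincides with \eqref{eq:EL2} by \eqref{Sf:eq}. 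Your pole-matching fallback is unnecessary---the direct substitution is short and handles all the phases cleanly once you commit to the asymmetric factorization of $\sin$ above.
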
 

\begin{proof}
Let us proceed from representation \e{spectral:eq}.  Using notation \e{repr2:eq} we see that
    \begin{align}
 \sin (\ov{A(u)}+ A(v)) &=  (2i)^{-1} e^{i A(v)}  e^{- i \ov{A(u)}}  \Big(  e^{2 i \ov{A(u)}}-e^{-2i A(v)} \Big)
\nonumber\\
&=  2^{-1}\pi e^{i A(v)}  e^{- i \ov{A(u)}}  \Big(   \ov{L(u)}+ L(v) \Big)
\label{eq:Sin}
\end{align}
It follows from \e{repr1:eq} that
\begin{align*}
\ov{L(u)}+ L(v)
=  \sum_{j=1}^{m} c_j \big(
H(\bar{u} \beta_j) + H(v \bar{\beta_j}) \big),
\end{align*}
where according to \e{toz:eq}, 
\[
H(\bar{u} \beta_j)  + H(v \bar{\beta_j}) =  2(1-\overline u v) 
 \overline {K_{\beta_j}(u)} {K_{\beta_j}(v)}.
\]
Substituting these expressions   into \eqref{eq:Sin}, we find that
\begin{equation}
 \sin (\ov{A(u)}+ A(v)) 
 = \pi e^{i A(v)}  e^{- i \ov{A(u)}}  
 (1-\overline u v)   
 \sum_{j=1}^{m} c_j  \overline {K_{\beta_j}(u)} {K_{\beta_j}(v)} .
\label{eq:EL1x}
\end{equation} 
Putting together \eqref{spectral:eq} and \e{eq:EL1x}, we get representation \e{eq:EL} with 
\begin{equation}
\varphi_j(z) =  
  e^{-\pi i {\mathbf m}(\Gamma)/2}\rho_{j}  \xi(z) K_{\beta_{j}} (z)  e^{i A(z)}.
\label{eq:EL1}
\end{equation}
In view of the definition \e{eq:KK} and formula \e{Sf:eq}, the relation \eqref{eq:EL1} 
coincides with 
\eqref{eq:EL2}.  
Thus the representation \e{eq:EL} holds, as claimed.
\end{proof}

\begin{corollary}
For all $z\in{\mathbb D}$  and $j=1,\ldots,m$,  we have
\begin{equation}
\varphi_j(z; \cdot)\in L^{2} (\Lambda).
\label{eq:LL}\end{equation}
\end{corollary}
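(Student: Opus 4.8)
The plan is to show that $\varphi_j(z;\cdot)\in L^2(\Lambda)$ by bounding $|\varphi_j(z;\lambda)|^2$ pointwise in $\lambda$ by an integrable function, using the explicit formula \eqref{eq:EL2}. First I would isolate the three types of factors in \eqref{eq:EL2}: the prefactor $\rho_j(\lambda)=\sqrt{c_j(\lambda)}$, the outer-function factor $\xi(z;\lambda)$, and the rational factors $(1-z\ov{\beta_j(\lambda)})^{-1}\prod_l(1-z\ov{\alpha_l(\lambda)})^{-1/2}(1-z\ov{\beta_l(\lambda)})^{1/2}$. For fixed $z\in\mathbb D$ we have $|1-z\ov u|\ge 1-|z|>0$ for all $u\in\clos\mathbb D$, so the rational factors are bounded above (and the denominators bounded below) by constants depending only on $|z|$ and $m$; hence these contribute a bounded, measurable factor. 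The coefficients $c_j(\lambda)$ given by \eqref{cj:eq} are products and quotients of chord lengths $|\beta_l(\lambda)-\alpha_l(\lambda)|$ and $|\beta_j(\lambda)-\beta_l(\lambda)|$; since $\mathbf m(\Gamma(\lambda))\le 1$ each chord length in the numerator is bounded, but the denominators $|\beta_j(\lambda)-\beta_l(\lambda)|$ could in principle degenerate, so some care is needed there.

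The cleanest route, however, is to avoid estimating the individual pieces and instead compare \eqref{eq:EL} directly with the absolutely continuous spectral measure. Taking $u=v=z$ in \eqref{eq:EL} gives
\begin{equation*}
\sum_{j=1}^m |\varphi_j(z;\lambda)|^2 = \frac{d(E(\lambda)K_z,K_z)}{d\lambda}
\qquad\text{for a.e. }\lambda\in\Lambda.
\end{equation*}
Since each summand on the left is non-negative, it suffices to show that the right-hand side is in $L^1(\Lambda)$. But $E(\cdot)K_z$ is a finite measure: indeed $(E(\mathbb R)K_z,K_z)=\|K_z\|^2=(1-|z|^2)^{-1}<\infty$, and by Theorem~\ref{ac:thm} the measure $(E(\cdot)K_z,K_z)$ is absolutely continuous, so its Radon–Nikodym derivative lies in $L^1(\mathbb R)$, in particular in $L^1(\Lambda)$. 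Therefore $\sum_j|\varphi_j(z;\cdot)|^2\in L^1(\Lambda)$, and since each term is non-negative, $|\varphi_j(z;\cdot)|^2\in L^1(\Lambda)$, i.e. $\varphi_j(z;\cdot)\in L^2(\Lambda)$ for every $j$.

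The only point requiring attention is the interchange of the pointwise identity \eqref{eq:EL} with the integration: formula \eqref{eq:EL} holds for a.e.\ $\lambda\in\Lambda$, and $d(E(\lambda)K_z,K_z)/d\lambda$ is by definition the a.e.-defined density of the absolutely continuous measure $(E(\cdot)K_z,K_z)$, so the identification is legitimate on a set of full measure. I expect no real obstacle here; the argument is essentially a one-line deduction from Theorem~\ref{multiple:thm} together with absolute continuity (Theorem~\ref{ac:thm}) and the finiteness $\|K_z\|^2<\infty$. The main thing to get right is simply to invoke \eqref{eq:EL} with $u=v=z$ and to note positivity of each summand, rather than attempting the more laborious direct estimate of \eqref{eq:EL2}.
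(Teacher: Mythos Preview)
Your proposal is correct and takes essentially the same approach as the paper: set $u=v=z$ in \eqref{eq:EL}, integrate, and bound $\int_\Lambda \sum_j |\varphi_j(z;\lambda)|^2\,d\lambda$ by $\|K_z\|^2$, then use non-negativity of each summand. The paper's proof is exactly this one-line deduction; your preliminary discussion of estimating the factors of \eqref{eq:EL2} directly is unnecessary (as you yourself recognize).
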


\begin{proof}
Indeed, it follows from \eqref{eq:EL} that
\[
\sum_{j=1}^{m} 
\int_{\Lambda}|\varphi_j(z;\lambda)|^{2}d\lambda=\int_{\Lambda}\frac{d\big(E(\lambda) K_z , K_z\big) }{d\lambda} d\lambda\leq \|K_z\|^{2},
\]
which implies \eqref{eq:LL}. 
\end{proof}

In view of the absolute continuity of $T$ established in Theorem 
\ref{ac:thm},  we can also state

\begin{corollary}
For any bounded function $g:\mathbb R\to \mathbb C$ 
with support in $\Lambda$, 
and any $u, v\in \mathbb D$, 
we have
\begin{align}\label{Multiple:eq}
\big(g(T) K_u, K_v\big)
= \int\limits_{\Lambda} g(\lambda) 
\sum\limits_{j=1}^m 
\overline{\varphi_j(u; \lambda)} \varphi_j(v; \lambda)\, 
 d\lambda.
\end{align}
\end{corollary}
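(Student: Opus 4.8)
The plan is to deduce \eqref{Multiple:eq} from Theorem~\ref{multiple:thm} together with the absolute continuity of $T$. Since $T$ is absolutely continuous by Theorem~\ref{ac:thm}, for any bounded Borel function $g$ supported in $\Lambda$ and any $u,v\in\mathbb D$ we may write, using the spectral theorem,
\[
\big(g(T)K_u,K_v\big)=\int_{\mathbb R} g(\lambda)\, d\big(E(\lambda)K_u,K_v\big)
=\int_{\Lambda} g(\lambda)\,\frac{d\big(E(\lambda)K_u,K_v\big)}{d\lambda}\, d\lambda,
\]
where in the second step we used that $\operatorname{supp} g\subset\Lambda$ and that the measure $\lambda\mapsto(E(\lambda)K_u,K_v)$ is absolutely continuous, so that it is the integral of its Radon--Nikodym derivative. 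The passage from the (complex) measure to its density is justified because $(E(\cdot)K_u,K_v)$ is a linear combination of the nonnegative absolutely continuous measures $(E(\cdot)h,h)$ obtained via polarization from $h=K_u, K_v, K_u\pm K_v, K_u\pm iK_v$; each of these is absolutely continuous by Theorem~\ref{ac:thm}, hence so is their combination.

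Next I would substitute the formula \eqref{eq:EL} from Theorem~\ref{multiple:thm} for the density on $\Lambda$:
\[
\frac{d\big(E(\lambda)K_u,K_v\big)}{d\lambda}=\sum_{j=1}^{m}\overline{\varphi_j(u;\lambda)}\,\varphi_j(v;\lambda),
\qquad\text{a.e. }\lambda\in\Lambda.
\]
Inserting this into the integral and exchanging the finite sum with the integral yields exactly \eqref{Multiple:eq}. The interchange of sum and integral is trivial since the sum is finite (here $m<\infty$ by Condition~\ref{mult:cond}); no dominated-convergence argument is needed.

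The only point requiring a word of care is the integrability of the integrand, i.e.\ that $g(\lambda)\sum_{j}\overline{\varphi_j(u;\lambda)}\varphi_j(v;\lambda)$ is in $L^1(\Lambda)$, so that the manipulations above are legitimate. But this follows from \eqref{eq:LL}: each $\varphi_j(u;\cdot)$ and $\varphi_j(v;\cdot)$ lies in $L^2(\Lambda)$, hence $\overline{\varphi_j(u;\cdot)}\varphi_j(v;\cdot)\in L^1(\Lambda)$ by the Cauchy--Schwarz inequality, and multiplying by the bounded function $g$ keeps it in $L^1(\Lambda)$. So there is no genuine obstacle here; the statement is essentially a bookkeeping consequence of Theorems~\ref{ac:thm} and~\ref{multiple:thm} and the corollary \eqref{eq:LL}. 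If anything, the one thing to be careful about is keeping track of which argument of $\varphi_j$ carries the complex conjugate, so that the Hermitian symmetry $\big(g(T)K_u,K_v\big)=\overline{\big(\overline{g}(T)K_v,K_u\big)}$ is respected; this is automatic from the form of \eqref{eq:EL}.
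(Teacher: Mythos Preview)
Your argument is correct and is exactly the approach the paper has in mind: it states the corollary immediately after Theorem~\ref{multiple:thm} with only the remark ``In view of the absolute continuity of $T$ established in Theorem~\ref{ac:thm}'' and no further proof. Your additional justification of integrability via \eqref{eq:LL} is fine but not strictly needed, since the Radon--Nikodym density of the finite complex measure $d(E(\lambda)K_u,K_v)$ is automatically in $L^1$.
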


Note the special case $m = 1$, i.e. 
$\Gamma(\lambda) =  (\alpha(\lambda),\beta(\lambda))$ 
for a.e. $\lambda\in \Lambda$. 
In this case 
\begin{align}\label{simplee:eq}
\frac{d\big(E(\lambda) K_u, K_v\big) }{d\lambda}
= \overline{\varphi(u;\lambda)}\varphi(v;\lambda)
\end{align}
for all $u,v\in{\mathbb D}$ and a.e. $\lambda\in \Lambda$, with the 
function 
\begin{align}\label{simplephi:eq}
\varphi(z;\lambda) = \rho(\lambda)  \big(1-z /\alpha(\lambda)\big)^{-1/2}
\big(1-z /\beta(\lambda)\big)^{-1/2} \xi(z;\lambda), 
\end{align}
where, according to \eqref{cj:eq}, 
\begin{align}\label{simplerho:eq}
\rho(\lambda) = \sqrt{\frac{1}{2\pi}\big|   \beta(\lambda)-\alpha(\lambda) \big|}. 
\end{align}

\begin{remark}
The functions $\varphi_j$ in 
the right-hand side of the representation
\eqref{eq:EL} are not defined uniquely. In particular, we can obtain a  representation similar to
\eqref{eq:EL} but with the roles of $\alpha_j$ and $\beta_j$ 
reversed.  Indeed, let $\widetilde\Gamma = \widetilde\Gamma(\lambda) 
= \mathbb T\setminus \Gamma(\lambda)$ be 
the complement of $\Gamma(\lambda)$, that is, 
\begin{align*}
\widetilde \Gamma = \bigcup_{j=1}^m (\beta_{j-1}, \alpha_j),\q \beta_{0}:=\beta_m,
\end{align*}
up to a set of measure zero. Define $\widetilde A(z)$ by the formula \eqref{S:eq} with $\Gamma$ replaced by
$\widetilde\Gamma$. Then $A(z) = \frac{\pi}{2} - \widetilde A(z)$, and hence
\begin{align*}
\sin\big(\overline{A(u)} + A(v)\big) = \sin\bigg(\overline{\widetilde A(u)} + \widetilde A(v)\bigg).
\end{align*} 
Therefore the representation \eqref{spectral:eq} holds with 
$A(\ \cdot\ )$ replaced by $\widetilde A(\ \cdot\ )$. 
Implementing this change throughout the proof of Theorem \ref{multiple:thm}, 
we obtain the representation 
of the form \eqref{eq:EL} with the functions $\widetilde\varphi_j$ given by 
\begin{multline*}
\widetilde\varphi_j(z; \lambda) = \widetilde\rho_j (\lambda)  \xi(z; \lambda)
(1-z\ov{\beta_j (\lambda)}))^{-  1}
 \prod_{l=1} ^{m}(1-z\ov{\beta_{l}(\lambda)})^{-\frac{1}{2}}
(1-z \ov{\alpha_l (\lambda)})^{\frac{1}{2}}
\end{multline*} 
and (cf.  \eqref{cj:eq}): 
\begin{align*}
\widetilde\rho_j = 
\sqrt{\frac{1}{2\pi}\Big(\prod_{l= 1}^m |\alpha_j-\beta_l|\Big)\, \Big(\prod_{l\not = j}|\alpha_j-\alpha_l|^{-1}\Big)}.
\end{align*}
 \end{remark}

\subsection{Diagonalization}
  Now we are in a position to construct a unitary operator
\begin{equation*}
\Phi_{\Lambda} : E_T(\Lambda){\mathbb H}^{2}\to L^{2}(\Lambda; {\mathbb C}^m),
\end{equation*}
such that
\begin{equation}
(\Phi_{\Lambda} T f)(\lambda)=\lambda (\Phi_{\Lambda} f) (\lambda),\q \lambda\in \Lambda.
\label{eq:FL1}\end{equation}
These formulas mean 
that $\Phi_\Lambda$ diagonalizes the operator $T E(\Lambda)$ and the spectrum of 
$T$ on the interval $\Lambda$ has multiplicity $m$. 
First we construct a bounded operator
\[
\Phi : {\mathbb H}^{2} \to L^{2}(\Lambda;  {\mathbb C}^{m}),
\]
which is defined
on the set $\{K_z, z\in \mathbb D\}$, total in $\mathbb H^2$, by the formula
\begin{align}\label{phiku:eq}
(\Phi K_z)(\lambda) = 
\big\{\overline{\varphi_j(z; \lambda)}\big\}_{j=1}^m,\q \forall z\in{\mathbb D}, \q \textup{a.e.}\q 
\lambda\in \Lambda,
\end{align} 
where  $\varphi_j(z; \lambda)$ are functions  \eqref{eq:EL2}. 
The norm and the inner product 
in the space $L^2(\Lambda, \mathbb C^m)$ are denoted ${\boldsymbol |}\cdot {\boldsymbol |}$ and
 $\langle\ \cdot\ , \ \cdot\ \rangle$, respectively. 

Definition \eqref{phiku:eq} 
allows us to rewrite relation \eqref{Multiple:eq} for the characteristic function 
 $\1_{X}(\lambda)$ of 
a Borel subset $X \subset\Lambda$ as 
\begin{align}\label{direct:eq}
(E(X)K_u,  K_v) = 
\langle \1_{X}\Phi K_u, \Phi K_v\rangle ,
\end{align}
whence
\begin{equation}
(E(X)f  ,g ) = 
\langle \1_{X}\Phi f, \Phi g\rangle 
\label{eq:ad}\end{equation}
for all $f, g\in   \mathcal K = {\rm span}\,\{K_z, z\in \mathbb D\}$. 
In particular, we have
\begin{align*}
{\boldsymbol |}\Phi f {\boldsymbol |} 
= \|E(\Lambda) f\|, \q\forall f\in   \mathcal K,
\end{align*}
so that $\Phi$ extends to a bounded operator on the whole space  ${\mathbb H}^{2}$. 
This operator is isometric on the subspace $E (\Lambda) {\mathbb H}^{2}$ and  equals 
zero on its orthogonal complement. 

The construction of the adjoint operator $\Phi^{*}$ is quite standard.

\begin{lemma}
For any $\mathbf g= (g_{1}, \ldots, g_{m})\in L^2(\Lambda; \mathbb C^m)$, the operator $\Phi^*: L^2(\Lambda;\mathbb C^m)\to \mathbb H^2$ 
is given by the formula
\begin{equation}
(\Phi^*\mathbf g)(z) = \sum\limits_{j=1}^m
\int\limits_{\Lambda} \varphi_j(z; \lambda) g_j(\lambda) d\lambda, \q z\in\mathbb D.
\label{eq:LL1}\end{equation}
\end{lemma}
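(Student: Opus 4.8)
The plan is to compute $\Phi^*$ directly from its defining property, namely that it is the Hilbert-space adjoint of the bounded operator $\Phi$ constructed in \eqref{phiku:eq}. Since $\mathcal K = \mathrm{span}\,\{K_z, z\in\mathbb D\}$ is total in $\mathbb H^2$, the function $(\Phi^*\mathbf g)(z)$ is determined, via the reproducing property \eqref{reprod:eq}, by the scalar products $(\Phi^*\mathbf g, K_z)$ for all $z\in\mathbb D$; indeed $(\Phi^*\mathbf g)(z) = (\Phi^*\mathbf g, K_z)$ for every $z\in\mathbb D$.

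First I would write, using the definition of the adjoint and then \eqref{phiku:eq},
\begin{align*}
(\Phi^*\mathbf g)(z) = (\Phi^*\mathbf g, K_z) = \langle \mathbf g, \Phi K_z\rangle
= \int_\Lambda \sum_{j=1}^m g_j(\lambda)\,\overline{(\Phi K_z)_j(\lambda)}\,d\lambda
= \int_\Lambda \sum_{j=1}^m g_j(\lambda)\,\varphi_j(z;\lambda)\,d\lambda,
\end{align*}
which is exactly \eqref{eq:LL1}. The interchange of the finite sum and the integral is trivial, so the only genuine point to check is that the right-hand side of \eqref{eq:LL1} indeed defines an element of $\mathbb H^2$ and that the pairing above is legitimate.

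The main obstacle — really the only nonroutine step — is justifying the use of the pairing $\langle \mathbf g, \Phi K_z\rangle$ in the form written: one must know that $\Phi K_z \in L^2(\Lambda;\mathbb C^m)$, i.e. that each component $\overline{\varphi_j(z;\cdot)}$ lies in $L^2(\Lambda)$, so that the integral converges absolutely for every $\mathbf g\in L^2(\Lambda;\mathbb C^m)$ by Cauchy–Schwarz. But this is precisely the content of the Corollary establishing \eqref{eq:LL}, which I may invoke. With that in hand, the map $\mathbf g\mapsto \int_\Lambda\sum_j\varphi_j(z;\lambda)g_j(\lambda)\,d\lambda$ is a bounded linear functional on $L^2(\Lambda;\mathbb C^m)$ for each fixed $z$, agrees with $(\Phi^*\mathbf g, K_z)$, and hence equals $(\Phi^*\mathbf g)(z)$ by \eqref{reprod:eq}. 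This proves \eqref{eq:LL1}, completing the argument.
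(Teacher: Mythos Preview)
Your proof is correct and follows essentially the same route as the paper: compute $\langle \mathbf g, \Phi K_z\rangle$ from the definition \eqref{phiku:eq}, identify it with $(\Phi^*\mathbf g, K_z) = (\Phi^*\mathbf g)(z)$ via the reproducing property \eqref{reprod:eq}, and justify convergence of the integral by invoking \eqref{eq:LL}. The paper's argument is the same, just written more tersely.
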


\begin{proof}
We first note that the right-hand side here is well-defined according to \e{eq:LL}.
By the definition \eqref{phiku:eq}, we have
\begin{align*}
\langle \mathbf g, \Phi K_z\rangle  =\sum\limits_{j=1}^m\int\limits_{\Lambda} 
\varphi_j(z; \lambda) g_j(\lambda) d\lambda  
\end{align*}
for all $z\in \mathbb D$.
At the same time, using \eqref{reprod:eq} we find that 
\begin{align*}
\langle \mathbf g, \Phi K_z\rangle = ( \Phi^*\mathbf g, K_z) = (\Phi^*\mathbf g)(z). 
\end{align*}
Putting together these two equalities, we get \e{eq:LL1}.
\end{proof}

 
 Next, we verify the intertwining property.
 
\begin{lemma}
For any Borel subset $X\subset \Lambda$ 
we have
\begin{align}\label{borel:eq}
\Phi E(X) = \1_{X}\Phi.
\end{align}
\end{lemma}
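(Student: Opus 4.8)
The plan is to establish the operator identity \eqref{borel:eq} by fixing $f\in{\mathbb H}^2$ and checking that the element $\Phi E(X)f-\1_X\Phi f$ of $L^2(\Lambda;{\mathbb C}^m)$ vanishes; since $\Phi E(X)$ and $\1_X\Phi$ are both bounded, it would already suffice to do this for $f$ in the total set $\mathcal K$, but working on all of ${\mathbb H}^2$ costs nothing more. The one preliminary I would record first is that relation \eqref{eq:ad} extends from $f,g\in\mathcal K$ to arbitrary $f,g\in{\mathbb H}^2$: both sides are bounded sesqui-linear forms (because $E(X)$, $\Phi$ and $\1_X$ are bounded) and they agree on the dense subspace $\mathcal K$. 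This extension is the only point requiring a moment's care, since in the main step I need \eqref{eq:ad} with the argument $E(X)f$, which in general does not lie in $\mathcal K$.

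Granting the extended form of \eqref{eq:ad}, I would compute
\begin{multline*}
{\boldsymbol |}\Phi E(X)f-\1_X\Phi f{\boldsymbol |}^2\\
={\boldsymbol |}\Phi E(X)f{\boldsymbol |}^2-2\,\Re\langle\Phi E(X)f,\1_X\Phi f\rangle+{\boldsymbol |}\1_X\Phi f{\boldsymbol |}^2
\end{multline*}
and show that each of the three terms equals $(E(X)f,f)$. Indeed, ${\boldsymbol |}\1_X\Phi f{\boldsymbol |}^2=\langle\1_X\Phi f,\Phi f\rangle=(E(X)f,f)$ because $\1_X$ is a self-adjoint idempotent and by \eqref{eq:ad}. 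Next, since $X\subset\Lambda$ we have $E(\Lambda)E(X)=E(X)$, so ${\boldsymbol |}\Phi E(X)f{\boldsymbol |}=\|E(\Lambda)E(X)f\|=\|E(X)f\|$ (using that $\Phi$ is isometric on $E(\Lambda){\mathbb H}^2$ and vanishes on its orthogonal complement), whence ${\boldsymbol |}\Phi E(X)f{\boldsymbol |}^2=(E(X)f,f)$. Finally, $\langle\Phi E(X)f,\1_X\Phi f\rangle=\langle\1_X\Phi(E(X)f),\Phi f\rangle=(E(X)E(X)f,f)=(E(X)f,f)$ by the extended \eqref{eq:ad}. Adding up, the right-hand side collapses to $(E(X)f,f)-2(E(X)f,f)+(E(X)f,f)=0$.

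Thus $\Phi E(X)f=\1_X\Phi f$ for every $f\in{\mathbb H}^2$, which is precisely \eqref{borel:eq}. The whole argument is a matter of combining \eqref{eq:ad} with the projection identity $E(\Lambda)E(X)=E(X)$ and the isometry of $\Phi$ on $E(\Lambda){\mathbb H}^2$; no further information about the eigenfunctions $\varphi_j$ is used, and the only genuine obstacle is ensuring that \eqref{eq:ad} is available for arguments lying outside $\mathcal K$.
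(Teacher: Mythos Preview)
Your proof is correct and follows essentially the same route as the paper: both arguments show that $\Phi E(X)-\1_X\Phi$ has zero norm by expanding the square and reducing everything to the identity $\Phi^*\1_X\Phi=E(X)$ (equivalently, the extension of \eqref{eq:ad} to all of $\mathbb H^2$) together with $\Phi^*\Phi=E(\Lambda)$. The only cosmetic difference is that the paper carries out the computation at the operator level, expanding $(\Phi E(X)-\1_X\Phi)^*(\Phi E(X)-\1_X\Phi)$ directly, whereas you work vector by vector; the substance is identical.
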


\begin{proof} 
Observe that
\begin{multline*}
(\Phi E(X) - \1_{X}\Phi)^*(\Phi E(X) -\1_{X}\Phi)
\\
= \big( E(X)\Phi^*\Phi E(X) -E(X)\Phi^* \1_{X}\Phi  \big) +\big( - \Phi^* \1_{X}\Phi E(X)  + \Phi^* \1_{X}\Phi \big).
\end{multline*}
Both terms on the right are equal to zero because, 
according to  \eqref{eq:ad}, 
$\Phi^* \1_X\Phi = E(X)$ and, in particular,  $\Phi^*\Phi = E(\Lambda)$.
\end{proof}
%
%


Now we define the operator $\Phi_\Lambda: E(\Lambda)\mathbb H^2\to L^2(\Lambda; \mathbb C^m)$ 
by the formula
\begin{align}\label{philam:eq}
\Phi_\Lambda f = \Phi f,\ f\in E(\Lambda)\mathbb H^2.
\end{align}
This operator is isometric, and, due to \eqref{borel:eq}, it satisfies \eqref{eq:FL1}. 
It remains to show that the mapping $\Phi_\Lambda$ is surjective and hence unitary.

\begin{lemma}
We have $\Ran \Phi_{\Lambda} = L^2(\Lambda; \mathbb C^m)$. 
\end{lemma}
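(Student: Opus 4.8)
The plan is to exploit that $\Phi_\Lambda$ is an isometry, so $\Ran\Phi_\Lambda=\Ran\Phi$ is a \emph{closed} subspace of $L^2(\Lambda;\mathbb C^m)$; hence it is enough to show that it is dense, i.e.\ that $\ker\Phi^*=(\Ran\Phi_\Lambda)^\perp=\{0\}$. So I would take an arbitrary $\mathbf g=(g_1,\dots,g_m)\in L^2(\Lambda;\mathbb C^m)$ with $\Phi^*\mathbf g=0$ and argue that $\mathbf g=0$.

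First, taking adjoints in the intertwining relation \eqref{borel:eq} gives $E(X)\Phi^*=\Phi^*\1_X$ for every Borel set $X\subseteq\Lambda$, whence $\Phi^*(\1_X\mathbf g)=E(X)\Phi^*\mathbf g=0$. By the formula \eqref{eq:LL1} for $\Phi^*$ this means that $\int_X\sum_{j=1}^m\varphi_j(z;\lambda)g_j(\lambda)\,d\lambda=0$ for all Borel $X\subseteq\Lambda$ and all $z\in\mathbb D$; since the integrand lies in $L^1(\Lambda)$ by \eqref{eq:LL}, for each fixed $z$ the function $\lambda\mapsto\sum_j\varphi_j(z;\lambda)g_j(\lambda)$ vanishes a.e. Running $z$ through a countable dense subset of $\mathbb D$ and using that $z\mapsto\varphi_j(z;\lambda)$ is analytic on $\mathbb D$, I obtain one null set outside of which $\sum_{j=1}^m\varphi_j(z;\lambda)g_j(\lambda)=0$ holds simultaneously for \emph{all} $z\in\mathbb D$.

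Second, at such a $\lambda$ I would substitute the explicit expression \eqref{eq:EL2}. The factor $\xi(z;\lambda)\prod_{l=1}^m(1-z\ov{\alpha_l(\lambda)})^{-1/2}(1-z\ov{\beta_l(\lambda)})^{1/2}$ is common to all $j$ and is analytic and zero-free on $\mathbb D$: the binomials $1-z\ov{\alpha_l}$ and $1-z\ov{\beta_l}$ do not vanish for $|z|<1$, and $\xi(z;\lambda)$ is the exponential of the integral in \eqref{xi:eq}, which is finite for a.e.\ $\lambda$ by Lemma~\ref{ac}. Dividing it out, the identity reduces to $\sum_{j=1}^m\rho_j(\lambda)g_j(\lambda)\,(1-z\ov{\beta_j(\lambda)})^{-1}=0$ for all $z\in\mathbb D$. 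By Condition~\ref{mult:cond} the points $\beta_1(\lambda),\dots,\beta_m(\lambda)$ are pairwise distinct, so multiplying through by $\prod_{l}(1-z\ov{\beta_l(\lambda)})$ yields a polynomial identity which, evaluated at $z=\beta_k(\lambda)$, leaves only the $k$-th term and gives $\rho_k(\lambda)g_k(\lambda)\prod_{l\ne k}(1-\beta_k(\lambda)\ov{\beta_l(\lambda)})=0$. Since $\beta_k(\lambda)\ov{\beta_l(\lambda)}\ne1$ for $l\ne k$ and $\rho_k(\lambda)=\sqrt{c_k(\lambda)}>0$ by \eqref{cj:eq}, this forces $g_k(\lambda)=0$ for every $k$. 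Hence $\mathbf g=0$, so $(\Ran\Phi_\Lambda)^\perp=\{0\}$, and, being closed, $\Ran\Phi_\Lambda=L^2(\Lambda;\mathbb C^m)$.

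The step I expect to require the most care is the measure‑theoretic interchange in the second paragraph: passing from the family of identities $\Phi^*(\1_X\mathbf g)=0$, one for each Borel $X$, to a \emph{single} null set off which $\sum_j\varphi_j(z;\lambda)g_j(\lambda)=0$ for \emph{all} $z\in\mathbb D$ at once. This is precisely what the $L^1$‑integrability \eqref{eq:LL}, the arbitrariness of $X$, and the analyticity in $z$ together with the separability of $\mathbb D$ are needed for; once this is secured, the concluding pole/evaluation argument is purely algebraic and routine.
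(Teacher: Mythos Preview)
Your proof is correct and follows the same route as the paper: show $\ker\Phi^*=\{0\}$ via the intertwining relation and the arbitrariness of $X$, strip the common zero-free analytic factor from the identity $\sum_j\varphi_j(z;\lambda)g_j(\lambda)=0$, and exploit the linear independence of the Cauchy kernels $(1-z\ov{\beta_j(\lambda)})^{-1}$. The paper multiplies by a single factor $(1-z\ov{\beta_l})$ and lets $z\to\beta_l$ rather than clearing all denominators, and its accompanying Remark anticipates exactly your countable-$z$ device for consolidating the null sets.
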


\begin{proof}
Supposing the contrary, 
we find 
 an element $\mathbf g= (g_{1}, \ldots, g_{m})\in L^2(\Lambda; \mathbb C^m)$ such that
$\langle \mathbf g, \Phi (E(X)K_z)\rangle  = 0$ 
for all Borel sets $X\subset \Lambda$ and all $z\in \mathbb D$. 
By the definition \eqref{phiku:eq} and by \eqref{borel:eq}, this rewrites as 
\begin{align*}
\int\limits_{X}  \sum\limits_{j=1}^m
g_j(\lambda) \varphi_j(z; \lambda)  
  d\lambda=0.
\end{align*}
Since $X$ is arbitrary, this 
implies that 
\[
\sum\limits_{j=1}^m
g_j(\lambda) \varphi_j(z; \lambda) = 0
\]
 for   a.e. $\lambda\in\Lambda$.
 It now follows   from definition \e{eq:EL1} that
 \[
\xi(z; \lambda) e^{iA(z; \lambda)}\sum\limits_{j=1}^m \rho_j  
g_j(\lambda)  {(1- z \ov{\beta_j(\lambda)})^{-1}} = 0.
\]
Since by definition \e{xi:eq} $\xi(z; \lambda) \neq 0$, 
and  $\exp(iA(z; \lambda))\not = 0$, we see that
\begin{equation}
\big(1 - z\overline{\beta_l (\lambda)}\big) \sum\limits_{j=1}^m \rho_j  
g_j(\lambda)   {(1- z \ov{\beta_j(\lambda)})^{-1}} = 0
\label{eq:tot}
\end{equation}
 for all $z\in \mathbb D$, all 
 $l = 1, \ldots, m$ and a.e. $\lambda\in\Lambda$. 
 Passing here to the limit
 $z\to\beta_l(\lambda)$, we find that $\rho_l g_l(\lambda)=0$  
 and hence $g_l(\lambda)=0$ because $\rho_l\neq 0$, for all $l=1,\ldots, m$.
 \end{proof}
 
 \begin{remark}
The set of $\lambda\in \Lambda$  where relation \e{eq:tot} is satisfied, might depend on $z$. 
This difficulty is however inessential for our construction  because 
it suffices to work on a subset $\mathcal{K}_{0}\subset \mathcal{K}$ of linear combinations of functions
 $K_{z}$ with rational $z\in \mathbb D$. 
 The set $\mathcal{K}_{0}$ remains dense in ${\mathbb H}^{2}$, but it is countable. 
 Therefore  \e{eq:count} is satisfied on a set of $\lambda\in \Lambda$ of full 
 measure which is independent of rational $z$. Then it suffices to pass in \e{eq:count} to the limit
 $z \to\beta_l(\lambda)$ by a sequence of rational $z$.
 \end{remark}
 
Let us summarize the results obtained.

\begin{theorem}\label{specuni:thm}
 Let $\omega$ satisfy Condition~\ref{om:cond}, 
 and let  Condition~\ref{mult:cond} be satisfied on some interval 
 $\Lambda\subset (\gamma_{1}, \gamma_2)$ with some finite number $m$. Define the operators  
 $\Phi: {\mathbb H}^2\to L^{2} (\Lambda; {\mathbb C}^m)$ and 
 $\Phi_\Lambda: E(\Lambda){\mathbb H}^2\to L^{2} (\Lambda; {\mathbb C}^m)$  
 by formulas 
 \e{phiku:eq} and \eqref{philam:eq} respectively. 
Then 
\begin{align*}
\Phi_{\Lambda}^*\Phi_{\Lambda} = E(\Lambda),\q \Phi_{\Lambda}\Phi_{\Lambda}^* = I
\end{align*}
and 
\[
\Phi_{\Lambda} E(X) = \1_{X}\Phi_{\Lambda}
\]
for all Borel subsets $X\subset \Lambda$.
Thus the spectral representation of the operator $T E(\Lambda)$ is realized by the 
unitary operator $\Phi_\Lambda$, and the spectrum of $T$ on the interval $\Lambda$ 
has multiplicity $m$. 
\end{theorem}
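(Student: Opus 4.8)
The plan is to assemble the lemmas proved in this subsection; the theorem merely records their combined content, so no new ingredient is required. First I would observe that specializing \eqref{eq:ad} to $X=\Lambda$ yields ${\boldsymbol |}\Phi f{\boldsymbol |}^{2}=(E(\Lambda)f,f)=\|E(\Lambda)f\|^{2}$ for all $f\in\mathcal K$; since $\mathcal K$ is dense in ${\mathbb H}^{2}$ and $\Phi$ is bounded, the identity extends to every $f\in{\mathbb H}^{2}$. By polarization this gives $\Phi^{*}\Phi=E(\Lambda)$, i.e. $\Phi$ is isometric on $E(\Lambda){\mathbb H}^{2}$ and vanishes on its orthogonal complement, which is exactly the asserted relation $\Phi_{\Lambda}^{*}\Phi_{\Lambda}=E(\Lambda)$ for $\Phi_{\Lambda}$ defined by \eqref{philam:eq}.

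Next I would invoke the intertwining Lemma, $\Phi E(X)=\1_{X}\Phi$ for Borel $X\subset\Lambda$ (relation \eqref{borel:eq}); since $E(X)=E(X)E(\Lambda)$ for such $X$, restricting to $E(\Lambda){\mathbb H}^{2}$ gives $\Phi_{\Lambda}E(X)=\1_{X}\Phi_{\Lambda}$. Combining this with the surjectivity Lemma, $\Ran\Phi_{\Lambda}=L^{2}(\Lambda;{\mathbb C}^{m})$, and with the isometry from the previous step, I conclude that $\Phi_{\Lambda}$ is unitary, whence $\Phi_{\Lambda}\Phi_{\Lambda}^{*}=I$.

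It then remains to deduce \eqref{eq:FL1} and the multiplicity statement. This follows from the intertwining relation by the usual spectral-theorem argument: writing $TE(\Lambda)=\int_{\Lambda}\lambda\,dE(\lambda)$ as an operator-norm limit of finite sums $\sum_{k}\mu_{k}E(X_{k})$ with disjoint Borel sets $X_{k}\subset\Lambda$ and $\mu_{k}\in X_{k}$, applying $\Phi_{\Lambda}$ term by term via $\Phi_{\Lambda}E(X_{k})=\1_{X_{k}}\Phi_{\Lambda}$, and passing to the limit, one obtains $(\Phi_{\Lambda}Tf)(\lambda)=\lambda(\Phi_{\Lambda}f)(\lambda)$ for $f\in E(\Lambda){\mathbb H}^{2}$. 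Since $\Phi_{\Lambda}$ is unitary and multiplication by $\lambda$ on $L^{2}(\Lambda;{\mathbb C}^{m})$ has uniform multiplicity $m$, the spectrum of $T$ on $\Lambda$ has multiplicity $m$.

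Because each step is a direct consequence of a lemma already established, I do not expect a genuine obstacle here. The only point that truly needs care has in fact been dealt with earlier: in the surjectivity argument the null set of $\lambda$ on which \eqref{eq:tot} forces $g_{l}(\lambda)=0$ might a priori depend on $z$, and this is circumvented, as noted in the Remark following that lemma, by working with the countable dense subset $\mathcal K_{0}\subset\mathcal K$ of combinations of $K_{z}$ with rational $z$.
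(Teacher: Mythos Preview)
Your proposal is correct and follows essentially the same approach as the paper: the theorem is explicitly introduced there with ``Let us summarize the results obtained,'' and your write-up simply assembles the preceding lemmas (isometry from \eqref{eq:ad}, intertwining \eqref{borel:eq}, and surjectivity) in the expected order. The only addition is your explicit derivation of \eqref{eq:FL1} from the intertwining relation via approximation by simple functions, which the paper asserts without detail; this is standard and causes no issue.
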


\subsection{The operator $\Phi$}
  
The operator $\Phi$ defined by formula \e{phiku:eq} 
on the dense set $\mathcal K$, can be extended to the whole space 
$\mathbb H^2$ by the following natural formula.

\begin{proposition}
Let Condition~\ref{mult:cond} hold, and let $\varphi_j(z; \lambda)$ be  functions \e{eq:EL2}.
For all $f\in \mathbb H^2$ and $r\in (0, 1)$, consider the integral 
\begin{align}\label{Phir:eq}
(\Phi^{(r)} f)_j(\lambda) = 
\int\limits_{\mathbb T} 
f(\z)\overline{\varphi_j(r\z; \lambda)} 
 d{\bf m}(\z), \q j=1, 2, \dots, m,\q \text{a.e.}\q \lambda\in \Lambda.
 \end{align}
Then 
   \begin{enumerate}[{\rm(i)}]

\item  
The formula \eqref{Phir:eq} defines a contraction  
$\Phi^{(r)}: \mathbb H^2\to L^2(\Lambda; \mathbb C^m)$;

\item 
The family $\Phi^{(r)}$ converges strongly to the operator $\Phi$ as $r\to1$.
\end{enumerate}
\end{proposition}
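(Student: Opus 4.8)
The plan is to prove the two claims in the proposition separately, using the reproducing kernel structure and the already-established isometry properties of $\Phi$ on $\mathcal K$.

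\emph{Part (i): $\Phi^{(r)}$ is a contraction.} First I would observe that the integral in \eqref{Phir:eq} is nothing but $(\Phi K_{r\z}^{\,\prime})\dots$ — more precisely, one should recognize that for fixed $r$, the map $f\mapsto \Phi^{(r)}f$ is obtained by testing $f$ against the family $\{\varphi_j(r\z;\lambda)\}$, and that this family is built from the kernels $K_u$ with $u$ in the disk of radius $r$. Concretely, writing $f$ in terms of the reproducing property, the key is the pointwise-in-$\lambda$ reproducing identity: since $f(r\z) = (f, K_{r\z})$ is a harmonic (Poisson) average of the boundary values, one has
\begin{align*}
(\Phi^{(r)}f)_j(\lambda) = \overline{(\Phi^* e_j \1_{\{\cdot\}} \text{-type object})}\dots
\end{align*}
I would instead proceed more cleanly: for $f$ a polynomial (or in $\mathcal K$), expand and use that $\Phi K_z$ has the explicit form \eqref{phiku:eq}, together with the fact that $f(\z)\overline{\varphi_j(r\z;\lambda)}$ integrated against $d\mathbf m$ reproduces, by analyticity of $\varphi_j(\cdot;\lambda)$ in $\mathbb D$ and the reproducing kernel identity \eqref{reprod:eq}, the value $\overline{\varphi_j(r\,\bar r^{-1}\cdots)}$ — the honest statement being that $\Phi^{(r)}f$ equals $\Phi$ applied to the dilated function $f_r(\z):=f(r\z)$, i.e. $\Phi^{(r)}f = \Phi (f_r)$ where $f_r\in\mathbb H^2$ with $\|f_r\|\le\|f\|$. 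Granting $\Phi^{(r)}=\Phi\circ D_r$ with $D_r$ the dilation $f\mapsto f_r$, part (i) follows immediately because $\Phi$ is a contraction (shown in the excerpt, ${\boldsymbol|}\Phi f{\boldsymbol|}=\|E(\Lambda)f\|\le\|f\|$) and $D_r$ is a contraction on $\mathbb H^2$.

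\emph{Part (ii): strong convergence $\Phi^{(r)}\to\Phi$.} With the identification $\Phi^{(r)}=\Phi\circ D_r$ in hand, this reduces to the classical fact that the dilations $D_r$ converge strongly to the identity on $\mathbb H^2$ as $r\to 1$ (Poisson/Abel means of $\mathbb H^2$ functions converge in norm). Then for any $f\in\mathbb H^2$, ${\boldsymbol|}\Phi^{(r)}f-\Phi f{\boldsymbol|}={\boldsymbol|}\Phi(D_rf-f){\boldsymbol|}\le\|D_rf-f\|\to 0$. Alternatively, if one prefers not to isolate the dilation identity, one argues by the standard $\eps/3$ density argument: $\Phi^{(r)}$ are uniformly bounded (part (i)), they converge on the dense set $\mathcal K$ (direct computation: for $f=K_z$, $\Phi^{(r)}K_z = \Phi K_{rz}$ by \eqref{reprod:eq}, and $\Phi K_{rz}\to\Phi K_z$ as $r\to1$ by continuity of $z\mapsto\varphi_j(z;\lambda)$ on $\mathbb D$ together with dominated convergence using \eqref{eq:EL}), hence converge strongly everywhere.

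\emph{Main obstacle.} The one genuinely nontrivial point is justifying the identity $\Phi^{(r)}f=\Phi(D_rf)$ — equivalently, that integrating $f(\z)\overline{\varphi_j(r\z;\lambda)}$ against $d\mathbf m(\z)$ legitimately "reproduces" $\overline{\varphi_j}$ at the appropriate argument. This needs $\varphi_j(\cdot;\lambda)\in\mathbb H^p$ for a suitable $p$ (so that the pairing makes sense and the reproducing property applies), which should follow from the factorization \eqref{eq:EL2}: $\xi(z;\lambda)$ is an outer function in $\mathbb H^2$-type class and the remaining rational factors $(1-z\overline{\beta_j})^{-1}\prod(1-z\overline{\alpha_l})^{-1/2}(1-z\overline{\beta_l})^{1/2}$ lie in $\mathbb H^p$ for every $p<1$ near the singularities, but their product with $\xi$ needs care. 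One handles this by noting $\varphi_j(r\z;\lambda)$ is, for fixed $r<1$, a \emph{bounded} analytic function of $\z$ on $\mathbb T$ (all singularities are pushed to radius $1/r>1$), so the pairing in \eqref{Phir:eq} is manifestly well-defined and, since $\varphi_j(r\,\cdot\,;\lambda)\in\mathbb H^\infty$, the integral equals the inner product $(f,\overline{\overline{\varphi_j(r\,\cdot\,;\lambda)}})$ and one expands $f$ against $\{K_u\}$ to reduce to \eqref{phiku:eq} with the dilation absorbed. This measure-theoretic/Hardy-space bookkeeping, plus invoking the known norm-convergence of Abel means in $\mathbb H^2$, is the bulk of the work; everything else is formal.
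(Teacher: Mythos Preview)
Your approach via the dilation identity $\Phi^{(r)} = \Phi \circ D_r$ is correct and, for part (i), genuinely different from the paper's. The paper computes ${\boldsymbol|}\Phi^{(r)}f{\boldsymbol|}^2$ directly: expanding it as a double integral over $\mathbb T\times\mathbb T$, the $\lambda$-integral of $\sum_j \overline{\varphi_j(r\z)}\,\varphi_j(r\eta)$ is recognised via \eqref{eq:EL} and \eqref{direct:eq} as $(E(\Lambda)K_{r\z},K_{r\eta})$; repeated use of the reproducing property then collapses everything to $\int f(\z)\,\overline{(E(\Lambda)f_r)(r\z)}\,d{\bf m}(\z)\le \|f\|\,\|(E(\Lambda)f_r)_r\|\le\|f\|^2$. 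For part (ii) the paper does essentially what you describe in your ``alternative'' route: it checks $\Phi^{(r)}K_u=\Phi K_{ru}$ by a Cauchy-integral computation and uses $K_{ru}\to K_u$ in $\mathbb H^2$.

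Your route is cleaner once the identity $\Phi^{(r)}=\Phi\circ D_r$ is in hand, but you gloss over precisely the spot where care is needed: you deduce (i) from the \emph{global} identity, yet that identity for arbitrary $f\in\mathbb H^2$ is not automatic before $\Phi^{(r)}$ is known to be bounded. The fix is short but should be stated. First verify $\Phi^{(r)}K_u=\Phi K_{ru}$ on $\mathcal K$ (your observation $\varphi_j(r\,\cdot\,;\lambda)\in\mathbb H^\infty$ for $r<1$ is exactly what makes the Cauchy/reproducing computation rigorous here); this already shows $\Phi^{(r)}|_{\mathcal K}$ has norm $\le 1$, hence extends to a contraction $\widetilde\Phi^{(r)}$ on $\mathbb H^2$. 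Then check that $\widetilde\Phi^{(r)}$ is given by the integral \eqref{Phir:eq} for every $f$: taking $f_n\in\mathcal K$ with $f_n\to f$, one has $(\Phi^{(r)}f_n)_j(\lambda)=(f_n,\varphi_j(r\,\cdot\,;\lambda))\to (f,\varphi_j(r\,\cdot\,;\lambda))$ for \emph{every} $\lambda$ (since $\varphi_j(r\,\cdot\,;\lambda)\in\mathbb H^2$), while $\widetilde\Phi^{(r)}f_n\to\widetilde\Phi^{(r)}f$ in $L^2(\Lambda;\mathbb C^m)$, hence a.e.\ along a subsequence; the two limits agree. Your phrase ``expand $f$ against $\{K_u\}$'' is too vague for this step. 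With that patched, your argument goes through and has the virtue of yielding both (i) and (ii) from the single identity $\Phi^{(r)}=\Phi\circ D_r$; the paper's computation for (i) avoids the extension issue at the cost of a more involved double-integral manipulation.
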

 
\begin{proof}  
By definition \e{Phir:eq}, for all
   $f\in \mathbb H^2$ we have
\begin{align}\label{st:eq}
{\boldsymbol |} \Phi^{(r)} f {\boldsymbol |}^2 =\int \int   
f(\z)  \overline{f(\eta)} 
 \bigg[\sum\limits_{j=1}^m\int\limits_{\Lambda} 
\overline{\varphi_j(r\z; \lambda)} \  
\varphi_j(r \eta; \lambda)
d\lambda \bigg]
d{\bf m}(\z) d{\bf m}(\eta),
\end{align}
where all integrals without indication of the domain are taken over $\mathbb T$.
By \eqref{phiku:eq}, \eqref{direct:eq}, the integral in the square brackets equals
\begin{align*}
\langle \Phi K_u, \Phi K_v\rangle=
(E(\Lambda)K_u, K_v)
= \int  (E(\Lambda)K_u)(\sigma) \overline{K_v(\sigma)} d{\bf m}(\sigma), 
\end{align*}
where $ u = r \z$, $ v = r \eta$.
Taking into account that $\overline{K_{r\eta}(\sigma)} = K_{r\sigma}(\eta)$, and using 
\eqref{reprod:eq} 
we can now rewrite  
the right-hand side of \eqref{st:eq} as  
\begin{align*}
\int f(\z) \int(E(\Lambda) K_{r\z})(\sigma)&\  \overline{\bigg[\int f(\eta) \overline{K_{r\sigma}(\eta)}
d{\bf m}(\eta)\bigg]} d{\bf m}(\sigma)d{\bf m}(\z) \\[0.2cm]
= &\ \int f(\z) \bigg[\int (E(\Lambda) K_{r\z})(\sigma) \overline{f(r\sigma)} d{\bf m}(\sigma)\bigg]d{\bf m}(\z).
\end{align*}
With the notation $f_r(\sigma) = f(r\sigma)$, the integral in $\sigma$ equals
$(E(\Lambda) K_{r\z}, f_r) = ( K_{r\z}, E(\Lambda)f_r) = \overline{(E(\Lambda)f_r)(r\z)}$
where we have applied \eqref{reprod:eq} again. 
Thus it follows from \e{st:eq} that
\begin{align*}
{\boldsymbol |}\Phi^{(r)}f{\boldsymbol |}^2 = \int f(\z) \overline{(E(\Lambda)f_r)(r\z)}d{\bf m}(\z)
\le \|f\| \ \|(E(\Lambda) f_r)_r\|. 
\end{align*}
Since  $\| h_r\|\leq \| h\|$   for any $h\in \mathbb H^2$, 
the right-hand side does not exceed $\|f\|\ \|E(\Lambda)f_r\|\le \|f\|^2$.
Hence $\|\Phi^{(r)}\|\le 1$, as required. 

Proof of (ii).  
It suffices to check the strong convergence of $\Phi^{(r)}$ on the dense set $\mathcal K$. 
For $f = K_u$ and arbitrary $u\in\mathbb D$, we have 
\begin{align*}
\overline{(\Phi^{(r)} K_u)_j(\lambda)} = &\ 
\int 
\frac{1}{1- u \overline{\z}} \varphi_j(r \z; \lambda)
d{\bf m}(\z)\\[0.2cm]
= &\ 
\frac{1}{2\pi i} 
\int  
\frac{1}{\z-u } \varphi_j(r \z; \lambda)
d\z   = 
\varphi_j(r u; \lambda)
= \overline{(\Phi K_{ur})_j(\lambda)},
\end{align*}  
where we have used the definition \eqref{phiku:eq}. 
Since $K_{ur}\to K_u$  in $\mathbb H^2$ as $r\to 1$ and $\Phi$ is bounded,  we conclude that
 $ \Phi^{(r)} K_u \to \Phi K_u$ for all $u\in \mathbb D$. This completes the proof.
\end{proof}


\section{Eigenfunctions of Toeplitz operators}\label{eigen:sect}

Functions \e{eq:EL2} do not of course belong to the space ${\mathbb H}^{2}$. 
Nevertheless we check here that, in a natural sense, they satisfy
the equation  $T(\omega )\varphi_{j} (\lambda) = \lambda \varphi_{j} (\lambda)$. 
We recall that the numbers $  \rho_j(\lambda) = \sqrt{c_j(\lambda)}$ in \e{eq:EL2}  were defined 
by formula \e{cj:eq}, but in this section they are inessential.
 
Afterwards we also discuss 
two examples of Toeplitz operators 
with simple spectrum for which the eigenfunctions   
can be found explicitly. 

\subsection{Riemann-Hilbert problem}
Let us start with precise definitions. 
Let $\mathcal{ A}^{\rm ( int)}$ consist of functions 
$\varphi (z)$ analytic in the unit disk ${\mathbb D}$ 
and having radial limits $\varphi(\z_+) = \lim_{r\to 1-0}\varphi(r\z)$  
for a.e. $\z\in {\mathbb T}$. 
 Similarly, $\mathcal{ A}^{\rm (ext)}$ 
 consists of functions $\varphi^{\rm (ext)}(z)$ 
 analytic  in ${\mathbb C}\setminus \clos{\mathbb D}$, satisfying an estimate 
 $\varphi^{\rm (ext)} (z)= O (|z|^{-1})$
 as 
 $|z|\to\infty$ and having 
 limits $\varphi^{\rm (ext)} (\z_-) = \lim_{r\to 1+0}\varphi^{\rm(ext)}(r\z)$ 
 for a.e. $\z\in {\mathbb T}$.
 
\begin{definition}\label{RK}
 A function $\varphi\in\mathcal{A}^{\rm ( int)}$ 
 is a generalized eigenfunction of the 
 Toeplitz operator $T(\omega)$ corresponding to 
 a spectral point $\lambda$
  if the function $(\omega (\z)-\lambda)\varphi (\z)$ belongs to the class
 $\mathcal{A}^{\rm (ext)}$, that is, 
 there  exists  $\varphi^{\rm (ext)}\in\mathcal{ A}^{\rm (ext)}$ such that
 \begin{equation}
 (\omega (\z)-\lambda)\varphi (\z_+) =  \varphi^{\rm (ext)}(\z_-)
\label{eq:Ri-Hi}\end{equation}
 for a.e. $\z\in {\mathbb T}$.
 \end{definition}
 
This definition would reduce to the standard definition of 
an eigenfunction of the operator $T(\omega)$ if 
$\mathcal{A}^{\rm (int)}$ and 
$\mathcal{A}^{\rm (ext)}$ could be replaced by $ {\mathbb H}^{2}$  and 
${\mathbb H}^{2}_{-}$, respectively. 
Indeed, if there had been found functions 
$0\not= \varphi\in \mathbb H^2$ and 
$\varphi^{(\rm ext)}\in \mathbb H^2_-$, satisfying \eqref{eq:Ri-Hi}, then 
the function $\varphi$ would have satisfied the relation 
$T(\omega)\varphi = \lambda\varphi$, i.e. it would have been a proper eigenfunction of $T(\omega)$.

The relation \e{eq:Ri-Hi} 
looks like a standard homogeneous Riemann-Hilbert 
problem with the coefficient $\omega (\z)-\lambda$ but, 
in contrast to the classical presentation (see, e.g., the book \cite{Gahov}), the 
function $\omega$ is not assumed to be even continuous. 
However the worst complication comes 
from zeros of the function $\omega(\z)-\lambda$. 
As explained in \cite[\S 15]{Gahov}, even for smooth coefficients, 
the presence of roots makes the problem essentially more involved.  

We will check that the functions $\varphi_{j} (z,\lambda)$ 
defined by formula \e{eq:EL2} are generalized eigenfunctions 
of the Toeplitz operator $T(\omega)$ in the sense of Definition~\ref{RK}. 
In this subsection we fix some $\lambda\in (\gamma_{1}, \gamma_2)$ 
and assume that inclusion \e{l1:eq}   and the relation \eqref{gammaf:eq} are satisfied.

Let us first consider the function $\xi(z; \lambda)$ defined for 
all $z\in {\mathbb C}\setminus  {\mathbb T}$ by formula \e{xi:eq} and set
\begin{align}\label{eq:FFp}
Q(z; \lambda) = \int_{\mathbb T} 
\ln|\omega(\z) - \lambda|H(z\overline\z) d{\mathbf m}(\z) .
\end{align}
Then 
\begin{align}\label{eq:FF1}
\xi(z; \lambda)= \exp(-Q(z; \lambda) /2).
\end{align}
The integral in \e{eq:FFp} is convergent so that 
$Q(z; \lambda)$ is an analytic 
function of $z\in {\mathbb C}\setminus  {\mathbb T}$. Let us find the boundary values 
of $\xi(z; \lambda)$ on the circle $\mathbb T$.

\begin{lemma}\label{Priv}
Under assumption \e{l1:eq}
for  a.e. $\z\in {\mathbb T}$, there exist the limits
  \begin{equation}
\xi(\z_\pm; \lambda)=\sigma (\z,\lambda) |\omega(\z) - \lambda|^{\mp 1/2}  
\label{eq:www}\end{equation}
where the function 
  \begin{equation}
 \sigma (\z,\lambda) = \exp\bigg(\frac{1}{2}\int_{\mathbb T} 
\ln|\omega(\eta) - \lambda|  d{\mathbf m}(\eta) -  \frac{1}{2\pi i} v. p. \int_{\mathbb T} 
\ln|\omega(\eta) - \lambda| (\eta-\z)^{-1}  d \eta \bigg)
\label{eq:ss}\end{equation}
is the same for interior and exterior limits.
\end{lemma}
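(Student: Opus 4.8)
Put $g(\z):=\ln|\omega(\z)-\lambda|$; by the standing assumption \eqref{l1:eq} we have $g\in L^1(\mathbb T)$. The plan is to reduce the statement to the classical Sokhotski--Plemelj boundary formulas for a Cauchy-type integral. On $\mathbb T$ one has $H(z\ov\z)=(\z+z)(\z-z)^{-1}=2\z(\z-z)^{-1}-1$ and $\z(\z-z)^{-1}\,d\mathbf m(\z)=(2\pi i)^{-1}(\z-z)^{-1}\,d\z$, so the integral \eqref{eq:FFp} is, up to an additive constant, the Cauchy integral of $g$:
\[
Q(z;\lambda)=2(\mathcal C g)(z)-\int_{\mathbb T}g\,d\mathbf m,\qquad (\mathcal C g)(z):=\frac1{2\pi i}\int_{\mathbb T}\frac{g(\z)}{\z-z}\,d\z,\quad z\in\mathbb C\setminus\mathbb T,
\]
and hence, by \eqref{eq:FF1}, $\xi(z;\lambda)=\exp\!\big(-(\mathcal C g)(z)+\tfrac12\int_{\mathbb T}g\,d\mathbf m\big)$. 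Everything thus comes down to the boundary behaviour of $\mathcal C g$ on $\mathbb T$.

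I would then invoke the Privalov/Sokhotski--Plemelj theorem for Cauchy integrals of integrable densities: for a.e.\ $\z\in\mathbb T$ both radial limits $(\mathcal C g)(\z_\pm):=\lim_{r\to 1\mp 0}(\mathcal C g)(r\z)$ exist and
\[
(\mathcal C g)(\z_\pm)=\pm\tfrac12 g(\z)+\frac1{2\pi i}\,\mathrm{v.p.}\!\int_{\mathbb T}\frac{g(\eta)}{\eta-\z}\,d\eta .
\]
This is the only delicate point: for a merely $L^1$ density one has to justify separately the a.e.\ existence of the principal-value integral (equivalently, of the conjugate function of $g$) and of the nontangential limits of $\mathcal C g$. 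For bounded, H\"older, or $L^p$ ($p>1$) densities both are classical; the $L^1$ case follows from Fatou's theorem applied to the Poisson integral $\Re(\mathcal C g)(r\z)$ together with the almost-everywhere existence of the conjugate function of an integrable function. (Alternatively, one may obtain $\xi(\z_+;\lambda)$ from Fatou's theorem and deduce the exterior limit from the reflection identity $Q(z;\lambda)=-\ov{Q(1/\bar z;\lambda)}$, valid for $|z|>1$, which is exactly what forces the phase factor to agree from both sides.)

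Granting the Plemelj formulas, the conclusion is immediate: substituting the expressions for $(\mathcal C g)(\z_\pm)$ gives, for a.e.\ $\z\in\mathbb T$,
\[
\xi(\z_\pm;\lambda)=e^{\mp\frac12 g(\z)}\,\exp\!\Big(\tfrac12\int_{\mathbb T}g\,d\mathbf m-\frac1{2\pi i}\,\mathrm{v.p.}\!\int_{\mathbb T}\frac{g(\eta)}{\eta-\z}\,d\eta\Big),
\]
where $e^{\mp\frac12 g(\z)}=|\omega(\z)-\lambda|^{\mp 1/2}$ and the remaining factor, which carries no dependence on the choice of sign, is precisely the function $\sigma(\z,\lambda)$ of \eqref{eq:ss}. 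This establishes \eqref{eq:www} and shows that $\sigma$ is common to the interior and the exterior limits.
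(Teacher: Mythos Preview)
Your proof is correct and follows essentially the same route as the paper: both rewrite $Q(z;\lambda)$ as a Cauchy-type integral plus a constant (the paper states this as the identity $H(z\overline\eta)\,d\mathbf m(\eta)=-d\mathbf m(\eta)+(\pi i)^{-1}(\eta-z)^{-1}\,d\eta$, which is exactly your decomposition) and then invoke the Sokhotski--Plemelj formula to obtain the boundary values. Your additional remarks on the $L^1$ case and the reflection alternative are more explicit than what the paper says, but the underlying argument is the same.
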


\begin{proof}
Since
  \begin{equation}
  H(z\overline\eta) d{\mathbf m}(\eta)= - d{\mathbf m}(\eta) + (\pi i)^{-1} (\eta-z)^{-1} d \eta,
  \label{eq:ssw}\end{equation}
  we see that
  \[
Q(z; \lambda) = -\int_{\mathbb T} 
\ln|\omega(\eta) - \lambda| d{\mathbf m}(\eta)  +\frac{1}{\pi i}\int_{\mathbb T} 
\ln|\omega(\eta) - \lambda| (\eta-z)^{-1}  d \eta.
\]
The first term on the right does not depend on $z$, and by the Sokhotski-Plemelj formula, we have
\[
\lim_{r\to 1\mp 0}\int_{\mathbb T} 
\ln|\omega(\eta) - \lambda| (\eta-r \z)^{-1}  d \eta= v. p. \int_{\mathbb T} 
\ln|\omega(\eta) - \lambda| (\eta-\z)^{-1}  d \eta \pm \pi i \ln|\omega(\z) - \lambda|
\]
for  a.e. $\z\in {\mathbb T}$.
This yields the limits $Q(\z_\pm; \lambda)$. 
In view of \e{eq:FF1}, we obtain relation \e{eq:www}.
\end{proof}

\begin{corollary}
For  a.e. $\z\in {\mathbb T}$, we have the  relation 
  \begin{equation}
 \xi(\z_-; \lambda) = |\omega(\z) - \lambda| \xi(\z_+; \lambda).
\label{eq:wwwc}\end{equation}
\end{corollary}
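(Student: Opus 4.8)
The plan is to read the claim off directly from Lemma~\ref{Priv}, with no additional work. Fix a point $\lambda\in(\gamma_1,\gamma_2)$ for which the inclusion \eqref{l1:eq} holds. Then \eqref{eq:www} gives, for a.e. $\z\in\mathbb T$, both boundary values in the form
\[
\xi(\z_+;\lambda)=\sigma(\z,\lambda)\,|\omega(\z)-\lambda|^{-1/2},\qquad
\xi(\z_-;\lambda)=\sigma(\z,\lambda)\,|\omega(\z)-\lambda|^{+1/2},
\]
with one and the same factor $\sigma(\z,\lambda)$ defined by \eqref{eq:ss}.

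From here I would simply multiply the first identity by $|\omega(\z)-\lambda|$:
\[
|\omega(\z)-\lambda|\,\xi(\z_+;\lambda)
=\sigma(\z,\lambda)\,|\omega(\z)-\lambda|^{1/2}
=\xi(\z_-;\lambda),
\]
which is precisely \eqref{eq:wwwc}. The only point worth recording is that, since $\ln|\omega(\,\cdot\,)-\lambda|\in L^1(\mathbb T)$ (and $\omega\in L^1(\mathbb T)$), one has $0<|\omega(\z)-\lambda|<\infty$ for a.e. $\z$, so the fractional powers $|\omega(\z)-\lambda|^{\pm1/2}$ are well defined and the manipulation above is valid off a set of measure zero.

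There is essentially no obstacle here: the entire content sits in Lemma~\ref{Priv}, and the only bookkeeping is that the exceptional null set for \eqref{eq:wwwc} may be taken to be the union of the one arising in \eqref{eq:www} with the (measure-zero) sets $\{\z:|\omega(\z)-\lambda|=0\}$ and $\{\z:|\omega(\z)-\lambda|=\infty\}$; off this null set the stated relation holds pointwise.
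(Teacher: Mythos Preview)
Your argument is correct and is exactly what the paper intends: the corollary is stated immediately after Lemma~\ref{Priv} with no separate proof, so it is meant to be read off from \eqref{eq:www} precisely as you do. The remark that $0<|\omega(\z)-\lambda|<\infty$ a.e.\ under \eqref{l1:eq} is a helpful clarification but not strictly needed beyond what the lemma already guarantees.
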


Lemma~\ref{Priv} leads also to the following result.

\begin{theorem}
Let $\sigma (\z,\lambda)$ 
be defined  by \e{eq:ss}. Then for $  j = 1,\ldots, m$ and a.e. $ \z\in {\mathbb T}$ 
the boundary values on the unit circle of the  functions  \e{eq:EL2} are given by the relation
  \begin{align*}
\varphi_{j}(\z_+; \lambda) = \rho_{j} (\lambda) 
\sigma (\z,\lambda) &\ |\omega(\z) - \lambda|^{- 1/2} \big(1-\z\ov{\beta_j (\lambda)}\big)^{-1}\\
&\  \quad\quad\times\prod_{l=1}^m \big(1-\z_+ \ov{\alpha_l (\lambda)}\big)^{-\frac{1}{2}}
\big(1-\z_+\ov{\beta_{l}(\lambda)}\big)^{\frac{1}{2}}.
\end{align*}
\end{theorem}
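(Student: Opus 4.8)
The plan is to obtain the boundary value $\varphi_j(\z_+;\lambda)$ by passing to the radial limit $z=r\z$, $r\to 1-0$, in the product representation \e{eq:EL2}, one factor at a time. First I would record that every factor in \e{eq:EL2} is analytic in $\mathbb D$: the factor $\xi(z;\lambda)=\exp(-Q(z;\lambda)/2)$ is analytic by \e{eq:FFp}--\e{eq:FF1}; the rational factor $(1-z\ov{\beta_j(\lambda)})^{-1}$ has its only singularity on $\mathbb T$; and the half-integer powers are built from the branch of $\ln(1+u)$ with $\ln 1=0$, holomorphic on the disk $\{|u|<1\}$, which contains $-z\ov{\alpha_l(\lambda)}$ and $-z\ov{\beta_l(\lambda)}$ for every $z\in\mathbb D$. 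Hence $\varphi_j(\ \cdot\ ;\lambda)$ is analytic in $\mathbb D$, and once its radial limits are shown to exist for a.e. $\z\in\mathbb T$, it belongs to $\mathcal A^{\rm(int)}$, with $\varphi_j(\z_+;\lambda)$ equal to that limit.

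Next I would take the boundary values factor by factor. For the factor $\xi$ I would quote Lemma~\ref{Priv}, which applies because the standing assumption \e{l1:eq} is in force and yields $\xi(r\z;\lambda)\to\xi(\z_+;\lambda)=\sigma(\z,\lambda)\,|\omega(\z)-\lambda|^{-1/2}$ for a.e. $\z$. For the algebraic factors, the key observation is that $\{\alpha_l(\lambda),\beta_l(\lambda)\}_{l=1}^m$ is a finite subset of $\mathbb T$, so for a.e. $\z$ one has $\z\neq\alpha_l(\lambda),\beta_l(\lambda)$ for all $l$; for such $\z$ the quantity $1-r\z\ov{\beta_j(\lambda)}$ tends to the nonzero limit $1-\z\ov{\beta_j(\lambda)}$, while each $1-r\z\ov{\alpha_l(\lambda)}$ and $1-r\z\ov{\beta_l(\lambda)}$ lies in $\{|w-1|<1\}$ and tends to a nonzero boundary point, so the corresponding powers converge along the radius to the values written in the statement as $(1-\z_+\ov{\alpha_l(\lambda)})^{-1/2}$ and $(1-\z_+\ov{\beta_l(\lambda)})^{1/2}$ --- by definition, the subscript $\z_+$ records that one takes the radial boundary value of the chosen holomorphic branch.

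Finally I would intersect these (finitely many, all null) exceptional sets: for a.e. $\z\in\mathbb T$ every factor in \e{eq:EL2} has a finite radial limit, so by continuity of multiplication the radial limit of $\varphi_j(r\z;\lambda)$ is the product of the limits, which is precisely the asserted formula. I do not expect any genuine obstacle; the proof is essentially bookkeeping, and the only delicate points are the correct reading of the $\z_+$ notation for the branches and the (immediate) verification that the combined exceptional set still has measure zero. The same scheme, using \e{eq:wwwc} and the exterior limit $\xi(\z_-;\lambda)=\sigma(\z,\lambda)|\omega(\z)-\lambda|^{1/2}$ from Lemma~\ref{Priv}, produces the exterior trace needed afterwards to verify the Riemann-Hilbert relation \e{eq:Ri-Hi}.
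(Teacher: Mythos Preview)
Your proposal is correct and matches the paper's approach: the paper states this theorem immediately after Lemma~\ref{Priv} with the remark that it ``leads also to the following result,'' treating it as a direct consequence of substituting the boundary value \e{eq:www} for $\xi$ into \e{eq:EL2}, while the remaining algebraic factors have obvious radial limits away from the finite set $\{\alpha_l(\lambda),\beta_l(\lambda)\}$. Your write-up simply makes this bookkeeping explicit.
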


 We will now define 
the function $\varphi_j^{(\rm ext)}(z; \lambda)$ by the formula
\begin{align}\label{eq:www1}
\varphi_j^{(\rm ext)}(z; \lambda) =  e^{-\pi i {\mathbf m}(\Gamma)/2}\rho_j(\lambda) 
\xi(z; \lambda) (1-z\ov{\beta_{j}(\lambda) })^{-1}  e^{i A(z; \lambda)},\q |z|>1,
 \end{align}
 where $A(z; \lambda)$ is given by formula \e{S:eq}.
 Equations \e{eq:EL1} and \e{eq:www1} look the same 
 but the first of them applies for $|z|<1$ while the second one -- for $|z|>1$.
 Let us calculate the function $e^{i A(z)}$ for $ |z|>1$. Instead of \e{intdm:eq} we now have the identity
 \begin{align}\label{eq:tw}
\int_{a}^{b}\frac{e^{i\theta}+z}{e^{i\theta}-z}d\theta 
= a -b+ 2 i \ln
  \frac{1-  e^{ia}/z}{1-  e^{ib}/z},\q |z|>1,
 \end{align}
whence
\[
  \int_{\Gamma}  H(z\overline\z) d{\mathbf m}(\z)= -{\bf m} (\Gamma ) + i\pi^{-1}\sum_{l=1}^{m} \ln   \frac{1-  \alpha_l/z}{1- \beta_l/z}.
\]
According to  \e{S:eq} this yields
\begin{align*}
e^{i A(z)}=e^{-\pi i \mathbf m(\Gamma)/2}\prod_{l=1}^{m} (1- \alpha_l/z)^{-1/2} (1-\beta_l/z)^{1/2}.
 \end{align*}
In view of \e{eq:ssw}  it follows from the Sokhotski-Plemelj formula that
  \[
 A(\z_{+}) - A(\z_{-})=\pi \; \mbox{for} \;\z\in \Gamma\q
 \mbox{and}\q  A(\z_{+}) - A(\z_{-})=0 \; \mbox{for} \;\z\not\in \Gamma.
 \]
 whence
 \begin{equation*}
 e^{i A(\z_{+})}
 = 
\sign\big(\omega(\z) - \lambda\big)  e^{i A(\z_{-})}, \ {\rm a.e.}\  \z\in\mathbb T.
\end{equation*}
 Putting together this equality with
\e{eq:wwwc}, we  obtain the following result. Recall that the function $\xi(z; \lambda)$ was defined by relations \e{eq:FFp} and 
 \e{eq:FF1} for 
all $z\in {\mathbb C}\setminus  {\mathbb T}$.
  
  \begin{theorem}\label{R-H}
 Let $\omega$ satisfy   Condition~\ref{om:cond}, and 
 let   \e{l1:eq} and \eqref{gammaf:eq} hold for 
 some point $\lambda\in(\gamma_{1}, \gamma_2)$. Then
 for  all $j=1,\ldots,m$,  the equality 
  \begin{equation*}
(\omega (\z )-\lambda)\varphi_{j} (\z_+,\lambda)=  \varphi^{\rm (ext)}_{j} (\z_-,\lambda),
\end{equation*}
 is 
 satisfied with the functions $\varphi_j(z; \lambda)$ 
 and $\varphi^{\rm (ext)}_{j}(z; \lambda) $ defined by formulas \e{eq:EL2} and 
\begin{align*}
\varphi_j^{(\rm ext)}(z; \lambda) = \rho_j(\lambda) e^{-\pi i {\bf m} (\Gamma(\lambda))}
&\ \xi(z; \lambda) (1-z\ov{\beta_{j}(\lambda) })^{-1}\notag \\
&\ \times  \prod_{l=1}^{m} (1- \alpha_l (\lambda)/z)^{  -1/2} 
(1-\beta_l (\lambda)/z)^{ 1/2}, 
 \end{align*}
  respectively. 
  \end{theorem}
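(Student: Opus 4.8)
The plan is to verify the Riemann-Hilbert relation $(\omega(\z)-\lambda)\varphi_j(\z_+;\lambda) = \varphi_j^{\rm (ext)}(\z_-;\lambda)$ by carefully tracking boundary values of each factor in the definitions \e{eq:EL2} of $\varphi_j$ and \e{eq:www1} of $\varphi_j^{\rm (ext)}$. The key observation is that $\varphi_j$ for $|z|<1$ and $\varphi_j^{\rm (ext)}$ for $|z|>1$ are both given by the \emph{same} analytic expression, namely $e^{-\pi i {\bf m}(\Gamma)/2}\rho_j \xi(z;\lambda)(1-z\ov{\beta_j})^{-1} e^{iA(z;\lambda)}$; see \e{eq:EL1} and \e{eq:www1}. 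So the problem reduces to comparing the inner and outer radial limits of this single function across $\mathbb T$, factor by factor.

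First I would collect the three relevant boundary-value facts already established in the excerpt. The factor $\xi(z;\lambda)$ is governed by Lemma~\ref{Priv} and its Corollary, which give $\xi(\z_-;\lambda) = |\omega(\z)-\lambda|\,\xi(\z_+;\lambda)$ for a.e. $\z$. The factor $e^{iA(z;\lambda)}$ satisfies $e^{iA(\z_+)} = \sign(\omega(\z)-\lambda)\,e^{iA(\z_-)}$, which follows from the Sokhotski-Plemelj formula applied to \e{S:eq} via \e{eq:ssw}: the jump of $A$ across $\mathbb T$ is $\pi$ on $\Gamma(\lambda)$ (where $\omega-\lambda<0$) and $0$ on the complement. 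The remaining factor $\rho_j e^{-\pi i {\bf m}(\Gamma)/2}(1-z\ov{\beta_j})^{-1}$ is analytic in a neighbourhood of a.e. $\z\in\mathbb T$ (the only singularity is at $z=\beta_j$), so its interior and exterior limits coincide.

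Next I would multiply these together. Combining the jump relations yields, for a.e. $\z\in\mathbb T$,
\begin{align*}
(\omega(\z)-\lambda)\varphi_j(\z_+;\lambda)
&= (\omega(\z)-\lambda)\, e^{-\pi i{\bf m}(\Gamma)/2}\rho_j\,\xi(\z_+;\lambda)(1-\z\ov{\beta_j})^{-1}e^{iA(\z_+)}\\
&= e^{-\pi i{\bf m}(\Gamma)/2}\rho_j\,\big(|\omega(\z)-\lambda|\,\xi(\z_+;\lambda)\big)(1-\z\ov{\beta_j})^{-1}\,\sign(\omega(\z)-\lambda)\,e^{iA(\z_-)}\\
&= e^{-\pi i{\bf m}(\Gamma)/2}\rho_j\,\xi(\z_-;\lambda)(1-\z\ov{\beta_j})^{-1}e^{iA(\z_-)}
= \varphi_j^{\rm (ext)}(\z_-;\lambda),
\end{align*}
where I used $(\omega-\lambda) = |\omega-\lambda|\sign(\omega-\lambda)$ and the fact that $(1-\z\ov{\beta_j})^{-1}$ and the constants have no jump. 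The explicit product form of $\varphi_j^{\rm (ext)}$ stated in the theorem is then obtained by inserting the computed value $e^{iA(z)} = e^{-\pi i{\bf m}(\Gamma)/2}\prod_{l=1}^m(1-\alpha_l/z)^{-1/2}(1-\beta_l/z)^{1/2}$ for $|z|>1$, exactly as carried out just before the theorem statement.

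The only genuine subtlety — and the step I would be most careful about — is membership of $\varphi_j^{\rm (ext)}$ in the class $\mathcal{A}^{\rm (ext)}$, i.e. analyticity in $\C\setminus\clos\mathbb D$, existence of radial limits a.e., and the decay $\varphi_j^{\rm (ext)}(z) = O(|z|^{-1})$ as $|z|\to\infty$. Analyticity is clear since $\xi(z;\lambda)$ and $e^{iA(z;\lambda)}$ are analytic off $\mathbb T$ and the poles of $(1-z\ov{\beta_j})^{-1}$ and of the product $\prod(1-\alpha_l/z)^{-1/2}$ lie on $\mathbb T$, not in the exterior. For the decay at infinity one checks that $\xi(z;\lambda)\to \exp\big(\tfrac12\int_{\mathbb T}\ln|\omega-\lambda|\,d{\bf m}\big)$, a finite nonzero constant by \e{l1:eq}, while $(1-z\ov{\beta_j})^{-1} = O(|z|^{-1})$ and $e^{iA(z;\lambda)}$ tends to a finite limit as $|z|\to\infty$; hence the product is $O(|z|^{-1})$. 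Existence of the exterior radial limits for a.e. $\z$ is precisely what Lemma~\ref{Priv} supplies for $\xi$, and the other factors extend continuously, so this is routine. With these verifications in place the theorem follows.
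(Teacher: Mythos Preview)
Your proof is correct and follows essentially the same approach as the paper: combine the jump relation \eqref{eq:wwwc} for $\xi$ with the relation $e^{iA(\z_+)} = \sign(\omega(\z)-\lambda)e^{iA(\z_-)}$ coming from Sokhotski--Plemelj, and note that the remaining factor has no jump. Your additional verification that $\varphi_j^{\rm (ext)}\in\mathcal A^{\rm (ext)}$ (in particular the $O(|z|^{-1})$ decay) is a useful detail the paper leaves implicit.
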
 
  
  \begin{corollary}
Let $m=1$. Then $\varphi (z; \lambda)$ is given by formula  \e{simplephi:eq} and
  \begin{align}
\varphi^{(\rm ext)}(z; \lambda) 
= -\rho(\lambda)&\ 
  e^{-\pi i {\bf m} (\alpha(\lambda),\beta (\lambda))}
\beta z^{-1}
\xi(z; \lambda) \notag  \\
&\ \times(1- \alpha (\lambda)/z)^{ -1/2} (1-\beta (\lambda)/z)^{ -1/2}.
\label{eq:m=1}
\end{align}
\end{corollary}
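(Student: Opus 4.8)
The plan is to simply specialize Theorem \ref{R-H} to $m = 1$ and then simplify the resulting expression for $\varphi^{(\rm ext)}$ using the identities already collected in the paper, most notably \eqref{eq:ML}. First I would recall that for $m = 1$ we have $\Gamma(\lambda) = (\alpha(\lambda), \beta(\lambda))$ with a single arc, so $\rho_j(\lambda)$ reduces to $\rho(\lambda)$ as in \eqref{simplerho:eq}, and $\varphi(z;\lambda)$ reduces to the function \eqref{simplephi:eq}: indeed, in \eqref{eq:EL2} the product over $l$ contains only the term $l = 1$, which combines with the explicit factor $(1 - z\ov{\beta_1})^{-1}$ to give $(1 - z\ov\alpha)^{-1/2}(1-z\ov\beta)^{-1/2}$, matching \eqref{simplephi:eq} after writing $\ov\alpha = 1/\alpha$, $\ov\beta = 1/\beta$ for $\alpha, \beta \in \mathbb T$.

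Next I would write out the external function from Theorem \ref{R-H} with $m = 1$:
\begin{align*}
\varphi^{(\rm ext)}(z;\lambda) = \rho(\lambda)\, e^{-\pi i {\bf m}(\alpha,\beta)}\, \xi(z;\lambda)\,(1 - z\ov\beta)^{-1}(1 - \alpha/z)^{-1/2}(1 - \beta/z)^{1/2}.
\end{align*}
The only thing to reconcile with \eqref{eq:m=1} is the factor $(1 - z\ov\beta)^{-1}(1 - \beta/z)^{1/2}$, which must be turned into $-\beta z^{-1}(1 - \beta/z)^{-1/2}$. For this I would use $\ov\beta = 1/\beta$ and the elementary manipulation $1 - z/\beta = -(z/\beta)(1 - \beta/z)$, hence $(1 - z\ov\beta)^{-1} = (1 - z/\beta)^{-1} = -(\beta/z)(1 - \beta/z)^{-1}$; multiplying by $(1-\beta/z)^{1/2}$ gives $-(\beta/z)(1-\beta/z)^{-1/2}$, which is exactly the claimed factor. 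Tracking the branch of the square root here is the one point needing a little care, but since both $\varphi^{(\rm ext)}$ and the right side of \eqref{eq:m=1} are defined by the same analytic continuation prescription from the region near $z = \infty$ (where all factors are close to $1$ and the square roots equal $1$), the identity of the two analytic functions follows from their agreement there.

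I do not anticipate a genuine obstacle: this Corollary is a direct transcription of Theorem \ref{R-H} combined with the single-arc identity \eqref{eq:ML}. The mildest subtlety is bookkeeping of the square-root branches and the sign produced by $1 - z/\beta = -(z/\beta)(1-\beta/z)$, which accounts for the overall minus sign in \eqref{eq:m=1}; this is routine once one fixes that all the relevant powers are normalized to equal $1$ as $|z| \to \infty$, consistently with the estimate $\varphi^{(\rm ext)}(z) = O(|z|^{-1})$ built into the class $\mathcal A^{(\rm ext)}$.
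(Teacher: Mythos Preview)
Your proposal is correct and matches the paper's (implicit) approach: the corollary is stated without proof, as an immediate specialization of Theorem~\ref{R-H} to $m=1$, and your manipulation $(1-z\ov\beta)^{-1}(1-\beta/z)^{1/2} = -\beta z^{-1}(1-\beta/z)^{-1/2}$ is exactly the simplification required. One minor remark: you do not actually need the identity \eqref{eq:ML} here---only $\ov\beta = 1/\beta$ (i.e., $|\beta|=1$) is used---so you can drop that reference.
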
 
  
\begin{remark}
In the construction above, we used only that
the functions $\varphi_j(z; \lambda)$ 
are analytic in $\mathbb D$ and have boundary values on $\mathbb T$ for a.e.  
$\z\in\mathbb T$. In the next subsection we will see that, 
actually, $\varphi_j(\cdot; \lambda) \in {\mathbb H}^{p}$ 
for every $p<1$ and a.e. $\lambda\in (\gamma_{1}, \gamma_2)$.
\end{remark} 
  
\begin{remark}
In the spirit of Definition \ref{RK}, Theorem \ref{R-H} states that $\varphi_j, j=1, 2, \dots, m$, 
are generalized 
eigenfunctions of the operator $T(\omega)$. We certainly do not claim that the pairs 
$\varphi_j, \varphi_j^{(\rm ext)}$ give all solutions of the Riemann-Hilbert problem \eqref{eq:Ri-Hi}. 
For example,  other solutions can be obtained by multiplying 
the functions $\varphi_j (z)$ and $\varphi_j^{(\rm ext)}(z )$ constructed above by a common factor $ (z-\z_{0})^{-n}$ where the point 
$\z_{0}\in{\mathbb T}$
is arbitrary and $n=1,2,\ldots$.
\end{remark} 
  
  \subsection{  Uniform estimates near the unit circle}
The first assertion supplements Lemma~\ref{Priv}. Its proof does not require Condition~~\ref{mult:cond}.

\begin{lemma}
Let the function $\xi(z) = \xi(z; \lambda)$ be   defined   for a.e. $\lambda\in{\mathbb R}$  by \eqref{xi:eq}. Then for any $p <2$ and any bounded interval 
$X\subset  {\mathbb R}$ the estimate 
\begin{equation}
\sup_{z \in {\mathbb D} }  \int_{X} |\xi(z; \lambda)|^p   d\lambda <\infty
\label{eq:hz}\end{equation}
holds. 

\end{lemma}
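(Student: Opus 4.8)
The plan is to reduce the bound on $\xi$ to a bound on the function $\xi(0;\lambda)=|\xi(0;\lambda)|$ together with a comparison between $|\xi(z;\lambda)|$ for general $z\in\mathbb D$ and the boundary values $|\xi(\z_+;\lambda)|$, or more efficiently to exploit the fact that $\ln|\xi(z;\lambda)|$ is a harmonic function of $z$ coming from a Poisson integral. Recall from \eqref{xi:eq} and \eqref{Poisson:eq} that
\begin{align*}
|\xi(z;\lambda)| = \exp\bigg(-\frac12\int_{-\pi}^{\pi}\ln|\omega(e^{i\tau})-\lambda|\,\mathcal P(r,\theta-\tau)\,d\tau\bigg),
\end{align*}
so that, with the measure $\mu(t)=\mu(t;z)$ of Lemma~\ref{repres}, we get the clean formula
\begin{align*}
|\xi(z;\lambda)| = \exp\bigg(-\frac12\int_{\gamma_1}^{\gamma_2}\ln|t-\lambda|\,d\mu(t)\bigg).
\end{align*}
This is exactly (half of) the expression already analysed in the proof of Theorem~\ref{integr:lem}: indeed $|\xi(z;\lambda)|^2 = (1-r^2)|((T-\lambda)^{-1}K_z,K_z)|$ in the notation of Lemma~\ref{repres} with $\varepsilon=0$.

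The key steps are then as follows. First I would fix a bounded interval $X$ and, exactly as in the proof of Theorem~\ref{integr:lem}, split $X$ into pieces on which one applies Jensen's inequality to the normalised measure $\mu(b_0)^{-1}d\mu(t)$ on an interval $(\gamma_1,b_0)$ (and symmetrically near $\gamma_2$ if $\gamma_2<\infty$), after first disposing of the part of the integral over $t$ bounded away from $X$, which contributes a factor bounded uniformly in $z$. Applying Jensen to $\exp\bigl(-\tfrac{p}{2}\int\ln|t-\lambda|\,d\mu(t)\bigr)$ gives a bound by $\mu(b_0)^{-1}\int|t-\lambda|^{-p\mu(b_0)/2}\,d\mu(t)$, and by Lemma~\ref{reprX} we have $\mu(b_0)<1$, so upon integrating in $\lambda$ over $X$ and using Fubini the result is finite provided $p\mu(b_0)/2<1$, i.e. $p<2/\mu(b_0)$, which is certainly guaranteed by $p<2$ since $\mu(b_0)<1$. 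The uniformity in $z\in\mathbb D$ needs a small extra observation, namely that the constant $\mu(b_0)$ can be chosen uniformly in $z$: this is where I would invoke a quantitative form of Lemma~\ref{reprX}, since $\mu(b_0;z)$ stays bounded away from $1$ uniformly for $z$ in $\mathbb D$ once $b_0$ is bounded away from $\gamma_2$ — concretely, $1-\mu(b_0;z)\ge\inf_{r<1}\int_{\omega(e^{i\tau})\ge b_0}\mathcal P(r,\theta-\tau)\,d\tau$, and the Poisson kernel estimate shows this infimum is positive because the set $\{\omega\ge b_0\}$ has positive Lebesgue measure.

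The main obstacle I anticipate is precisely this uniformity in $z$: the argument in Theorem~\ref{integr:lem} was stated for a fixed $z$, so I must make sure the Jensen exponent $p\mu(b_0;z)$ can be taken below $1$ by a single choice of $p$ and $b_0$ working for all $z\in\mathbb D$ simultaneously. The resolution is the uniform lower bound on $1-\mu(b_0;z)$ just described; once $b_0$ is fixed with $\delta:=\inf_{z\in\mathbb D}(1-\mu(b_0;z))>0$, any $p<1/(1-\delta)$ (hence in particular any $p<2$ after possibly enlarging $b_0$ toward $\gamma_2$ so that $\mu(b_0;z)<1/2+\epsilon$ uniformly is not needed — $p<2$ suffices as soon as $\sup_z\mu(b_0;z)<1$) makes the double integral finite with a bound independent of $z$. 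The part near the other endpoint $\gamma_1$ (relevant only if $X$ meets $[\gamma_1,\gamma_2]$ there) is handled symmetrically using $\inf_z\mu(a_0;z)>0$; if $X\cap[\gamma_1,\gamma_2]=\varnothing$ the estimate is trivial since then $\ln|t-\lambda|$ is bounded below on $X\times[\gamma_1,\gamma_2]$. Assembling these pieces gives \eqref{eq:hz}.
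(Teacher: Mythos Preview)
Your overall strategy---Poisson representation, change of variables to the measure $\mu(\cdot;z)$, Jensen, then Fubini---is the right one, and it is exactly what the paper does. But you are making it much harder than necessary by importing the splitting from Theorem~\ref{integr:lem}. That splitting was forced there because the exponent in Jensen was $-p$ and one needed $p>1$; here the exponent is only $-p/2$, so applying Jensen directly to the \emph{full} probability measure $\mu(\cdot;z)$ on $[\gamma_1,\gamma_2]$ already gives
\[
|\xi(z;\lambda)|^p \le \int_{\gamma_1}^{\gamma_2} |t-\lambda|^{-p/2}\,d\mu(t;z),
\]
and after Fubini the inner integral $\int_X |t-\lambda|^{-p/2}\,d\lambda$ is bounded uniformly in $t$ once $p<2$. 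The bound is then automatically uniform in $z$ since $\mu(\cdot;z)$ has total mass $1$. No splitting, no choice of $b_0$, no uniformity issue.

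More importantly, your proposed ``resolution'' of the uniformity obstacle contains a false claim: $\inf_{z\in\mathbb D}(1-\mu(b_0;z))>0$ is simply not true. If $\z_0\in\mathbb T$ lies in a set where $\omega<b_0$ on a neighbourhood, then as $z\to\z_0$ the Poisson kernel concentrates at $\z_0$ and $\mu(b_0;z)\to 1$. Your splitting argument can in fact be made uniform, but not for the reason you give: the point is that the exponent $p\mu(b_0;z)/2\le p/2<1$ is automatically controlled, and the stray factor $\mu(b_0;z)^{-1}$ from Jensen cancels against the total mass $\int_{\gamma_1}^{b_0} d\mu=\mu(b_0;z)$ after Fubini. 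So the argument survives, but only once you drop the incorrect claim and do the bookkeeping honestly---at which point you may as well use the full measure and avoid the detour altogether.
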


\begin{proof} 
According to \eqref{Poisson:eq} and \eqref{xi:eq}, for $z = re^{i\theta}$ we have 
\begin{align*}
|\xi(z; \lambda)| 
= \exp\biggl(
-\frac{1}{2}\int_{-\pi}^\pi \ln|\omega(e^{i\tau}) - \lambda| \mathcal P(r, \theta - \tau) d\tau
\biggr).
\end{align*}
Therefore, arguing as in the proof of Lemma \ref{repres}, we obtain 
\begin{align*}
|\xi(z; \lambda)|
= \exp\biggl(-\frac{1}{2}\int_{\gamma_1}^{\gamma_2} 
\ln |t-\lambda| d\mu(t; z)\biggr), 
\end{align*}
with the measure $\mu(t; z)$ defined in 
\eqref{eq:AB}. 
Since $\mu(t; z)$ is normalized, it follows from Jensen's inequality that for any $p >0$ we have 
the bound
\begin{align*}
|\xi(z; \lambda)|^p\le \int\limits_{\gamma_1}^{\gamma_2} |t-\lambda|^{-\frac{p}{2}}d\mu(t; z)
\end{align*}
uniformly in $z\in \mathbb D$. Integrating it over a finite interval $ X \subset \mathbb R$, we see that
\begin{align}\label{eq:hz1}
\int\limits_{X} |\xi(z; \lambda)|^p d\lambda 
\le \int_{\gamma_1}^{\gamma_2} \biggl( \int\limits_{X} |t-\lambda|^{-\frac{p}{2}}
d\lambda\biggr)d\mu(t; z)
  \le C_p.
\end{align}
%
 with a constant $C_p = C_p(X)>0$, if $p <2$. This leads to \eqref{eq:hz}. 
\end{proof}

\begin{corollary}\label{cor:xi}
For every $p <2$ the function 
$\xi(\ \cdot\ ; \lambda)$ belongs to $\mathbb H^p$, a.e. $\lambda$, and 
its norm, as a function of $\lambda$ belongs to $L^p_{\rm loc}$.
\end{corollary}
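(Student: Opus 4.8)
The plan is to deduce both assertions directly from the uniform estimate \eqref{eq:hz} of the preceding lemma, the only extra input being the elementary fact that membership in $\mathbb H^p$ is detected by boundedness of circular means. First I recall from \eqref{eq:FF1} that $\xi(z;\lambda)=\exp(-Q(z;\lambda)/2)$ is analytic and nowhere vanishing on $\mathbb D$, so for every $p>0$ the function $z\mapsto|\xi(z;\lambda)|^p=|\exp(-pQ(z;\lambda)/2)|$ is subharmonic on $\mathbb D$. Consequently the means $r\mapsto\int_{\mathbb T}|\xi(r\z;\lambda)|^p\,d{\mathbf m}(\z)$ are non-decreasing in $r\in(0,1)$, and $\xi(\cdot;\lambda)\in\mathbb H^p$ if and only if the supremum of these means over $r<1$ is finite, in which case this supremum equals $\|\xi(\cdot;\lambda)\|_p^p$.

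Next I fix $p<2$ and a bounded interval $X\subset\mathbb R$, and I let $C_p$ denote the finite quantity $\sup_{z\in\mathbb D}\int_X|\xi(z;\lambda)|^p\,d\lambda$ provided by \eqref{eq:hz}. Applying \eqref{eq:hz} with $z=r\z$ gives $\int_X|\xi(r\z;\lambda)|^p\,d\lambda\le C_p$ uniformly in $r\in(0,1)$ and $\z\in\mathbb T$. Integrating this over $\z\in\mathbb T$ against $d{\mathbf m}$ and using Fubini's theorem (the integrand being non-negative and measurable), I obtain
\[
\int_X\Big(\int_{\mathbb T}|\xi(r\z;\lambda)|^p\,d{\mathbf m}(\z)\Big)\,d\lambda\le C_p,\qquad r\in(0,1).
\]
Letting $r\to1$ and invoking the monotonicity of the inner integral together with the monotone convergence theorem, I get
\[
\int_X\|\xi(\cdot;\lambda)\|_p^p\,d\lambda
=\int_X\Big(\sup_{r<1}\int_{\mathbb T}|\xi(r\z;\lambda)|^p\,d{\mathbf m}(\z)\Big)\,d\lambda\le C_p.
\]

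From this inequality both claims drop out: the integrand $\|\xi(\cdot;\lambda)\|_p^p$ is finite for a.e.\ $\lambda\in X$, so by the first paragraph $\xi(\cdot;\lambda)\in\mathbb H^p$ for a.e.\ $\lambda$; and since $X$ was an arbitrary bounded interval, $\lambda\mapsto\|\xi(\cdot;\lambda)\|_p^p$ lies in $L^1_{\rm loc}(\mathbb R)$, i.e.\ $\lambda\mapsto\|\xi(\cdot;\lambda)\|_p$ lies in $L^p_{\rm loc}(\mathbb R)$. I do not anticipate any real obstacle here; the single point that needs a moment's care is the interchange of the supremum over $r$ with the $\lambda$-integral, and this is legitimate precisely because the means of the subharmonic function $|\xi(\cdot;\lambda)|^p$ increase with $r$, making that supremum a monotone limit.
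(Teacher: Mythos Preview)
Your proof is correct and follows essentially the same route as the paper's: integrate the uniform bound \eqref{eq:hz} over $\z\in\mathbb T$, swap the order of integration, and then pass to the limit $r\to 1$ using monotonicity of the circular means together with the Monotone Convergence Theorem. The only cosmetic difference is that you justify the monotonicity directly via subharmonicity of $|\xi(\cdot;\lambda)|^p$, whereas the paper cites \cite[Ch.~1, Theorem~1.5]{Duren} for the same fact.
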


\begin{proof}
Set
\[
M_p(r; \lambda) = \bigg[\int\limits_{\mathbb T} 
|\xi(r\z; \lambda)|^p d{\mathbf m}(\z)\bigg]^{\frac{1}{p}}.
\]
Integrating the inequality \eqref{eq:hz1} where $z = r\z$, $r\in (0, 1)$,  
 over $\z\in\mathbb T$ and
 exchanging the order of integration, we see that
 \begin{align}
\int\limits_{X} M_p(r; \lambda)^p d\lambda
\le C_p.
\label{eq:MH}\end{align}
According to \cite[Ch.1, Theorem 1.5]{Duren}, the function $M_p(r; \lambda)$ is non-decreasing in $r\in (0, 1)$. 
Therefore,  by the Monotone Convergence Theorem, the bound \e{eq:MH} remains true for
the limit $M_p(1; \lambda) := \lim_{r\to 1-0} M_p(r; \lambda)$.
Thus $M_p(1; \lambda)<\infty$, a.e. 
$\lambda$, so that $\xi(\ \cdot\ ; \lambda)\in \mathbb H^p$ a.e. $\lambda$, 
and its norm $M_p(1; \ \cdot\ )$ is in $L^p_{\rm loc}$.
\end{proof}
 
Now we consider the eigenfunctions of the operator $T$.

\begin{theorem}
 Let Condition \ref{mult:cond} be satisfied for some interval 
$\Lambda\subset (\gamma_1, \gamma_2)$. Then the function $\varphi_j(\ \cdot\ ; \lambda)$ 
defined by  formula \e{eq:EL2}, belongs to the space
$\mathbb H^p$  for every $p <1$ and a.e. $\lambda\in \Lambda$.  
\end{theorem}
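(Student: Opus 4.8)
The plan is to bound the $\mathbb{H}^p$-norm of $\varphi_j(\,\cdot\,;\lambda)$ by writing formula \e{eq:EL2} as a product of three factors: the outer function $\xi(z;\lambda)$, the single ``resonant'' pole factor $(1-z\ov{\beta_j(\lambda)})^{-1}$, and the balanced product $\prod_{l=1}^m(1-z\ov{\alpha_l(\lambda)})^{-1/2}(1-z\ov{\beta_l(\lambda)})^{1/2}$. Since Condition~\ref{mult:cond} guarantees that for a.e.\ $\lambda\in\Lambda$ the endpoints $\alpha_l(\lambda),\beta_l(\lambda)$ are distinct points of $\mathbb{T}$ with pairwise disjoint closures of the arcs, the third factor is bounded on $\mathbb{T}$ (its only possible singularities are at the $\beta_l$, but they are cancelled up to a bounded factor by the matching $\alpha_l$ or $\beta_l$ terms, exactly as in the computation leading to \e{od:eq} and \e{cj:eq}). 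So the only genuine singularities of $\varphi_j(\z;\lambda)$ in $\z\in\mathbb{T}$ are: the zero of $\omega(\z)-\lambda$ entering through $\xi$, and the simple pole at $\z=\beta_j(\lambda)$.

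The key estimate is a H\"older-type splitting. Fix $p<1$ and choose exponents $q,s>1$ with $1/q+1/s=1/p$ and additionally $q<2$ so that Corollary~\ref{cor:xi} applies to $\xi$, and $s<1$... wait — one needs $s$ large but the pole factor $(1-z\ov{\beta_j})^{-1}$ lies in $\mathbb{H}^s$ only for $s<1$, which is incompatible with $s>1$. The correct route is instead to exploit that a simple pole at a boundary point has the property that $(1-z\ov{\beta_j})^{-1}\in\mathbb{H}^s$ for all $s<1$, and $\xi(\,\cdot\,;\lambda)\in\mathbb{H}^q$ for all $q<2$; then for any $p<1$ pick $q<2$ and $s<1$ with $1/q+1/s = 1/p$, which is possible because $1/2+1 = 3/2 > 1 > 1/p$ is false — so one must be more careful. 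The honest statement is: given $p<1$, choose $q$ slightly below $2$ and $s=pq/(q-p)$; since $p<1$ we have $1/p>1>1/2$, and one checks $1/s = 1/p-1/q > 1/p - 1/2 > 1/2 > 0$, so $s$ is a positive real, and for $q$ close enough to $2$ and $p$ close to $1$ we get $s<1$. For $p$ bounded away from $1$ one simply takes a smaller $q$. So for \emph{every} $p<1$ there is a valid pair $(q,s)$ with $q<2$, $s<1$. Then H\"older's inequality on $\mathbb{T}$ gives
\begin{align*}
\int_{\mathbb{T}}|\varphi_j(r\z;\lambda)|^p\,d\mathbf{m}(\z)
\le C(\lambda)^p\Big(\int_{\mathbb{T}}|\xi(r\z;\lambda)|^q\,d\mathbf{m}(\z)\Big)^{p/q}
\Big(\int_{\mathbb{T}}|1-r\z\ov{\beta_j(\lambda)}|^{-s}\,d\mathbf{m}(\z)\Big)^{p/s},
\end{align*}
where $C(\lambda)$ absorbs $\rho_j(\lambda)$ and the sup over $\z\in\mathbb{T}$ of the bounded third factor.

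Next I would control the two factors on the right uniformly in $r\in(0,1)$. For the $\xi$-factor, Corollary~\ref{cor:xi} (applied with exponent $q<2$) says $\xi(\,\cdot\,;\lambda)\in\mathbb{H}^q$ for a.e.\ $\lambda$, so $M_q(r;\lambda)$ is bounded by $M_q(1;\lambda)<\infty$ uniformly in $r$. For the pole-factor, the integral $\int_{\mathbb{T}}|1-r\z\ov{\beta_j}|^{-s}\,d\mathbf{m}(\z)$ with $|\beta_j|=1$ is a standard Hardy-space computation: since $s<1$, it is bounded uniformly in $r\le 1$ by a constant depending only on $s$ (this is just $\|(1-z\ov{\beta_j})^{-1}\|_{\mathbb H^s}^s<\infty$, which holds precisely because $s<1$). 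Hence $\sup_{r<1}\int_{\mathbb{T}}|\varphi_j(r\z;\lambda)|^p\,d\mathbf{m}(\z)<\infty$ for a.e.\ $\lambda\in\Lambda$, and since $\varphi_j(\,\cdot\,;\lambda)$ is analytic in $\mathbb{D}$ this means $\varphi_j(\,\cdot\,;\lambda)\in\mathbb{H}^p$. One should note (as in the proof of Corollary~\ref{cor:xi}) that the integral means $M_p(r;\lambda)$ are non-decreasing in $r$, so the supremum over $r$ equals the limit as $r\to1$, which gives membership in $\mathbb{H}^p$ with the correct norm.

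The main obstacle is simply the bookkeeping of exponents: verifying that for every $p<1$ one can actually find a H\"older pair $(q,s)$ with $q<2$ (to invoke Corollary~\ref{cor:xi}) and $s<1$ (so the simple boundary pole is in $\mathbb{H}^s$) satisfying $1/p=1/q+1/s$; this is a short arithmetic check since $1/p>1$ forces one of the reciprocals below $1/2$ only if $p$ is close to $1$, and for smaller $p$ there is even more room. A secondary point requiring care is confirming that the balanced product $\prod_l(1-z\ov{\alpha_l})^{-1/2}(1-z\ov{\beta_l})^{1/2}$ really is bounded on $\clos\mathbb{D}$ for a.e.\ $\lambda$: this uses Condition~\ref{mult:cond} — the closures of the arcs $(\alpha_l,\beta_l)$ being pairwise disjoint means each $\beta_l$ is matched by its own $\alpha_l$ factor giving a locally bounded ratio $|1-z\ov{\alpha_l}|^{-1/2}|1-z\ov{\beta_l}|^{1/2}$ near $\beta_l$, with all other factors regular there — essentially the same cancellation already exploited in deriving \e{cj:eq}.
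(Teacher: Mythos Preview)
Your proposal has two genuine errors, one geometric and one arithmetic.

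\textbf{The balanced product is not bounded.} You claim that
\[
B(z;\lambda)=\prod_{l=1}^m (1-z\ov{\alpha_l(\lambda)})^{-1/2}(1-z\ov{\beta_l(\lambda)})^{1/2}
\]
is bounded on $\clos\mathbb{D}$, with ``only possible singularities at the $\beta_l$, cancelled'' by matching factors. This is backwards: the \emph{negative} exponent sits on the $\alpha_l$-factor, so $B$ has a singularity of order $|z-\alpha_l|^{-1/2}$ at each $\alpha_l$, while at each $\beta_l$ it has a \emph{zero} of order $1/2$. Since the $\alpha_l$ and $\beta_l$ are distinct (Condition~\ref{mult:cond}), there is no cancellation at the $\alpha_l$, and $B$ is not bounded. (What you may be remembering is the cancellation at $\beta_j$ in the \emph{full} product $h_j(z;\lambda)=(1-z\ov{\beta_j})^{-1}B(z;\lambda)$: there the simple pole meets the $\beta_j$-zero of order $1/2$, leaving a singularity of order $-1/2$.)

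\textbf{The H\"older exponents do not close for $p$ near $1$.} Even if $B$ were bounded, your two-factor split requires $q<2$ (for $\xi$) and $s<1$ (for the simple pole) with $1/p=1/q+1/s$. But $1/q>1/2$ and $1/s>1$ force $1/p>3/2$, i.e.\ $p<2/3$. Your sentence ``for $q$ close to $2$ and $p$ close to $1$ we get $s<1$'' is false: in that regime $1/s\approx 1-1/2=1/2$, so $s\approx 2$. Thus your argument, as written, cannot cover $p\in[2/3,1)$.

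Both problems disappear if you group differently: set $h_j=(1-z\ov{\beta_j})^{-1}B$ as the paper does. Then $h_j$ has only order-$1/2$ singularities (at each $\alpha_l$ and at $\beta_j$), so $h_j(\,\cdot\,;\lambda)\in\mathbb H^q$ for every $q<2$. Now take $q=2p<2$ and apply H\"older with the pair $(q,q)$:
\[
\|\varphi_j(\,\cdot\,;\lambda)\|_p\le \rho_j(\lambda)\,\|\xi(\,\cdot\,;\lambda)\|_{2p}\,\|h_j(\,\cdot\,;\lambda)\|_{2p}<\infty
\]
for a.e.\ $\lambda\in\Lambda$, by Corollary~\ref{cor:xi}. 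This is exactly the paper's proof; the point is that the pole and the balanced product must be treated together so that the worst singularity is of order $-1/2$ rather than $-1$.
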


\begin{proof}
Denote
\begin{align*}
h_j(z; \lambda) = \big(1-z\ov{\beta_j (\lambda)}\big)^{-1}
 \prod_{l=1}^m \big(1-z \ov{\alpha_l (\lambda)}\big)^{-\frac{1}{2}}
\big(1-z\ov{\beta_{l}(\lambda)}\big)^{\frac{1}{2}},
\end{align*}
so that $\varphi_j(z; \lambda) = \rho(\lambda) \xi(z; \lambda) h_j(z; \lambda)$. 

Fix a $p <1$. 
Then by Corollary \ref{cor:xi}, 
$\|\xi(\ \cdot\ ;\lambda)\|_q<\infty, q = 2p<2$, a.e. $\lambda\in\Lambda$. 
Furthermore, it follows directly from the definition that also 
$\|h_j(\ \cdot\ ;\lambda)\|_{q} <\infty$, a.e. $\lambda\in \Lambda$. Thus, 
on a subset of $\Lambda$ of full measure, we have, by H\"older's inequality,  
\begin{align*}
\|\varphi_j(\ \cdot\ ; \lambda)\|_p\le \rho_j(\lambda)
\|\xi(\ \cdot\ ; \lambda)\|_q \|h_j(\ \cdot\ ; \lambda\|_q<\infty,
\end{align*} 
as required. 
\end{proof}

In contrast to Corollary \ref{cor:xi}, we cannot say anything about integrability of the 
norms $\|\varphi_j(\ \cdot\ ; \lambda)\|_p$, 
  $q = 2p$, 
in $\lambda$, since   $\|h_j(\ \cdot\ ; \lambda)\|_{q}$ are not bounded if the 
singularities of the function $h_j(\ \cdot\ ; \lambda)$  merge as $\lambda$ varies.

\subsection{A smooth (regular) symbol}

Let us now discuss two explicit examples. Both of them were mentioned in \cite{Ros2}. 
First we consider the regular symbol 
 \begin{align*}
 \omega_{\rm r} (\z)= (\z+\z^{-1})/2.
  \end{align*} 
By Theorem \ref{ac:thm} and Corollary \ref{specset} 
the spectrum of $T_{\rm r} = T(\omega_{\rm r})$ is absolutely continuous 
and coincides with $[-1,1]$. 
For every $\lambda\in (-1, 1)$ we set
\begin{align}\label{eq:zz}
\alpha = \lambda + i\sqrt{1-\lambda^2},\q
\beta =\bar{\alpha}= \lambda -  i\sqrt{1-\lambda^2},
\end{align}
  so $\Gamma(\lambda) = (\alpha, \beta)$. 
In particular, we see that, by Theorem~\ref{specuni:thm}, the spectrum of $T_{\rm r}$ is simple. 

Let us calculate the function $\varphi = \varphi_{\rm r}$ defined in \eqref{simplephi:eq}. 

\begin{lemma}
Let $\omega = \omega_{\rm r}$, as defined above. 
Then for every $\lambda\in (-1, 1)$ we have 
\begin{align}\label{phir:eq}
\varphi_{\rm r}(z;\lambda) 
= \sqrt{\frac{2}{\pi}}\frac{(1-\lambda^{2})^{\frac{1}{4}}}{1-2\lambda z + z^{2}}.
\end{align}
\end{lemma}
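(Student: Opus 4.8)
The plan is to evaluate the three factors in formula \eqref{simplephi:eq} separately and then multiply them. Throughout I fix $\lambda\in(-1,1)$ and use the points $\alpha,\beta$ from \eqref{eq:zz}, for which the elementary identities
\[
|\alpha|=|\beta|=1,\qquad \bar\alpha=\beta,\qquad \alpha\beta=1,\qquad \alpha+\beta=2\lambda,\qquad |\alpha-\beta|=2\sqrt{1-\lambda^2}
\]
hold. From $\alpha^{-1}=\bar\alpha=\beta$, $\beta^{-1}=\alpha$ and $\alpha\beta=1$ we get $\bigl(1-z/\alpha\bigr)\bigl(1-z/\beta\bigr)=1-(\alpha+\beta)z+z^2=1-2\lambda z+z^2$, so the product of the first two factors in \eqref{simplephi:eq} is $(1-2\lambda z+z^2)^{-1/2}$, with the branch of the root equal to $1$ at $z=0$. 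Likewise \eqref{simplerho:eq} gives $\rho(\lambda)=\bigl(\tfrac1{2\pi}\cdot 2\sqrt{1-\lambda^2}\bigr)^{1/2}=\pi^{-1/2}(1-\lambda^2)^{1/4}$.

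The substantive step is to compute $\xi(z;\lambda)$ from \eqref{xi:eq}. Writing $\omega_{\rm r}(\z)-\lambda=\dfrac{\z^2-2\lambda\z+1}{2\z}=\dfrac{(\z-\alpha)(\z-\beta)}{2\z}$ and using $|\z|=1$, one has on $\mathbb T$
\[
\ln|\omega_{\rm r}(\z)-\lambda|=-\ln2+\ln|\z-\alpha|+\ln|\z-\beta| .
\]
The constant term contributes $-\ln 2$ to the exponent because $\int_{\mathbb T}H(z\bar\z)\,d{\mathbf m}(\z)=1$. For the remaining two terms I would use that, for $a\in\mathbb T$, the function $1-\bar a z$ is zero-free in $\mathbb D$, has boundary modulus $|\z-a|$, and satisfies $\int_{\mathbb T}\ln|\z-a|\,d{\mathbf m}(\z)=0$ (the mean-value/Jensen formula); hence its logarithm is precisely the Schwarz integral of $\ln|\z-a|$, i.e.
\[
\exp\Bigl(\int_{\mathbb T}\ln|\z-a|\,H(z\bar\z)\,d{\mathbf m}(\z)\Bigr)=1-\bar a z .
\]
Applying this with $a=\alpha$ and $a=\beta$ and recalling $\bar\alpha=\beta$, $\bar\beta=\alpha$, $\alpha\beta=1$, $\alpha+\beta=2\lambda$, formula \eqref{xi:eq} yields
\[
\xi(z;\lambda)=\exp\Bigl(-\tfrac12\bigl(-\ln2+\log(1-\bar\alpha z)+\log(1-\bar\beta z)\bigr)\Bigr)
=\sqrt2\,(1-2\lambda z+z^2)^{-1/2},
\]
with the branch again normalized to $1$ at $z=0$, consistently with $\xi(0;\lambda)=\exp\bigl(-\tfrac12\int_{\mathbb T}\ln|\omega_{\rm r}-\lambda|\,d{\mathbf m}\bigr)=\sqrt2>0$.

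Multiplying the three factors then gives
\[
\varphi_{\rm r}(z;\lambda)=\pi^{-1/2}(1-\lambda^2)^{1/4}\cdot(1-2\lambda z+z^2)^{-1/2}\cdot\sqrt2\,(1-2\lambda z+z^2)^{-1/2}
=\sqrt{\tfrac{2}{\pi}}\,\frac{(1-\lambda^2)^{1/4}}{1-2\lambda z+z^2},
\]
which is \eqref{phir:eq}. The only point requiring genuine care is the evaluation of the outer function $\xi$: one must justify the identity $\exp\bigl(\int_{\mathbb T}\ln|\z-a|\,H(z\bar\z)\,d{\mathbf m}(\z)\bigr)=1-\bar a z$ for $a\in\mathbb T$ and keep the branches of the square roots consistent at $z=0$; everything else is elementary algebra based on $\alpha\beta=1$ and $\bar\alpha=\beta$.
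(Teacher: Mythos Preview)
Your proof is correct and follows the same overall plan as the paper: compute $\rho(\lambda)$, the product $(1-z/\alpha)(1-z/\beta)$, and $\xi(z;\lambda)$, then substitute into \eqref{simplephi:eq}. The only real difference is in the evaluation of $\xi$. The paper writes $2|\omega_{\rm r}(\z)-\lambda|=|g(\z;\lambda)|$ with $g(\z;\lambda)=\z^2-2\lambda\z+1$, splits $\ln|g|=\tfrac12(\ln g+\ln\bar g)$, and evaluates the resulting contour integrals by residues to obtain $Q(z;\lambda)=\ln g(z;\lambda)-\ln 2$. You instead split $\ln|\omega_{\rm r}(\z)-\lambda|=-\ln 2+\ln|\z-\alpha|+\ln|\z-\beta|$ and invoke directly the outer-function identity $\exp\bigl(\int_{\mathbb T}\ln|\z-a|\,H(z\bar\z)\,d{\mathbf m}(\z)\bigr)=1-\bar a z$ for $a\in\mathbb T$, which you justify via the uniqueness of outer functions (zero-free, correct boundary modulus, value $1$ at the origin). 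Your route is a bit more conceptual and avoids the explicit residue computation; the paper's route is more hands-on but self-contained. Either way one arrives at $\xi_{\rm r}(z;\lambda)=\sqrt{2}\,(1-2\lambda z+z^2)^{-1/2}$ and hence at \eqref{phir:eq}.
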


\begin{proof}
Let us find  functions \eqref{eq:FFp} and \eqref{eq:FF1}  for the symbol $\omega=\omega_{\rm r}$. 
Note that $2|\omega_{\rm r}(\z) - \lambda| = |g(\z; \lambda)|$,   where 
\begin{align*}
g(\z; \lambda) = \z^2 - 2\lambda\z + 1 = (\z - \alpha)(\z - \beta).
\end{align*}
The function $g(z; \lambda)$ is 
analytic in $\mathbb D$, $g(z; \lambda)\neq 0$ and $g(0; \lambda)=1$ so that
the function $\ln g(z; \lambda)$, fixed by the condition 
$\ln g(0; \lambda) = 0$, is also analytic in $\mathbb D$ for 
every $\lambda\in (-1, 1)$. Let us rewrite \eqref{eq:FFp}    as
\begin{align*}
Q(z; \lambda)=\frac{1}{4\pi i} \int_{\mathbb T} \Big( \ln g(\z; \lambda)  
+  \ln \overline{ g(  \z; \lambda)} - 2 \ln 2\Big)\frac{\z+z} {(\z-z) \z} \, d\z.
\end{align*}
This integral can be easily calculated by residues at the points 
$\z=z$ and $\z=0$. 
The first term containing 
$\ln g(\z; \lambda)$ 
equals $ \ln g(z; \lambda)$ 
and the third term containing 
$-2\ln 2$ equals $-\ln 2$. 
In the second integral we use  that $\overline{ g(  \z; \lambda)}=g(\overline \z; \lambda)$ and make the change of variables $\z\mapsto \z^{-1}$:
\[
\int_{\mathbb T}\ln \overline{ g(  \z; \lambda)}\frac{\z+z} {(\z-z) \z} \, d\z 
= \int_{\mathbb T}\ln g( \z; \lambda)\frac{1+\z z} {(1-\z z) \z} \, d\z.
\]
This integral equals zero because $\z=0$ is the only pole
 of the integrand in $\mathbb D$ and $g(0,\lambda)=1$. It follows that 
\begin{align*}
Q(z; \lambda) = \ln g(z; \lambda)-\ln 2,
\end{align*}
and hence according to  \eqref{eq:FF1} 
\begin{align*}
\xi_{\rm r}(z; \lambda) =\sqrt { \frac{2 }{ 1-2\lambda z + z^{2}}}. 
\end{align*}
The coefficient $\rho(\lambda)$ in  \eqref{simplephi:eq} is found from 
\eqref{simplerho:eq} and \e{eq:zz}:
\begin{align*}
\rho(\lambda) = \sqrt{\frac{1}{2\pi} |\alpha-\beta|} = \pi^{-\frac{1}{2}} (1-\lambda^2)^{\frac{1}{4}}.
\end{align*}
Substituting these formulas into \eqref{simplephi:eq},  we obtain \eqref{phir:eq} .
\end{proof}

Clearly, equation \eqref{eq:Ri-Hi}  for function \eqref{phir:eq} is satisfied with
\[
\varphi^{\rm (ext)}_{\rm r}(z)= (2\pi)^{-\frac{1}{2}} (1-\lambda^2)^{\frac{1}{4}}z^{-1}.
\]

Recall (see formula (10.11.31) in the book \cite{BE}) that
the function $(z^2-2\lambda z+1)^{-1}$ is the generating function of 
the Chebyshev polynomials $U_n$ of 
second kind. This means that 
\begin{align*}
\frac{1}{z^2-2\lambda z+1} = \sum\limits_{n=0}^\infty U_n(\lambda) z^n,\quad \lambda\in (-1, 1), \q
z\in \mathbb D.
\end{align*}
Set $p_n(z) = z^n$.
Since 
\begin{align*}
\sum_{n=0}^\infty z^n {\overline v}^n = K_v(z), 
\end{align*} 
the relation \eqref{simplee:eq} together with \eqref{phir:eq} 
implies that 
\begin{align*}
\frac{d}{d\lambda} \big( E(\lambda)p_n, p_m\big)
= \frac{2}{\pi} \sqrt{1-\lambda^2}\  U_n(\lambda) U_m(\lambda), \quad \lambda\in (-1, 1),
\end{align*} 
for    all $n, m = 0, 1, \dots$. 
This formula agrees with the expression for the spectral projection 
of the discrete Laplacian on $\ell^2(\mathbb Z_+)$ 
(see, e.g.,  \cite[Corollary III.12]{YafJac}), 
which is unitarily equivalent to $T_{\rm r}$. 

Note that $T_{\rm r}$ is the unique 
(up to trivial changes of variables) Toeplitz operator that is unitarily equivalent to a Jacobi operator.

\subsection{A singular symbol}

The simplest singular symbol is given by  the indicator 
$\omega_{\rm s} (\z) = \1_{(\z_{1}, \z_{2})} (\z)$ 
of an arc $(\z_{1}, \z_{2})\subset {\mathbb T}$, $(\z_{1}, \z_{2})\neq {\mathbb T}$,
so that 
\begin{align*}
\Gamma(\lambda) = (\z_2, \z_1) \q \mbox{for all} \q \lambda\in (0, 1),
\end{align*} 
and hence the spectrum of the operator $T_{\rm s} = T(\omega_{\rm s})$ is simple and it fills the interval $[0, 1]$. 
The eigenfunctions of this operator are calculated in the next lemma. Recall that the branch of the function $\ln (1+z)$ analytic for $z\in {\mathbb D}$ is fixed by  the condition $\ln 1 =0$.

\begin{lemma}
 For all 
$\lambda\in (0, 1)$, the eigenfunctions \eqref{eq:EL2} of the  operator  $T_{\rm s}$ 
are given  by the formula
\begin{equation}
\varphi_{\rm s} (z;\lambda) = \rho(\lambda) e^{-\pi\sigma (\lambda){\mathbf m}(\z_1,  \z_2)} (1-z / \z_1)^{-1/2-i\sigma (\lambda)} 
(1-z/\z_{2})^{-1/2+i\sigma (\lambda)},
\label{eq:RS}
\end{equation}
where   
 \begin{equation*}
 \sigma (\lambda)= \frac{1}{2\pi}\ln(\lambda^{-1}-1)
\end{equation*}
 and the numerical coefficient $\rho(\lambda) $ is given by \eqref{simplerho:eq}.
\end{lemma}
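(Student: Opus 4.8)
The plan is to parallel the treatment of the regular symbol in the previous subsection: locate the set $\Gamma(\lambda)$, evaluate the outer factor $\xi(z;\lambda)$ of \eqref{xi:eq} in closed form, and substitute into the simple-spectrum formula \eqref{simplephi:eq}. Here $\gamma_1=0$, $\gamma_2=1$, and for $\lambda\in(0,1)$ the condition $\omega_{\rm s}(\z)<\lambda$ holds precisely off the arc $(\z_1,\z_2)$, so $\Gamma(\lambda)=(\z_2,\z_1)$, as already noted. Thus in \eqref{simplephi:eq} one has $\alpha(\lambda)=\z_2$, $\beta(\lambda)=\z_1$, and by \eqref{simplerho:eq} the coefficient $\rho(\lambda)=\sqrt{|\z_1-\z_2|/(2\pi)}$ is here independent of $\lambda$.

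The heart of the proof is the evaluation of the integral in \eqref{xi:eq}. Since $|\omega_{\rm s}(\z)-\lambda|$ equals $1-\lambda$ on $(\z_1,\z_2)$ and $\lambda$ on $(\z_2,\z_1)$, and since $\ln(1-\lambda)-\ln\lambda=\ln(\lambda^{-1}-1)=2\pi\sigma(\lambda)$, one may write
\[
\ln|\omega_{\rm s}(\z)-\lambda| = \ln\lambda + 2\pi\sigma(\lambda)\,\1_{(\z_1,\z_2)}(\z),\q \z\in\mathbb T.
\]
Substituting into \eqref{xi:eq}, the constant term yields $\ln\lambda\cdot\int_{\mathbb T}H(z\ov\z)\,d{\mathbf m}(\z)=\ln\lambda$, while for the characteristic-function term I invoke the arc identity \eqref{intdm:eq} (the same computation that produced \eqref{Sf:eq}), which gives
\[
\int_{(\z_1,\z_2)}H(z\ov\z)\,d{\mathbf m}(\z)={\mathbf m}(\z_1,\z_2)+\frac{i}{\pi}\ln\frac{1-z/\z_1}{1-z/\z_2},
\]
the logarithm being the branch with $\ln 1=0$. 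Exponentiating the result, $\xi(z;\lambda)$ becomes an explicit product of a scalar, the factor $e^{-\pi\sigma(\lambda){\mathbf m}(\z_1,\z_2)}$, and the complex powers $(1-z/\z_1)^{-i\sigma(\lambda)}(1-z/\z_2)^{\,i\sigma(\lambda)}$.

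Finally I substitute this $\xi(z;\lambda)$ and the value of $\rho(\lambda)$ into \eqref{simplephi:eq}: multiplying $\xi(z;\lambda)$ by the factor $(1-z/\z_2)^{-1/2}(1-z/\z_1)^{-1/2}$ appearing there merges the powers at $\z_1$ into exponent $-\tfrac12-i\sigma(\lambda)$ and the powers at $\z_2$ into $-\tfrac12+i\sigma(\lambda)$, producing the structure of \eqref{eq:RS}; the remaining scalar prefactor is then read off directly.

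I expect the only delicate points to be bookkeeping ones: fixing consistent branches of the powers $(1-z/\z_j)^{\pm i\sigma(\lambda)}$ — harmless, since $z/\z_j\in\mathbb D$ keeps $1-z/\z_j$ in the open right half-plane — applying \eqref{intdm:eq} with the arc $(\z_1,\z_2)$ in its correct counterclockwise orientation, and keeping the scalar prefactor consistent with the normalization \eqref{simplerho:eq}. As a sanity check one may then verify, exactly as for the regular symbol, that $\varphi_{\rm s}(z;\lambda)$ together with the corresponding exterior function solves the Riemann-Hilbert relation \eqref{eq:Ri-Hi}.
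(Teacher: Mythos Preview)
Your proposal is correct and follows essentially the same route as the paper's own proof: both compute $\xi(z;\lambda)$ by reducing the integral in \eqref{xi:eq} to the arc identity \eqref{intdm:eq} and then substitute into \eqref{simplephi:eq}. The only cosmetic difference is that you write $\ln|\omega_{\rm s}(\z)-\lambda|=\ln\lambda+2\pi\sigma(\lambda)\1_{(\z_1,\z_2)}(\z)$ at the outset and evaluate a single arc integral, whereas the paper splits into two arc integrals over $(\z_1,\z_2)$ and $(\z_2,\z_1)$ before recombining; the resulting $\xi(z;\lambda)$ and the final substitution are identical.
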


\begin{proof} 
For the symbol $\omega = \omega_{\rm s}$,  the function \e{eq:FFp}
is given by
\begin{align}\label{lnln:eq}
Q(z; \lambda) = \ln(1-\lambda) \int\limits_{\z_1}^{\z_2} \frac{\z+z}{\z-z} d{\mathbf m}(\z)
+\ln\lambda \int\limits_{\z_2}^{\z_1} \frac{\z+z}{\z-z} d{\mathbf m}(\z). 
\end{align}
It follows from \eqref{intdm:eq}  that 
\begin{align*}
\int\limits_{\z_1}^{\z_2} \frac{\z+z}{\z-z} d{\mathbf m}(\z)
= {\mathbf m} (\z_1, \z_2) - \frac{i}{\pi} \ln\frac{1-z /\z_2 }{1-z /\z_1 },
\end{align*} 
where $2\pi {\mathbf m}(\z_1,  \z_2)$ is the length of the arc $(\z_1, \z_2)$, and 
\begin{align*}
\int\limits_{\z_2}^{\z_1} \frac{\z+z}{\z-z} d{\mathbf m}(\z)
= 1- {\mathbf m} (\z_1, \z_2) + \frac{i}{\pi} \ln\frac{1-z /\z_2}{1-z /\z_1 }.
\end{align*} 
 Consequently, the right-hand side of \eqref{lnln:eq} equals
\begin{align}
Q(z; \lambda) =\ln(1-\lambda) \,{\mathbf m}(\z_1, \z_2) 
+ &\ \ln\lambda \, (1-{\mathbf m}(\z_1, \z_2)) \nonumber \\
- &\ \frac{i}{\pi} \big(
\ln(1-\lambda) - \ln\lambda\big)\ln  \frac{1-z \z_2^{-1}}{1-z\z_1^{-1}} \nonumber \\
= &\ 
\ln\lambda 
+ 2\pi\sigma(\lambda) {\mathbf m}(\z_1, \z_2) 
-2 i\sigma(\lambda) 
\ln  \frac{1-z \z_2^{-1}}{1-z\z_1^{-1}}.
\label{eq:Qint}\end{align}
This yields the expression 
\begin{align*}
\xi(z; \lambda) = \lambda^{-\frac{1}{2}} e^{-\pi\sigma(\lambda) {\mathbf m}(\z_1, \z_2) }
(1-z / \z_1)^{-i\sigma (\lambda)} 
(1-z/\z_{2})^{i\sigma (\lambda)} 
\end{align*}
for function \e{eq:FF1}.
The coefficient $\rho(\lambda)$ is found from \eqref{simplerho:eq}.
Substituting these formulas into \eqref{simplephi:eq} we obtain \eqref{eq:RS}.
\end{proof}

Note that the functions $\varphi_{\rm s}$ are in $\mathbb H^p$ 
for any $p <2$, whereas the functions $\varphi_{\rm r}$ are in $\mathbb H^p$ for $p <1$ only.

Finally, we find an explicit expression for the function $\varphi^{\rm (ext)}_{\rm s} (z;\lambda) $. Suppose that $|z|> 1$. Then instead of \e{intdm:eq} we use
the identity \e{eq:tw},
and hence, quite similarly to \e{eq:Qint}, we find that
\[
 Q(z; \lambda) = - \ln\lambda - 2\pi\sigma(\lambda) {\mathbf m}(\z_1, \z_2) 
+ 2i\sigma(\lambda) 
\ln  \frac{1-z^{-1} \z_1 }{1- z^{-1} \z_2}.
\]
This yields an expression 
\begin{equation*}
\xi(z; \lambda) = \lambda^{\frac{1}{2}} e^{\pi\sigma(\lambda) {\mathbf m}(\z_1, \z_2) }
(1-z^{-1} \z_1)^{-i\sigma (\lambda)} 
(1- z^{-1} \z_{2})^{i\sigma (\lambda)} 
\end{equation*}
for function \e{eq:FF1}.  Substituting this expression into \e{eq:m=1} 
 and taking into account that 
\begin{align*}
e^{-\pi i {\mathbf m}(\z_2, \z_1)} = - e^{\pi i {\mathbf m}(\z_1, \z_2)},
\end{align*}
we find that
\[
\varphi^{\rm (ext)}_{\rm s} (z;\lambda) = \rho (\lambda) \lambda^{1/2}  
 e^{\pi(\sigma(\lambda) + i) {\mathbf m}(\z_1, \z_2) }  \z_1z ^{-1} 
(1- \z_1/z)^{ -1/2-i\sigma (\lambda)} 
(1-\z_{2}/z)^{ -1/2+i\sigma (\lambda)}.
\]


\section{Piecewise continuous symbols}\label{piecewise:sect}

In this section we focus on piecewise continuous symbols $\omega$. 
The results obtained here are used in \cite{SY}. For such symbols, 
Condition \ref{mult:cond} 
can be verified for appropriate spectral intervals $\Lambda$ and spectral multiplicity can be 
expressed via the counting functions of intervals of monotonicity 
and of the jumps of $\omega$. 
The resulting adaptation of Theorem \ref{specuni:thm} is stated as Theorem ~\ref{PWC:thm}. 

\subsection{Exceptional sets}  
First we explain in exact terms what we mean by a piecewise continuous symbol $\omega$. 
Below we adopt the notation $\omega'(\z) = d\omega(e^{i\theta})/d\theta$ where 
$\z = e^{i\theta}, \theta\in \mathbb R$.

\begin{cond} \label{SM:cond}
\begin{enumerate}[{\rm(i)}]

\item 
$\omega = \overline{\omega}\in L^\infty(\mathbb T)$ and $\omega$ is not a constant function, 
\item 
There exists a finite set ${\sf S} = \{\eta_k\}\subset\mathbb T$ 
such that $\omega\in C^1(\mathbb T\setminus {\sf S})$, 
\item 
The limits  
$\omega(\eta_k\pm 0) = \lim_{\varepsilon\to \pm 0} \omega(\eta_k e^{i\varepsilon})$ 
exist for all $\eta_k\in {\sf S}$.  
For every $\eta_k\in {\sf S}$, either $\omega(\eta_k+0)\neq  \omega(\eta_k-0)$ or
  $\omega(\eta_k+0)= \omega(\eta_k-0)$ but
the derivative 
$\omega'$ is not continuous at $\eta_k$.

\end{enumerate}
\end{cond}

This condition is assumed to be satisfied throughout this section.

Let ${\sf S}^{(\pm)}$ be the subset of  those $\eta_k \in {\sf S}$ for which 
\begin{equation*}
\pm (\omega(\eta_k - 0)-\omega (\eta_k + 0))>0,\q \eta_k\in {\sf S}^{(\pm)},
 \end{equation*}
 and let ${\sf S}_0$ be the set of those $\eta_k$  where 
$\omega(\eta_k - 0) = \omega(\eta_k + 0)$ but the derivative $\omega'(\z)$ is not continuous 
at the point $\eta_k$ 
%
, so 
${\sf S}$ is the disjoint union  
\begin{align*}
\sf {\sf S} = {\sf S}^{(+)}\cup {\sf S}^{(-)}\cup {\sf S}_{0}.
\end{align*}
We associate with every discontinuity $\eta_k\in {\sf S}^{(\pm)}$ the interval (the ``jump"):
\begin{align}\label{Lk:eq}
\Lambda_{k} = [\omega(\eta_k\pm 0), \omega (\eta_k \mp 0)],\q \eta_k\in {\sf S}^{(\pm)}.
\end{align}
Introduce the set 
${\sf S}_{\rm cr}\subset {\mathbb T}\setminus {\sf S}$  
of critical points where $\omega'(\z)=0$. 
The image $\Lambda_{\rm cr}=\omega ({\sf S}_{\rm cr})$ of this set consists of critical values of 
$\omega$. By Sard's theorem, the Lebesgue measure $|\Lambda_{\rm cr}|=0$.       

We introduce also the ``threshold" set $\Lambda_{\rm thr}$ which consists of all values 
$\omega(\eta_k \pm 0)$, $\eta_k\in {\sf S}$,  
and define the exceptional set 
\[
\Lambda_{\rm exc} =\Lambda_{\rm cr} \cup \Lambda_{\rm thr}.
\]
Since the set  $\Lambda_{\rm thr}$ is finite, $|\Lambda_{\rm exc}| = 0$. 

\begin{lemma} 
 The set  $\Lambda_{\rm exc}$ is closed. 
\end{lemma}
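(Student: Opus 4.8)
The plan is to show that $\Lambda_{\rm exc}$ contains all of its limit points. Since $\Lambda_{\rm exc}=\Lambda_{\rm cr}\cup\Lambda_{\rm thr}$ and $\Lambda_{\rm thr}$ is a \emph{finite} set, hence closed, the task reduces to the following: given $\lambda_n\in\Lambda_{\rm cr}$ with $\lambda_n\to\lambda$, prove $\lambda\in\Lambda_{\rm exc}$. Indeed, for a general sequence in $\Lambda_{\rm exc}$, either infinitely many terms lie in $\Lambda_{\rm thr}$ — in which case finiteness of $\Lambda_{\rm thr}$ forces the limit into $\Lambda_{\rm thr}$ — or else only finitely many do, and after discarding them we are in the reduced situation.

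So I would write $\lambda_n=\omega(\z_n)$ with $\z_n\in{\sf S}_{\rm cr}$, meaning $\z_n\in{\mathbb T}\setminus{\sf S}$ and $\omega'(\z_n)=0$. By compactness of ${\mathbb T}$, pass to a subsequence so that $\z_n\to\z\in{\mathbb T}$, and split according to whether $\z\notin{\sf S}$ or $\z\in{\sf S}$. If $\z\notin{\sf S}$, then by Condition~\ref{SM:cond}(ii) both $\omega$ and $\omega'$ are continuous at $\z$, so $\omega'(\z)=\lim_n\omega'(\z_n)=0$, hence $\z\in{\sf S}_{\rm cr}$ and $\lambda=\omega(\z)\in\Lambda_{\rm cr}\subset\Lambda_{\rm exc}$. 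If $\z=\eta_k\in{\sf S}$ for some $k$, then since $\z_n\in{\mathbb T}\setminus{\sf S}$ we have $\z_n\ne\eta_k$, so $\z_n=\eta_k e^{i\varepsilon_n}$ with $\varepsilon_n\to0$ and $\varepsilon_n\ne0$ for large $n$; passing to a further subsequence we may assume $\varepsilon_n\to+0$ (the case $\varepsilon_n\to-0$ being identical). By Condition~\ref{SM:cond}(iii) the one-sided limit $\omega(\eta_k+0)$ exists, so $\lambda=\lim_n\omega(\eta_k e^{i\varepsilon_n})=\omega(\eta_k+0)$, which belongs to $\Lambda_{\rm thr}\subset\Lambda_{\rm exc}$ by the definition of the threshold set.

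The only genuine point — and the place where one could be misled into thinking the statement is trivial — is that $\Lambda_{\rm cr}=\omega({\sf S}_{\rm cr})$ need \emph{not} be closed on its own: critical points of $\omega$ may accumulate at a discontinuity $\eta_k$, and then $\omega$-values of those critical points accumulate at $\omega(\eta_k\pm0)$, which lies outside $\Lambda_{\rm cr}$ in general. The resolution is precisely that such a limit value is a threshold value, so it is reabsorbed by $\Lambda_{\rm thr}$; this is why the union, rather than $\Lambda_{\rm cr}$ alone, is closed. Beyond identifying this mechanism, the argument is a routine compactness-and-continuity chase and should require no further calculation.
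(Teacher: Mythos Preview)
Your proof is correct and follows essentially the same route as the paper: reduce to a sequence in $\Lambda_{\rm cr}$, lift to critical points $\z_n$, extract a convergent subsequence, and split on whether the limit lies in ${\sf S}$ or not. If anything, you are slightly more careful than the paper in the case $\z\in{\sf S}$, where you explicitly pass to a one-sided subsequence to identify the limit as $\omega(\eta_k\pm 0)\in\Lambda_{\rm thr}$.
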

 
\begin{proof} 
Let  $\lambda_{n} \in \Lambda_{\rm exc}$ 
and $\lambda_{n} \to \lambda_0$ as 
$n\to\infty$. We may suppose that $\lambda_{n} \in \Lambda_{\rm cr}$  and consider a sequence of points $\z_{n}\in {\sf S}_{\rm cr}$   such 
that $\omega (\z_{n} ) =\lambda_{n}$. Extracting, if necessary, a subsequence, 
we assume that $\z_n\to \z_{0}$  
as $n\to\infty$. If $\z_{0}\in {\sf S}$, 
then $\omega (\z_0 \pm 0)\in  \Lambda_{\rm thr}$.  If $\z_{0}\in {\mathbb T}\setminus {\sf S}$, we use that $\omega' (\z_n) = 0$ and that $\omega' (\z)$ is continuous at the point $\z_{0}$. Thus,
 $\omega'(\z_0)   = 0$, that is, 
$\lambda_{0}= \omega (\z_0)\in \Lambda_{\rm cr}$.
\end{proof}

We need the following elementary fact about the roots of the equation $\omega(\z) = \lambda$.
 
\begin{lemma} \label{functXY}  
For all $\lambda\in(\gamma_{1}, \gamma_{2})\setminus \Lambda_{\rm exc}$, 
the set
\begin{equation*}
\mathcal N_{\lambda}= \{\z\in  \mathbb T\setminus {\sf S}: \omega(\z) = \lambda\}, 
\end{equation*} 
is finite.
\end{lemma}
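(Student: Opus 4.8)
The plan is to argue by contradiction, using that $\mathbb T\setminus{\sf S}$ splits into finitely many open arcs and exploiting the $C^1$-regularity of $\omega$ on each of them. Since ${\sf S}$ is finite, we may write $\mathbb T\setminus{\sf S} = I_1\cup\cdots\cup I_N$ where the $I_n$ are pairwise disjoint open arcs, and $\clos I_n\setminus I_n\subset{\sf S}$ for each $n$. It suffices to show that $\mathcal N_\lambda\cap I_n$ is finite for every $n$. Suppose, for some $n$, the set $\mathcal N_\lambda\cap I_n$ is infinite. As $\clos I_n\subset\mathbb T$ is compact, this set has an accumulation point $\z_0\in\clos I_n$, and we may choose a sequence of distinct points $\z_k\in\mathcal N_\lambda\cap I_n$ with $\z_k\to\z_0$. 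We distinguish two cases according to whether $\z_0$ lies in the interior $I_n$ or is an endpoint.

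If $\z_0\in I_n$, then $\z_0\in\mathbb T\setminus{\sf S}$, so $\omega$ is $C^1$ in a neighbourhood of $\z_0$, and by continuity $\omega(\z_0)=\lim_k\omega(\z_k)=\lambda$. Writing $\z=e^{i\theta}$ locally and ordering the arguments of the $\z_k$ near that of $\z_0$, between any two consecutive roots of $\omega(e^{i\theta})=\lambda$ Rolle's theorem produces a point where $\omega'$ vanishes; these critical points also converge to $\z_0$, and since $\omega'$ is continuous at $\z_0$ we get $\omega'(\z_0)=0$, i.e. $\z_0\in{\sf S}_{\rm cr}$. Hence $\lambda=\omega(\z_0)\in\Lambda_{\rm cr}\subset\Lambda_{\rm exc}$, contradicting the assumption $\lambda\notin\Lambda_{\rm exc}$.

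If $\z_0$ is an endpoint of $I_n$, then $\z_0\in{\sf S}$, and the $\z_k$ approach $\z_0$ from one fixed side within $I_n$. By Condition~\ref{SM:cond}(iii) the corresponding one-sided limit $\omega(\z_0\pm 0)$ exists, and therefore $\lambda=\lim_k\omega(\z_k)=\omega(\z_0\pm 0)\in\Lambda_{\rm thr}\subset\Lambda_{\rm exc}$, again a contradiction. Thus $\mathcal N_\lambda\cap I_n$ is finite for each $n$, and summing over $n$ gives that $\mathcal N_\lambda$ is finite.

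The only step requiring a little care is the interior case: one must be sure that "infinitely many roots accumulating at $\z_0$'' genuinely forces $\omega'(\z_0)=0$, which is where the mean value / Rolle argument together with the continuity of $\omega'$ at $\z_0\in\mathbb T\setminus{\sf S}$ is used; everything else is a routine compactness and case-splitting argument. I do not anticipate any serious obstacle here.
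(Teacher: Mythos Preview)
Your proposal is correct and follows essentially the same contradiction argument as the paper: find an accumulation point of roots and show it forces $\lambda$ into $\Lambda_{\rm thr}$ or $\Lambda_{\rm cr}$. The only minor difference is that in the interior case the paper dispenses with Rolle's theorem and simply observes that, since $\omega(\z_n)=\omega(\z_0)$ for all $n$, the difference quotients $(\omega(\z_n)-\omega(\z_0))/(\z_n-\z_0)$ are all zero, so their limit $\omega'(\z_0)$ is zero directly; your Rolle-based variant works just as well but is slightly less direct.
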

 
\begin{proof} 
   Suppose,  on the contrary, 
 that there exists an infinite sequence 
 $\z_{n}\in \mathbb T\setminus {\sf S}$ 
 such that $\omega(\z_{n})=\lambda$. We may assume that 
   $\z_{n}\to \z_{0}$ as $ n\to\infty$. 
Since $\lambda\notin \Lambda_{\rm thr}$, we see that 
$\z_0\in \mathbb T\setminus {\sf S}$ whence  
$\omega(\z_0) =   \lambda$.  Thus $\omega(\z_n) =\omega(\z_0) $ for all $n$. 
As $\omega\in C^1(\mathbb T\setminus {\sf S})$, we have 
\begin{align*}
\omega'(\z_0)= \lim\limits_{n\to\infty}\frac{\omega(\z_n) - \omega(\z_0)}{\z_n-\z_0} = 0, 
\end{align*}  
 i.e.,  $\z_0\in {\sf S}_{\rm cr}$, which contradicts the assumption $\lambda\notin \Lambda_{\rm cr}$.  
 This proves the claim.
 \end{proof}
  
Along with $\mathcal N_{\lambda}$ define the sets          
\begin{align*}
\mathcal N^{(\pm)}_{\lambda}
= \{\z\in\mathcal N_{\lambda} : \mp\omega'(\lambda)>0\},
\end{align*}
and   consider the counting functions
\begin{equation}
n_{\lambda} = \#\{\mathcal N_{\lambda}\}, \q  
n^{(\pm)}_{\lambda} = \# \{\mathcal N^{(\pm)}_{\lambda}\},\q \lambda\notin \Lambda_{\rm exc}. 
\label{eq:count}
\end{equation}  
According to Lemma~\ref{functXY}  these functions take finite values.
  

\subsection{Counting functions}
Let us fix an interval $\Lambda = (\lambda_{1}, \lambda_{2}) $ such that
\begin{equation} 
\Lambda \subset (\gamma_{1}, \gamma_{2})\setminus  \Lambda_{\rm exc}
\label{eq:TH}
\end{equation} 
and consider its preimage $\omega^{-1}(\Lambda)$. According to \e{eq:TH}  we have
\begin{align*}
\omega^{-1}(\Lambda)\subset \mathbb T\setminus\big({\sf S}\cup {\sf S}_{\rm cr}\big).
\end{align*} 
The open set $\omega^{-1}(\Lambda)$ is a union of disjoint open 
arcs such that $\omega'(\z)\neq 0 $ on each 
such arc $\delta$. At the endpoints of each $\delta$ the function $\omega$ takes the values 
$\lambda_1$ and $\lambda_2$, and hence $\omega(\delta) = \Lambda$. 

Denote by $\delta^{(\pm)}_k = \delta^{(\pm)}_k(\Lambda), k = 1, 2, \dots$, 
the arcs on which $\mp\omega'(\z) >0$ so that
\begin{align*}
\omega^{-1}(\Lambda)
= \bigcup_{k=1}^{n^{(+)}} \delta^{(+)}_k  
\cup \bigcup_{k=1}^{n^{(-)}} \delta^{(-)}_k,
\end{align*}
where 
\begin{align*}
  n^{(\pm)} = n^{(\pm)} (\Lambda) = \#\{ \delta_{k}^{(\pm)}(\Lambda)\}.
\end{align*}  
As the next lemma shows, 
the number $n^{(\pm)} (\Lambda)$ is finite.
 
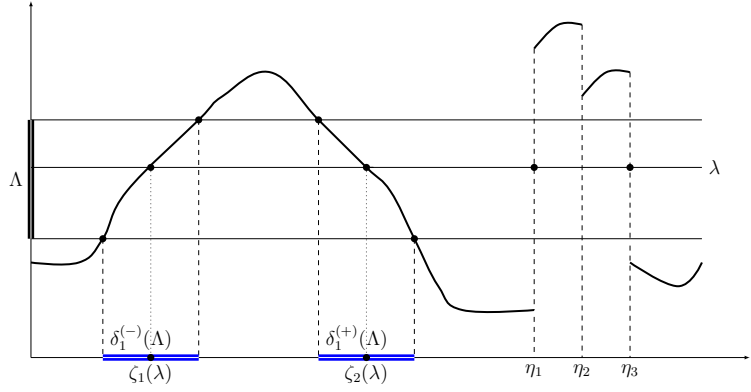
\begin{figure}
\resizebox{10cm}{!}{
\begin{tikzpicture}
\draw [very thick] plot[smooth, tension=.7] coordinates 
{(-5,-2) (-4,-2) (-3.5,-1.5) (-3,-0.5) (-1.5,1) (-1,1.5) 
(0,2) (1,1) (2,0) (2.5,-0.5) (3,-1.5) (3.5,-2.5) (4,-3) (5.5,-3)};

\draw [very thick] plot[smooth, tension=.7] coordinates {  (5.5,2.5)(6.0,3.0) (6.5,3)};
\draw [very thick] plot[smooth, tension=.7] coordinates { (6.5,1.5)(7,2)(7.5,2)};

\draw  [very thick]plot[smooth, tension=.7] coordinates {(7.5,-2) (8.5,-2.5) (9,-2)};
\draw (-5,1) node (v4) {} -- (9,1);
\draw (-5,-1.5) node (v3) {} -- (9,-1.5);
\draw[ultra thin, -latex] (-5,-4) node (v1) {} -- (10,-4) node (v2) {};
\draw [ultra thin, -latex] (-5,-4) -- (-5,3.5);
\draw [double][ultra thick](-5,-1.5) --  (-5,1)
node[pos=.5, left] {$\Lambda$};
\draw[dashed] (5.5,2.5) -- (5.5,-3) node (v11) {};
\draw[dashed] (7.5,2) -- (7.5,-1.5) node (v12) {};
\draw [dashed](-3.5,-1.5) node (v16) {} -- (-3.5,-4) node (v5) {};
\draw[dashed] (-1.5,1) -- (-1.5,-4) node (v6) {};
\draw[dashed] (1,1) -- (1,-4) node (v7) {};
\draw[dashed] (3,-1.5) -- (3,-4) node (v8) {};
\draw [double][ultra thick, blue](-3.5,-4) -- (-1.5,-4)
node[pos=.4, above, black] {$\delta^{(-)}_1(\Lambda)$};
\draw  [double][ultra thick, blue ](1,-4) -- (3,-4)
node[pos=.4, above, black] {$\delta^{(+)}_1(\Lambda)$};
\draw (-5,0)  -- (9,0)node[right] {$\lambda$};; 
\draw (2,0) node (v9) {} -- (v9) -- (2,0) node (v10) {} -- cycle;
\draw[dotted] (2, 0) node (v18) {} -- (2,-4) node (v14) {};
\draw[dashed] (v11) -- (5.5,-4) node (v15) {};
\draw[dashed] (v12) -- (7.5,-4);
 \coordinate[label=below:$\zeta_1(\lambda)$] (A) at (-2.5, -4);
\coordinate[label=below:$\zeta_2(\lambda)$] (A) at (2, -4);
\coordinate[label=below:$\eta_1$] (A) at (5.5, -4);
\coordinate[label=below:$\eta_2$] (A) at (6.5, -4);
\coordinate[label=below:$\eta_3$] (A) at (7.5, -4);
\draw [dotted](-2.5,0) node (v17) {} -- (-2.5,-4) node (v13) {};

\node at (-3.5,-1.5)[circle,fill,inner sep=1.5pt]{}; 
\node at (3,-1.5)[circle,fill,inner sep=1.5pt]{}; 
\node at (1,1)[circle,fill,inner sep=1.5pt]{}; 
\node at (-1.5,1)[circle,fill,inner sep=1.5pt]{}; 
 \node at (v17)[circle,fill,inner sep=1.5pt]{}; 
 \node at (v18)[circle,fill,inner sep=1.5pt]{}; 
  \node at (5.5, 0)[circle,fill,inner sep=1.5pt]{}; 
   \node at (7.5, 0)[circle,fill,inner sep=1.5pt]{}; 
   \node at (v13)[circle,fill,inner sep=1.5pt]{}; 
    \node at (2, -4)[circle,fill,inner sep=1.5pt]{}; 
\draw [dashed](6.5,3) -- (6.5,-4);
\end{tikzpicture}}
\caption{
Example: 
$\omega^{-1}(\Lambda) = \delta^{(+)}_1(\Lambda)\cup\delta^{(-)}_1(\Lambda)$;
$\mathcal N^{(-)}_\lambda = \{\zeta_1(\lambda)\}$, 
$\mathcal N^{(+)}_\lambda = \{\zeta_2(\lambda)\}$, 
${\sf S}^{(-)} = {\sf S}^{(-)}(\Lambda) = \{\eta_1\}$, 
${\sf S}^{(+)} = \{\eta_2, \eta_3\}$, 
${\sf S}^{(+)}(\Lambda) = \{\eta_3\}$. }
\end{figure}

\begin{lemma}
Let Condition \ref{SM:cond} hold, and 
let the counting function $n^{(\pm)}_\lambda$ be defined by formula \e{eq:count}. Then
\begin{equation}
  n^{(\pm)}_{\lambda} = n^{(\pm)} (\Lambda)   
 \label{eq:count1}\end{equation} 
for   all $\lambda\in\Lambda$. In particular, the function 
$n^{(\pm)}_{\lambda}$ is a finite constant
on the interval $\Lambda$. 
\end{lemma}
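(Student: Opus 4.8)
The plan is to show that the restriction of $\omega$ to each of the arcs $\delta_k^{(\pm)}(\Lambda)$ making up $\omega^{-1}(\Lambda)$ is a bijection onto $\Lambda$, so that every $\lambda\in\Lambda$ has exactly one preimage in each such arc, with the sign of $\omega'$ at that preimage matching the label of the arc. First I would fix $\lambda\in\Lambda$ and observe that, by \e{eq:TH}, any $\z\in\mathbb T$ with $\omega(\z)=\lambda$ automatically lies in $\omega^{-1}(\Lambda)\subset\mathbb T\setminus({\sf S}\cup {\sf S}_{\rm cr})$; in particular $\z\notin {\sf S}$, so $\mathcal N_\lambda$ coincides with the full level set $\{\z\in\mathbb T:\omega(\z)=\lambda\}$, which is therefore partitioned among the arcs $\delta_k^{(\pm)}(\Lambda)$.

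Next I would examine a single arc $\delta=\delta_k^{(\pm)}(\Lambda)$. On $\delta$ the function $\omega$ is $C^1$ and $\omega'$ does not vanish; since $\delta$ is connected and $\omega'$ is continuous there, the intermediate value theorem forces $\omega'$ to have constant sign on $\delta$, hence $\omega|_\delta$ is strictly monotone. As recalled in the paragraph preceding the lemma, $\omega$ attains the values $\lambda_1$ and $\lambda_2$ at the endpoints of $\delta$ and $\omega(\delta)=\Lambda$; combined with strict monotonicity this makes $\omega|_\delta\colon\delta\to\Lambda$ a continuous bijection. Consequently the equation $\omega(\z)=\lambda$ has exactly one solution $\z\in\delta$, and at that solution $\mp\omega'(\z)>0$ precisely when $\delta=\delta_k^{(\pm)}(\Lambda)$, i.e. the solution belongs to $\mathcal N_\lambda^{(\pm)}$.

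Putting these two observations together, I would obtain, for each choice of sign, a bijection between the index set $\{1,\dots,n^{(\pm)}(\Lambda)\}$ and $\mathcal N_\lambda^{(\pm)}$, sending $k$ to the unique solution of $\omega(\z)=\lambda$ lying in $\delta_k^{(\pm)}(\Lambda)$. This gives $n^{(\pm)}_\lambda=n^{(\pm)}(\Lambda)$, which is \e{eq:count1}. Since the right-hand side does not depend on $\lambda\in\Lambda$, the function $\lambda\mapsto n^{(\pm)}_\lambda$ is constant on $\Lambda$, and it is finite because $n^{(\pm)}_\lambda\le n_\lambda<\infty$ by Lemma~\ref{functXY}.

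I do not expect any serious obstacle: the only mildly delicate input is the assertion $\omega(\delta)=\Lambda$ together with the correct behaviour of $\omega$ at the endpoints of $\delta$, which is where one genuinely uses that the arcs $\delta_k^{(\pm)}(\Lambda)$ are the connected components of the open set $\omega^{-1}(\Lambda)$ (so their endpoints lie on $\partial\,\omega^{-1}(\Lambda)$, where $\omega\in\{\lambda_1,\lambda_2\}$). Since this structural description was already established just before the lemma, the argument reduces to the elementary monotonicity-and-counting discussion above.
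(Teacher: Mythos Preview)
Your argument is correct and is essentially the paper's proof spelled out in more detail: both use that on each arc $\delta_k^{(\pm)}$ the derivative $\omega'$ has constant sign and $\omega(\delta_k^{(\pm)})=\Lambda$, so each arc contains exactly one point of $\mathcal N_\lambda^{(\pm)}$, and finiteness comes from Lemma~\ref{functXY}. The only difference is that you make explicit the monotonicity/bijection reasoning and the fact that $\mathcal N_\lambda\subset\omega^{-1}(\Lambda)$, which the paper leaves implicit.
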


\begin{proof}
Since $\mp\omega' (\z)>0$ on each $\delta^{(\pm)}_k$, 
and $\omega(\delta^{(\pm)}_k) = \Lambda$, 
each arc $\delta^{(\pm)}_k$ contains exactly 
one point $\z\in\mathcal N^{(\pm)}_{\lambda}$ for any $\lambda\in \Lambda$. 
This implies equality \e{eq:count1}. 
By Theorem \ref{functXY} the set $\mathcal N^{(\pm)}_{\lambda}$ is finite, and hence both sides 
of \eqref{eq:count1} are finite. 
\end{proof}

Consider now the singular points. 
Define the counting function $s_\lambda^{(\pm)}$ of the intervals \eqref{Lk:eq}:
\begin{align*}
  s_\lambda^{(\pm)} = \#\{\Lambda_k: \lambda\in \Lambda_k, \eta_k\in {\sf S}^{(\pm)}\}.
\end{align*}
Since $\omega (\eta_{k}\pm 0)\not\in \Lambda$ for all $\eta_{k}\in {\sf S}$, 
each interval 
$\Lambda_k$ either contains $\Lambda$ or is 
disjoint from $\Lambda$. 
Let ${\sf S}(\Lambda)\subset {\sf S}$ be the set of those points $\eta_k$, 
for which $\Lambda\subset \Lambda_k$. 
Introduce also the notation 
${\sf S}^{(\pm)}(\Lambda) = {\sf S}(\Lambda)\cap {\sf S}^{(\pm)}$ 
and the counting function 
\begin{align}
s^{(\pm)}(\Lambda) = 
\#\{ {\sf S}^{(\pm)}(\Lambda)\}.
\label{eq:countZ}\end{align} 
It is clear that $s_\lambda^{(\pm)}$ is constant on $\Lambda$ and 
\begin{align}\label{eq:count2}
s_\lambda^{(\pm)} = s^{(\pm)}(\Lambda),\q \lambda\in \Lambda.
\end{align}
Note that ${\sf S}(\Lambda) = {\sf S}^{(+)}(\Lambda)\cup{\sf S}^{(-)}(\Lambda)$. 
All these objects are illustrated in Fig.~2 where $\z_{k}(\lambda)$, $k = 1, 2,$ 
are the solutions of the equation $\omega(\z)=\lambda$.

\subsection{ Spectral multiplicity} 
 
For piecewise continuous symbols, the number $m$ of arcs in \e{gammaf:eq} can be calculated 
in terms of the counting functions.

\begin{figure}
\resizebox{10cm}{!}{
\begin{tikzpicture}
 
\draw [very thick] plot[smooth, tension=.7] coordinates 
{(-5,-2) (-4,-2) (-3.5,-1.5) (-3,-0.5)};

\draw [very thick, -triangle 60](-3,-0.5) -- (-1.5,1);


\draw [very thick] plot[smooth, tension=.7] coordinates 
{(-1.5,1) (-1,1.5) 
(0,2) (1,1)};
\draw[very thick, -triangle 60](1,1) -- (2,0);

\draw [very thick] plot[smooth, tension=.7] coordinates 
{(2,0) (2.5,-0.5) (3,-1.5) (3.5,-2.5) (4,-3) (5.5,-3)};

\draw [very thick] plot[smooth, tension=.7] coordinates {  (5.5,2.5)(6.0,3.0) (6.5,3)};
\draw [very thick] plot[smooth, tension=.7] coordinates { (6.5,1.5)(7,2)(7.5,2)};

\draw  [very thick]plot[smooth, tension=.7] coordinates {(7.5,-2) (8.5,-2.5) (9,-2)};
\draw (-5,1) node (v4) {} -- (9,1);
\draw (-5,0)  -- (9,0)node[right] {$\lambda$};
\draw (-5,-1.5) node (v3) {} -- (9,-1.5);
\draw[ultra thin, -latex] (-5,-4) node (v1) {} -- (10,-4);
\draw [ultra thin, -latex] (-5,-4) -- (-5,3.5);
\draw [double][ultra thick](-5,-1.5) --  (-5,1)
node[pos=.5, left] {$\Lambda$};

\draw[very thick, -triangle 60] (5.5,-3) -- (5.5,0){};
\draw[very thick] (5.5,0) -- (5.5,2.5) node (v11){};


\draw[very thick](6.5,1.5)--(6.5,3);

\draw[very thick, -triangle 60](7.5,2)--(7.5,0);
\draw[very thick](7.5,0)--(7.5,-2);

\draw [double][ultra thick, blue](-5,-4) -- (-2.5,-4);

\draw [double][ultra thick, blue](2,-4) -- (5.5,-4);

\draw  [double][ultra thick, blue ](7.5,-4) -- (9,-4);

\draw (2,0) node (v9) {} -- (v9) -- (2,0) node (v10) {} -- cycle;
\draw[dashed] (2, 0) node (v18) {} -- (2,-4) node (v14) {};
\draw[dashed] (v11) -- (5.5,-4) node (v15) {};
\draw[dashed] (v12) -- (7.5,-4);

 \coordinate[label=below:$\beta_1(\lambda)$] (A) at (-2.5, -4);
\coordinate[label=below:$\alpha_2(\lambda)$] (A) at (2, -4);
\coordinate[label=below:$\beta_2(\lambda)$] (A) at (5.5, -4);
\coordinate[label=below:$\alpha_1(\lambda)$] (A) at (7.5, -4);
\draw [dashed](-2.5,0) node (v17) {} -- (-2.5,-4) node (v13) {};

 \node at (v17)[circle,fill,inner sep=1.5pt]{}; 
 \node at (v18)[circle,fill,inner sep=1.5pt]{}; 
  \node at (5.5, 0)[circle,fill,inner sep=1.5pt]{}; 
   \node at (7.5, 0)[circle,fill,inner sep=1.5pt]{}; 
  \node at (-5,0)[circle,fill,inner sep=1.5pt]{}; 
\end{tikzpicture}}
\caption{ }
\end{figure}

\begin{theorem}\label{count:thm}  
Let Condition \ref{SM:cond} hold, and let the 
interval $\Lambda= (\lambda_1, \lambda_2)$ satisfy condition \e{eq:TH}. Then
for each $\lambda\in\Lambda$, the set $\Gamma(\lambda)$ defined by \e{eq:mult} is the union \e{gammaf:eq}
 of finitely many  open arcs 
$(\alpha_{j} (\lambda), \beta_{j}(\lambda))
\subset \mathbb T \setminus {\sf S} (\Lambda)$
such that their closures are disjoint. The 
number $m= m(\Lambda) $ of these arcs does not depend on $\lambda \in\Lambda$ and, for both signs $``\pm"$, 
\begin{align}\label{count:eq}
m(\Lambda) = n^{(\pm)}(\Lambda) + s^{(\pm)}(\Lambda)
\end{align} 
where $n^{(\pm)}(\Lambda) $ and $ s^{(\pm)}(\Lambda)$ 
are defined by \e{eq:count1} and \e{eq:countZ}, respectively.
\end{theorem}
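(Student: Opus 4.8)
The plan is to describe $\Gamma(\lambda)$ near each of its boundary points and then to match these boundary points one-to-one with the points counted by $n^{(\pm)}(\Lambda)$ and $s^{(\pm)}(\Lambda)$. Fix $\lambda\in\Lambda$; by \eqref{eq:TH} we have $\lambda\notin\Lambda_{\rm cr}$ and $\lambda\notin\Lambda_{\rm thr}$. First I would work on the open set $\mathbb{T}\setminus{\sf S}$, on which $\omega\in C^1$. There $\{\z\in\mathbb{T}\setminus{\sf S}:\omega(\z)<\lambda\}$ is relatively open, and every point $\z_0$ of its relative boundary is a zero of $\omega(\cdot)-\lambda$; since $\lambda\notin\Lambda_{\rm cr}$, we have $\omega'(\z_0)\ne 0$, so $\z_0\in\mathcal{N}_\lambda$ and $\omega-\lambda$ changes sign at $\z_0$. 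By Lemma~\ref{functXY} the set $\mathcal{N}_\lambda$ is finite, so, after also removing the finitely many points of ${\sf S}$, the set $\Gamma(\lambda)$ differs by a finite set (contained in $\mathcal{N}_\lambda\cup{\sf S}$, hence of measure zero) from a finite union of open arcs $(\alpha_j(\lambda),\beta_j(\lambda))$ at whose endpoints $\omega$ crosses the level $\lambda$; since $\omega-\lambda$ keeps a constant sign between consecutive such ``crossing points'', these arcs alternate, as one goes around $\mathbb{T}$, with the complementary arcs where $\omega>\lambda$. This is the representation \eqref{gammaf:eq}, and it reduces the theorem to counting and classifying the endpoints $\alpha_j,\beta_j$.

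Next I would classify a left endpoint $\alpha$ of a component of $\Gamma(\lambda)$ (the right endpoints being treated symmetrically): by construction $\omega<\lambda$ immediately counterclockwise of $\alpha$ and $\omega\ge\lambda$ immediately clockwise of it. If $\alpha\notin{\sf S}$, then $\omega(\alpha)=\lambda$ by continuity, and since $\omega$ decreases through $\lambda$ at $\alpha$ while $\omega'(\alpha)\ne0$, we get $\omega'(\alpha)<0$, i.e. $\alpha\in\mathcal{N}^{(+)}_\lambda$. If $\alpha\in{\sf S}$, then the one-sided limits satisfy $\omega(\alpha-0)\ge\lambda\ge\omega(\alpha+0)$, and both inequalities are strict because $\lambda\notin\Lambda_{\rm thr}$; hence $\lambda$ lies in the interior of $\Lambda_\alpha=[\omega(\alpha+0),\omega(\alpha-0)]$, so $\alpha\in{\sf S}^{(+)}$, and since each $\Lambda_k$ either contains $\Lambda$ or is disjoint from it, we in fact have $\Lambda\subset\Lambda_\alpha$, that is $\alpha\in{\sf S}^{(+)}(\Lambda)$. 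Conversely, every point of $\mathcal{N}^{(+)}_\lambda$ and of ${\sf S}^{(+)}(\Lambda)$ is the left endpoint of exactly one component of $\Gamma(\lambda)$. As $\mathcal{N}^{(+)}_\lambda\subset\mathbb{T}\setminus{\sf S}$ and ${\sf S}^{(+)}(\Lambda)\subset{\sf S}$ are disjoint, this gives a bijection of $\{\alpha_j(\lambda)\}$ with $\mathcal{N}^{(+)}_\lambda\cup{\sf S}^{(+)}(\Lambda)$, so that, by \eqref{eq:count1} and \eqref{eq:count2},
\[
m \;=\; n^{(+)}_\lambda + s^{(+)}_\lambda \;=\; n^{(+)}(\Lambda) + s^{(+)}(\Lambda).
\]
The same argument applied to right endpoints, with $\mathcal{N}^{(-)}_\lambda$ and ${\sf S}^{(-)}(\Lambda)$, yields $m = n^{(-)}(\Lambda) + s^{(-)}(\Lambda)$, which is \eqref{count:eq}; in particular $m$ is finite and, since neither right-hand side depends on $\lambda$, it is constant on $\Lambda$.

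Finally I would verify the two remaining assertions. The closures of the arcs $(\alpha_j,\beta_j)$ are pairwise disjoint: if two of them shared an endpoint $p$, then $\omega-\lambda$ would be negative immediately on both sides of $p$, contradicting that $p\in\mathcal{N}_\lambda\cup{\sf S}(\Lambda)$ is a genuine sign-change point (here one uses $\lambda\notin\Lambda_{\rm cr}\cup\Lambda_{\rm thr}$ once more to rule out a local extremum or a ``non-straddling'' jump of $\omega$ at $p$). And $(\alpha_j,\beta_j)\cap{\sf S}(\Lambda)=\varnothing$, because every point of ${\sf S}(\Lambda)={\sf S}^{(+)}(\Lambda)\cup{\sf S}^{(-)}(\Lambda)$ was shown above to be an endpoint, hence not an interior point, of a component of $\Gamma(\lambda)$. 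The only delicate part of the proof is the classification in the second paragraph — the bookkeeping of the one-sided limits $\omega(\eta_k\pm0)$ and of the sign of $\omega'$ at the continuous crossings, which is what makes the left and right endpoints land consistently in the ``$+$'' and ``$-$'' families; once Lemma~\ref{functXY} and the already established equalities \eqref{eq:count1}, \eqref{eq:count2} are available, the rest is routine.
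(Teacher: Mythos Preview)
Your proof is correct and follows essentially the same approach as the paper: both arguments identify the left endpoints $\{\alpha_j(\lambda)\}$ with $\mathcal N^{(+)}_\lambda\cup{\sf S}^{(+)}(\Lambda)$ and the right endpoints $\{\beta_j(\lambda)\}$ with $\mathcal N^{(-)}_\lambda\cup{\sf S}^{(-)}(\Lambda)$, and then invoke \eqref{eq:count1} and \eqref{eq:count2}. The only cosmetic difference is that the paper packages the alternation of left and right endpoints by joining each jump $(\eta_k,\omega(\eta_k-0))$ to $(\eta_k,\omega(\eta_k+0))$ by a vertical segment, producing a single closed continuous curve on the cylinder $\mathbb T\times\mathbb R$ whose downward and upward crossings of the horizontal line at height $\lambda$ must alternate; you obtain the same alternation directly from the sign-change analysis of $\omega-\lambda$.
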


\begin{proof} 
Join the points $(\eta_k, \omega(\eta_k-0))\in  \mathbb T\times \mathbb R$ 
and $(\eta_k, \omega(\eta_k+0))\in  \mathbb T\times \mathbb R$ for all 
$\eta_k\in \mathbb {\sf S}^{(+)}\cup{\sf S}^{(-)}$ with straight segments. These 
segments, together with the graph 
$\{(\z, \omega(\z)): \z\in \mathbb T\setminus {\sf S}\}\subset \mathbb T\times \mathbb R$, 
form a closed non-self-intersecting continuous curve $\mathcal C$ 
on the cylinder $\mathbb T\times \mathbb R$. 
 
Since $\lambda\in \Lambda$, 
the curve $\mathcal C$  crosses the straight line 
$\{(\z, \lambda): \z\in\mathbb T\}$ 
at the points of the sets $\mathcal N_\lambda$ and ${\sf S}(\Lambda)$ only. As $\mathcal C$ is continuous, 
the points of intersection in the downward direction, denoted 
by $\alpha_j(\lambda)\in\mathbb T$, alternate with 
those in the upward direction, denoted by $\beta_j(\lambda)\in \mathbb T$, 
see Fig.~3 for illustration. Therefore the quantities of the downward and upward points are the same:
\begin{align*}
m_\lambda := \#\{\alpha_j(\lambda), j=1, 2, \dots\} = \#\{\beta_j(\lambda), j = 1, 2, \dots\}. 
\end{align*} 
It follows that up to a set of measure zero, the set 
$\Gamma(\lambda)$ consists of $m_\lambda$ disjoint open intervals  
$(\alpha_j(\lambda), \beta_j(\lambda))$ 
with disjoint closures. 
It is easy to see that,  for all $\lambda\in \Lambda$,
 \[
 \{\alpha_{1}(\lambda), \alpha_{2}(\lambda), \ldots\}=\mathcal N^{(+)}_\lambda\cup {\sf S}^{(+)}(\Lambda)\q  {\rm and}\q
  \{\beta_{1}(\lambda), \beta_{2}(\lambda), \ldots\}=\mathcal N^{(-)}_\lambda\cup {\sf S}^{(-)}(\Lambda).
 \]
 These sets are finite and, by  \eqref{eq:count1} and \eqref{eq:count2}, they consist  of  $n^{(+)}(\Lambda) + s^{(+)}(\Lambda)$ and $n^{(-)}(\Lambda) + s^{(-)}(\Lambda)$ points, respectively, whence
  \[
m_{\lambda}=  n^{(\pm)}(\Lambda) + s^{(\pm)}(\Lambda)
  \]
for both signs.  
\end{proof}

In the example in Fig.~2 we have
$n^{(\pm)}(\Lambda) = s^{(\pm)}(\Lambda)=1$, so that $m=2$.

Putting together Theorems~\ref{specuni:thm} and \ref{count:thm}, we obtain our final result.

\begin{theorem}\label{PWC:thm}
Suppose that $\omega$ satisfies Condition \ref{SM:cond}, and that 
an interval $\Lambda$ satisfies \eqref{eq:TH}.  Let the number  $m$ be defined in \eqref{count:eq}. 
Then the 
spectral representation of  
the operator $T$ restricted to the 
subspace $E(\Lambda){\mathbb H}^{2}$ 
is realized on the space 
$L^{2}(\Lambda; {\mathbb C}^m)$. 
In other words, the spectral multiplicity 
of the operator $T$ on the interval $\Lambda$ is finite, and it coincides with the 
number $m$.
\end{theorem}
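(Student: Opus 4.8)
This theorem is obtained by assembling the results already established; the proof amounts to checking that the hypotheses of Theorem~\ref{specuni:thm} are met on $\Lambda$. First I would note that Condition~\ref{om:cond} holds automatically: by Condition~\ref{SM:cond}(i) the symbol $\omega = \overline\omega$ lies in $L^\infty(\mathbb T)\subset L^1(\mathbb T)$ and $\gamma_1 = \textup{ess-inf}\,\omega > -\infty$, so $T = T(\omega)$ is a bounded self-adjoint operator to which the whole of Section~\ref{genT:sect} applies (in particular, by Corollary~\ref{specset} its spectrum fills $[\gamma_1,\gamma_2]$, so the requirement $\Lambda\subset(\gamma_1,\gamma_2)$ is meaningful).

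Next I would verify Condition~\ref{mult:cond} on $\Lambda$. Since $\Lambda$ satisfies \eqref{eq:TH}, Theorem~\ref{count:thm} applies and shows that for every $\lambda\in\Lambda$ --- hence a fortiori for a.e. $\lambda\in\Lambda$ --- the set $\Gamma(\lambda)$ is the union \eqref{gammaf:eq} of finitely many open arcs $(\alpha_j(\lambda),\beta_j(\lambda))$ with pairwise disjoint closures, the number $m = m(\Lambda)$ being independent of $\lambda$ and given by \eqref{count:eq}, i.e. $m(\Lambda) = n^{(\pm)}(\Lambda) + s^{(\pm)}(\Lambda)$ for either sign. This is precisely Condition~\ref{mult:cond} with that value of $m$.

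With Conditions~\ref{om:cond} and \ref{mult:cond} in force on $\Lambda$ with the finite number $m$, Theorem~\ref{specuni:thm} then produces a unitary operator $\Phi_\Lambda\colon E(\Lambda)\mathbb H^2 \to L^2(\Lambda;\mathbb C^m)$, defined through the eigenfunctions by \eqref{phiku:eq} and \eqref{philam:eq}, which satisfies $\Phi_\Lambda E(X) = \1_X\Phi_\Lambda$ for all Borel sets $X\subset\Lambda$ and therefore realizes the spectral representation of $TE(\Lambda)$ on $L^2(\Lambda;\mathbb C^m)$. Hence the spectrum of $T$ on $\Lambda$ has finite multiplicity equal to $m$, which is the assertion. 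There is no genuine obstacle here --- the argument is pure bookkeeping; the only point worth flagging is the harmless formal discrepancy that Theorem~\ref{count:thm} delivers the conclusion for all $\lambda\in\Lambda$, which is stronger than the ``a.e. $\lambda$'' demanded by Condition~\ref{mult:cond} and used in Theorem~\ref{specuni:thm}.
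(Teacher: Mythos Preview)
Your proposal is correct and matches the paper's approach exactly: the paper states explicitly that Theorem~\ref{PWC:thm} is obtained by ``putting together Theorems~\ref{specuni:thm} and \ref{count:thm}'', which is precisely what you do. Your explicit verification that Condition~\ref{SM:cond} implies Condition~\ref{om:cond} is a useful detail the paper leaves implicit.
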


  This result is crucial for our construction of scattering theory for piecewise continuous 
symbols in \cite{SY}.

 \end{document}